

\documentclass[final,1p,times]{elsarticle}


\usepackage{amssymb}
\usepackage{amsmath}
 \usepackage{amsthm}
  \usepackage{color}

\usepackage{subfigure}






\newtheorem{thm}{Theorem}[section]

\newtheorem{lem}[thm]{Lemma}

\newtheorem{rem}[thm]{Remark}

\newtheorem{asm}[thm]{Assumption}

\def\ba{{\mathbf{a}}}

\def\bE{{\bf E}}

\def\bW{{\bf W}}

\newcommand{\C}{\rm I\kern-.5emC}
\newcommand{\R}{\rm I\kern-.19emR}

\def\3bar{{|\hspace{-.02in}|\hspace{-.02in}|}}


\journal{...}

\begin{document}

\begin{frontmatter}

\title{Adaptive Flux-Only Least-Squares Finite Element Methods for Linear Transport Equations}

\author{Qunjie Liu} 
\ead{qjliu2-c@my.cityu.edu.hk}
\address{Department of Mathematics, 
City University of Hong Kong, Hong Kong SAR, China}

\author{Shun Zhang \corref{cor1}}
\ead{shun.zhang@cityu.edu.hk}
\address{Department of Mathematics, 
City University of Hong Kong, Hong Kong SAR, China}

\cortext[cor1]{Corresponding author}

\begin{keyword}
least-squares finite element method 
\sep linear transport equation
\sep error estimate
\sep discontinuous solution
\sep overshooting
\sep adaptive LSFEM


\end{keyword}

\begin{abstract}
In this paper, two flux-only least-squares finite element methods (LSFEM) for the linear hyperbolic transport problem are developed. The transport equation often has discontinuous solutions and discontinuous inflow boundary conditions, but the normal component of the flux across the mesh interfaces is continuous. In traditional LSFEMs, the continuous finite element space  is used to approximate the solution. This will cause unnecessary error around the discontinuity and serious overshooting. In \cite{LZ:18}, we reformulate the equation by introducing a new flux variable to separate the continuity requirements of the flux and the solution. Realizing that the Raviart-Thomas mixed element space has enough degrees of freedom to approximate both the flux and its divergence, we eliminate the solution from the system and get two flux-only formulations, and develop corresponding LSFEMs. The solution then is recovered by simple post-processing methods using its relation with the flux. These two versions of flux-only LSFEMs use less DOFs than the method we developed in \cite{LZ:18}.
	
Similar to the LSFEM developed in \cite{LZ:18}, both flux-only LSFEMs can handle discontinuous solutions better than the traditional continuous polynomial approximations. We show the existence, uniqueness, a priori and a posteriori error estimates of the proposed methods.  With adaptive mesh refinements driven by the least-squares a posteriori error estimators, the solution can be accurately approximated even when the mesh is not aligned with discontinuity. The overshooting phenomenon is very mild if a piecewise constant reconstruction of the solution is used. Extensive numerical tests are done to show the effectiveness of the methods developed in the paper.
\end{abstract}
\end{frontmatter}

\newcommand{\BX}{{\bf X}}
\newcommand{\cv}{{\cal V}}
\newcommand{\cW}{{\cal W}}
\newcommand{\co}{{\cal O}}

\renewcommand{\theequation}{\thesection.\arabic{equation}}
\def\@eqnnum{{\reset@font\rm (\theequation)}}




\def\a{\alpha}
\def\b{\beta}
\def\d{\delta}\def\D{\Delta}
\def\e{\epsilon}
\def\g{\gamma}\def\G{\Gamma}
\def\k{\kappa}
\def\lam{\lambda}\def\Lam{\Lambda}
\renewcommand\o{\omega}\renewcommand\O{\Omega}
\def\s{\sigma}\def\S{\Sigma}
\renewcommand\t{\theta}\def\vt{\vartheta}
\newcommand{\vphi}{\varphi}
\def\z{\zeta}

\newcommand{\tsigma}{\tilde{\s}}
\newcommand{\tbsigma}{\tilde{\bsigma}}
\def\te{\tilde{\e}}
\def\tu{\tilde{u}}

\newcommand{\bchi}{\mbox{\boldmath$\chi$}}
\newcommand{\bdelta}{\mbox{\boldmath$\delta$}}
\newcommand{\bepsilon}{\mbox{\boldmath$\epsilon$}}
\newcommand{\bfeta}{\mbox{\boldmath$\eta$}}
\newcommand{\bgamma}{\mbox{\boldmath$\gamma$}}
\newcommand{\bomega}{\mbox{\boldmath$\omega$}}
\newcommand{\bvphi}{\mbox{\boldmath$\varphi$}}
\newcommand{\bphi}{\mbox{\boldmath$\phi$}}
\newcommand{\bPhi}{\mbox{\boldmath$\Phi$}}
\newcommand{\bpsi}{\mbox{\boldmath$\psi$}}
\newcommand{\bPsi}{\mbox{\boldmath$\Psi$}}
\newcommand{\bsigma}{\mbox{\boldmath$\sigma$}}
\newcommand{\btau}{\mbox{\boldmath$\tau$}}
\newcommand{\bxi}{\mbox{\boldmath$\xi$}}
\newcommand{\brho}{\mbox{\boldmath$\rho$}}
\newcommand{\bbeta}{\mbox{\boldmath$\beta$}}
\newcommand{\bzeta}{\mbox{\boldmath$\zeta$}}

\def\bk{\boldsymbol{\kappa}}
\def\bmu{\boldsymbol\mu}
\def\bxi{\boldsymbol{\xi}}
\def\bz{\boldsymbol{\zeta}}

\def\ba{{\bf a}}
\def\bb{{\bf b}}
\def\bc{{\bf c}}
\def\be{{\bf e}}
\def\bff{{\bf f}}
\def\bg{{\bf g}}
\def\bn{{\bf n}}
\def\bp{{\bf p}}
\def\bq{{\bf q}}
\def\bs{{\bf s}}
\def\bt{{\bf t}}
\def\bu{{\bf u}}
\def\bv{{\bf v}}
\def\bw{{\bf w}}
\def\bx{{\bf x}}
\def\by{{\bf y}}
\def\bzz{{\bf z}}

\def\bD{{\bf D}}
\def\bE{{\bf E}}
\def\bF{{\bf F}}
\def\bH{{\bf H}}
\def\bJ{{\bf J}}
\def\bV{{\bf V}}
\def\bU{{\bf U}}
\def\bW{{\bf W}}
\def\bX{{\bf X}}
\def\bY{{\bf Y}}

\def\cA{{\cal A}}
\def\cC{{\cal C}}
\def\cD{{\cal D}}
\def\cE{{\cal E}}
\def\cF{{\cal F}}
\def\cG{{\cal G}}
\def\cI{{\cal I}}
\def\cJ{{\cal J}}
\def\cK{{\cal K}}
\def\cL{{\cal L}}
\def\cO{{\cal O}}
\def\cP{{\cal P}}
\def\cQ{{\cal Q}}
\def\cR{{\cal R}}
\def\cS{{\cal \Sigma}}
\def\cT{{\cal T}}
\def\cU{{\cal U}}
\def\cV{{\cal V}}

\def\scT{{_\cT}}
\def\sD{{_D}}
\def\sE{{_E}}
\def\sF{{_F}}
\def\sFz{{_{F_z}}}
\def\sK{{_K}}
\def\sI{{_I}}
\def\sb{{_b}}
\def\sN{{_N}}

\def\curl{{{\bf curl} \ }}
\def\rot{{\mbox{rot}\ }}
\def\BPI{{\bf \Pi}}

\def\cth{\cT_h}
\def\ctH{\cT_H}

\def\tJ{\tilde{\J}}

\def\hK{\widehat{K}}
\def\hx{\widehat{x}}
\def\hy{\widehat{y}}
\def\bhv{\widehat{\bv}}

\def\l{\ell}
\def\bl{\boldsymbol{\ell}}
\def\col{\colon}
\def\f12{\frac12}
\def\dfrac{\displaystyle\frac}
\def\dint{\displaystyle\int}
\def\nab{\nabla}
\def\p{\partial}
\def\sm{\setminus}
\def\dsum{\displaystyle\sum}
\newcommand{\pp}[2]{\frac{\partial {#1}}{\partial {#2}}}
\def\bzero{{\bf 0}}

\def\divv{\nab\cdot}
\def\divx{\nab_x\cdot}
\def\divtx{\nab_{t,x}\cdot}
\def\nabx{\nab_x}

\newcommand{\grad}{\nabla}
\newcommand{\curlt}{{\nabla \times}}
\newcommand{\gperp}{\nabla^{\perp}}
\newcommand{\gradt}{\nabla\cdot}

\def\forallqq{\quad\forall\,}
\def\aph{A^{1/2}}
\def\amh{A^{-1/2}}

\def\osc{{\rm osc \, }}

\def\Im{{\rm Im}}
\newcommand{\tr}{{\rm tr}}
\def\divvr{{\rm div}}
\def\curllr{{\rm curl}}
\def\curll{{\rm curl}}
\def\curl{{\bf curl}}
\newcommand{\bgrad}{{\bf grad}}
\newcommand\diam{\mathrm{diam\,}}
\renewcommand\Im{\mathrm{Im\,}}
\def\Span{\mbox{Span}}
\def\supp{\mbox{supp\,}}
\newcommand{\trace}{{\rm trace}}

\newcommand{\tri}{|\!|\!|}
\newcommand{\ljump}{\lbrack\!\lbrack}
\newcommand{\rjump}{\rbrack\!\rbrack}
\newcommand{\bdm}{\begin{displaymath}}
\newcommand{\edm}{\end{displaymath}}
\newcommand{\beq}{\begin{equation}}
\newcommand{\eeq}{\end{equation}}
\newcommand{\beqa}{\begin{eqnarray}}
\newcommand{\eeqa}{\end{eqnarray}}
\newcommand{\beqas}{\begin{eqnarray*}}
\newcommand{\eeqas}{\end{eqnarray*}}
\newcommand{\ul}{\underline}
\newcommand{\wh}{\widehat}
\newcommand{\la}{\langle}
\newcommand{\ra}{\rangle}

\newcommand{\Lt}{L^2(\Omega)}
\newcommand{\Lts}{L^2(\Omega)^2}
\newcommand{\Ltc}{L^2(\Omega)^3}
\newcommand{\Ho}{H^1(\Omega)}
\newcommand{\Hoh}{H^1(\wh{\Omega})}
\newcommand{\Hoi}{H^1(\Omega_i)}
\newcommand{\Hos}{H^1(\Omega)^2}
\newcommand{\Hoc}{H^1(\Omega)^3}
\newcommand{\Hoch}{H^1(\wh{\Omega})^3}
\newcommand{\Hoci}{H^1(\Omega_i)^3}
\newcommand{\Hoz}{H^1_0(\Omega)}
\newcommand{\Ht}{H^2(\Omega)}
\newcommand{\Hti}{H^2(\Omega_i)}
\newcommand{\Hts}{H^2(\Omega)^2}
\newcommand{\Htc}{H^2(\Omega)^3}
\newcommand{\Htz}{H^0(\Omega)}
\newcommand{\Hh}{H^{1/2}(\Gamma)}
\newcommand{\Hhi}{H^{1/2}(\Gamma_i)}
\newcommand{\Hmh}{H^{-1/2}(\Gamma)}
\newcommand{\Hdiv}{H(\divvr;\,\Omega)}
\newcommand{\Hdivh}{H(\divv;\,\wh \Omega)}
\newcommand{\hcurl}{H(\curl\,A;\,\Omega)}
\newcommand{\Hcurl}{H(\curll\,A;\,\Omega)}
\newcommand{\Hcrl}{H(\curll\,;\,\Omega)}
\newcommand{\hcrl}{H(\curl\,;\,\Omega)}
\newcommand{\Hcrlh}{H(\curll\,;\,\wh\Omega)}
\newcommand{\hcrlh}{H(\curl\,;\,\wh\Omega)}
\newcommand{\Wdiv}{\BW_0(\mbox{\divv}\,;\,\Omega)}
\newcommand{\Wcurl}{\BW_0(\mbox{\curl}\,A;\,\Omega)}
\newcommand{\WcrossV}{\BW \times V}

\section{Introduction}\label{intro}
\setcounter{equation}{0}
In this paper, we consider the following linear transport equation in the conservative form:
\begin{align}
 \label{transporteqn}
\gradt(\bbeta u)+\gamma u &= f \quad \mbox{in} \,\ \O, \\ \nonumber
                        u &= g \quad \mbox{on} \,\ \Gamma_{-},
\end{align}
with $\bbeta$ an advection field and $\Gamma_-$ the inflow boundary. Detailed descriptions of the equation can be found in Section 2.

It is crucial to realize that unlike the elliptic or parabolic equations whose solutions are generally smooth, hyperbolic equations commonly have discontinuous solutions. For the transport equation, the solution space is 
$$
W = \{ v \in L^2(\O): \gradt (\bbeta v) \in L^2(\O)\}.
$$
For the space $W$, there is no simple finite element subspace of it. The usual continuous finite element space requires too much continuity while the discontinuous finite element space lacks the continuity of the flux $\bbeta u$. To separate the continuity requirement,  we realize that for a true solution $u\in W$, the condition $\gradt(\bbeta u) \in L^2(\O)$ essentially means that
$$
u \in L^2(\O) \quad \mbox{and}\quad\bbeta u \in H(\divvr;\O).
$$ 
In \cite{LZ:18}, we propose new least-squares variational formulations with a flux reformulation. Introducing the flux $\bsigma = \bbeta u \in H(\divvr;\O)$, we have a first order system (with appropriate boundary conditions):
\beq \label{1stordersys}
\bsigma -\bbeta u =0 \quad \mbox{and}\quad \gradt \bsigma +\gamma u =f. 
\eeq
The space requirement of $u$ is reduced to $L^2(\O)$ only. In \cite{LZ:18}, we use $ H(\divvr;\O)$-conforming Raviart-Thomas finite element space $RT_k$ and discontinuous piecewise polynomial space $P_k$ to approximate $\bsigma\in H(\divvr;\O)$ and $u\in L^2(\O)$ separately. Notice that the space $RT_k$ has enough degrees of freedom to approximate both $\bsigma$ and the divergence of $\bsigma$ in the same order $k$, this suggests us that it is possible to remove the $u$-variable from the system and get a system with the flux only to reduce the degrees of freedom of the discrete system.

In this paper, two reformulations are suggested. In the first reformulation, under the assumption that $|\bbeta| \neq 0$, we use $\bsigma = \bbeta u$ to get $u=\bsigma \cdot \bbeta/{|\bbeta|^2}$, then a single equation for $\bsigma$ is obtained:
$$
\gradt \bsigma +\gamma   \dfrac{\bbeta \cdot \bsigma}{|\bbeta|^2}=f. 
$$
An extra orthogonal condition is also added to ensure the uniqueness of the flux $\bsigma$. 

The second reformulation uses $\gradt \bsigma + \gamma u =f$ to eliminate $u$ with the assumption $\gamma \neq 0$. Then we can have a single equation for $\bsigma$:
$$
\gamma \bsigma + \bbeta (\gradt\bsigma -f) = 0. 
$$
Based on these two reformulations with the flux variable only, two least-squares finite element methods are developed in the paper. A simple post-processing using either  $u=\bsigma \cdot \bbeta/{|\bbeta|^2}$ or $u = (f-\gradt \bsigma)/\gamma$ can get an approximation of the original variable $u$.

LSFEMs for the transport equations can be found in \cite{CJ:88,Jiang:98,BochevChoi:01,BC:01,DMMO:04,BG:09,BG:16}. In these papers, the $u$-only formulation is used and continuous finite element space is used to approximate it. Since continuous functions are used to approximate discontinuous solutions, it will introduce unnecessary error. The error indicator will always indicate unnecessary big errors for those elements on the discontinuity if an adaptive method is used. Compared with a $P_0$ discontinuous approximations for $u$ which is easily available for the methods in this paper, Gibbs phenomenon like spurious over-shooting is more intense near the discontinuity in these continuous approximations, see also discussions in \cite{Zhang:19} on overshooting with adaptive finite elements.

The nonconforming LSFEM in \cite{DHSW:12} and the similar method in \cite{MY:18} use discontinuous approximations with a penalty term to enforce the continuity of the normal component of the flux. The $H(\divvr)$ flux-only methods use natural finite element spaces without any penalty terms. In addition, these two methods have to use at least piecewise linear functions, which can lead to non-trivial overshooting, see discussions in \cite{Zhang:19}. The methods developed in this paper can simply recover the solution in a piecewise constant space to avoid overshooting.

The paper is organized as follows. Section 2 describes the model linear hyperbolic transport problem. Based on two flux-only reformulations, two least-squares variational problem are presented in Section 3. Corresponding LSFEMs are developed in Section 4, a priori and a posteriori error estimates are established. Section 5 provides numerical results for many test problems. In Section 6, we make some concluding remarks.

\section{Model Linear Hyperbolic Transport Equation}
\setcounter{equation}{0}

Let $\Omega$ be a bounded polyhedral domain in $\Re^d$ ($d= 2$ or $3$) with a Lipschitz boundary. Assume that 
\begin{align}
\gamma(x) \in L^{\infty}(\O) \quad  \mbox{and}\quad \bbeta(x)=(\beta_1,\cdots, \beta_d)^{T} \in [C^1(\overline{\O})]^d.
\end{align}
Define the inflow and outflow parts of $\p \O$ in the usual fashion:
\begin{align*}
&\Gamma_{-}=\{x\in\p \O:\bbeta(x)\cdot\bn_{\p\O}(x)<0\}=\mbox{inflow}, \\
&\Gamma_{+}=\{x\in\p\O:\bbeta(x)\cdot\bn_{\p\O}(x)>0\}=\mbox{outflow},
\end{align*}
where $\bn_{\p\O}(x)$ denotes the unit outward normal vector to $\p\O$ at $x\in\p\O$. In order to specify the boundary condition in (\ref{transporteqn}), define
$$
L^2(|\bbeta\cdot\bn|;\Gamma_-):= \{
v \mbox{ is measurable on } \p\O : \int_{\Gamma_-}|\bbeta\cdot\bn| v^2 < \infty\}.
$$

In this paper, we assume that the coefficients are nice enough to guarantee the existence and uniqueness of the solution.

\begin{asm} \label{asmbeta}
{\bf  (Assumption on the data $\bbeta$ and $\gamma$)}
Assume that for $g \in L^2(|\bbeta\cdot\bn|;\Gamma_-)$,
the linear transport equation (\ref{transporteqn}) has a unique solution
in $W$ for the given data $\bbeta$ and $\gamma$.
\end{asm}
Several known conditions that guarantee the existence and uniqueness of the solution are:
\begin{enumerate}[(i)]
\item There exists a positive $\gamma_0$, such that
$$
\gamma + \dfrac{1}{2}\gradt \bbeta \geq \gamma_0 > 0 \quad \mbox{in  } \Omega.
$$
\item   $\bbeta$ is a nonzero constant vector with $\gamma=0$. 
\end{enumerate}
The proof of the uniqueness and existence with the condition (i)  can be founded in \cite{DHSW:12} and Chapter 2 of \cite{DE:12}. For the case with the condition (ii) (and more general cases), the proof is based on the standard ODE theory, and can be founded in \cite{DHSW:12,DMMO:04}.

%
%
%

\begin{rem}
An equivalent non-conservative reformulation is
\begin{align}
\bbeta\cdot \nabla u+\mu u &= f \quad \mbox{in} \,\ \O, \\ \nonumber
                        u &= g \quad \mbox{on} \,\ \Gamma_{-},
\end{align}
with $\mu = \gamma + \gradt \bbeta$. We can apply the methods developed in this paper to the non-conservative version by  changing it to the conservative formulation.
\end{rem}

\section{Flux-Only Least-Squares Variational Problems}
\setcounter{equation}{0}

In this section, two flux only least-squares variational problems are introduced. The existence and uniqueness of the formulations are discussed.

\subsection{Reformulation with both flux $\bsigma$ and solution $u$}

In \cite{LZ:18}, a flux $\bsigma=\bbeta u$ is introduced to separate the continuity requirements, then the divergence of the flux is in $L^2(\O)$, $\gradt \bsigma =f - \gamma u \in L^2(\O)$. Thus the flux $\bsigma \in H(\divvr;\O)$. Since the inflow condition is essentially a condition of the flux, we write the condition $u = g$ on $\Gamma_{-}$ as
$$
\bsigma \cdot \bn = (\bbeta \cdot \bn) g, \quad \mbox{on} \,\ \Gamma_{-}.
$$
Thus we get the following system of $\bsigma$ and $u$:
\beq \label{1stordersys2}
\left\{
\begin{array}{rlll}
\bsigma -\bbeta u &=& 0 
		& \mbox{in  } \O, \\[2mm]
\gradt \bsigma + \gamma u &=& f     & \mbox{in  }  \O, \\[2mm]
\bsigma \cdot \bn  &=&  (\bbeta \cdot \bn) g  & \mbox{on  }  \Gamma_{-}.
\end{array}
\right.
\eeq
Define the following spaces:
\begin{eqnarray*}
H_{g,-}(\divvr;\O) &:=& \{ \btau \in H(\divvr;\O) : \btau\cdot\bn = (\bbeta \cdot \bn) g \mbox{  on  }\Gamma_-\}, \\
H_{0,-}(\divvr;\O) &:=&  \{ \btau \in H(\divvr;\O) : \btau\cdot\bn = 0 \mbox{  on  }\Gamma_-\}.
\end{eqnarray*}

\subsection{Flux-only least-squares variational problem: first reformulation}
In this reformulation, we assume:
\beq
	|\bbeta(x)|>0, \quad \forall \, x\in \O.
\eeq
From the equation $\bsigma =\bbeta u$, we have
$$
	u=\dfrac{\bbeta \cdot \bsigma}{|\bbeta|^2}.
$$
Then we obtain a single equation of $\bsigma$: 
$$
\gradt \bsigma +\gamma   \dfrac{\bbeta \cdot \bsigma}{|\bbeta|^2}=f. 
$$
Such $\bsigma$ is not unique, since a single scalar equation is used to compute a vector function $\bsigma$. For example, when $\gamma=0$, the only requirement is $\gradt \bsigma =f$. Thus some extra conditions are needed to ensure the uniqueness of $\bsigma$. Notice that since $\bsigma = \bbeta u$, we should also have
$$
 \bsigma \cdot \bbeta_{\bot}^{(i)}=  \bbeta u \cdot \bbeta_{\bot}^{(i)} =0,\ \forall i\in I: =\{1,\cdots,d-1\},
$$
where $\{\bbeta_{\bot}^{(i)}\}_{i=1}^{d-1}$ is an orthonormal family of vectors satisfying
$$
\bbeta(x) \cdot \bbeta_{\bot}^{(i)}(x)=0, \ \forall i \in I,
\quad \mbox{and} \quad  
\bbeta_{\bot}^{(i)}(x)\cdot\bbeta_{\bot}^{(j)}(x)=\delta_{ij}, \ \forall i,j\in I. 
$$
For $d=2$, we can simply choose $\bbeta_{\bot}(x)=(-\beta_2(x),\beta_1(x))/|\bbeta(x)|$.
For $d=3$, $\bbeta_{\bot}^{(1)}(x)$ and $\bbeta_{\bot}^{(2)}(x)$ can be obtained 
by first finding two linearly independent vector functions orthogonal to $\bbeta(x)$,
then performing a Gramm-Schmidt orthonormalization procedure.

Define
\beq
\tilde{\gamma} =   \gamma/|\bbeta|^2.
\eeq
We get the following linear system with respect to $\bsigma$:
\beq\label{transportequation}
\left\{
\begin{array}{rlll}
\gradt \bsigma + \tilde{\gamma} (\bbeta \cdot \bsigma)  &=& f 
		& \mbox{in  } \O, \\[2mm]
                                   \bsigma \cdot \bbeta_{\bot}^{(i)} &=&0     & \mbox{in  }  \O, \ i\in I, \\[2mm]
                                   \bsigma \cdot \bn  &=&  (\bbeta \cdot \bn) g  & \mbox{on  }  \Gamma_{-}.
\end{array}
\right.
\eeq
The least-squares minimization problem of \eqref{transportequation} is:
we seek $\bsigma \in H_{g,-}(\divvr;\O)$, such that
\beq \label{LS}
\cJ_1(\bsigma;f,g) = \inf_{\btau \in H_{g,-}(\divvr;\O) } \cJ_1(\btau;f,g),
\eeq
with the least-squares functional $\cJ_1$ defined as
\beq
\cJ_1(\btau;f,g) := 
\left \|\gradt \btau+ \tilde{\gamma} (\bbeta \cdot \btau)- f \right\|_0^2 
+ \sum_{i=1}^{d-1}\|\btau \cdot \bbeta_{\bot}^{(i)} \|_0^2, \quad \forall \, \btau \in H_{g,-}(\divvr;\O).
\eeq
Its corresponding Euler-Lagrange formulation is: find $\bsigma \in H_{g,-}(\divvr;\O)$, such that
\beq \label{LS_EL}
a_1(\bsigma,\btau) = f_1(\btau),	\quad \forall \, \btau \in H_{0,-}(\divvr;\O),
\eeq
where the bilinear form $a_1$ is defined as
$$
a_1(\btau,\brho) := \left(\gradt \btau + \tilde{\gamma} (\bbeta \cdot \btau), 
	\gradt \brho + \tilde{\gamma}\bbeta \cdot \brho \right) 
	+ \sum_{i=1}^{d-1}\left(\btau \cdot \bbeta_{\bot}^{(i)}  ,\brho \cdot \bbeta_{\bot}^{(i)}  \right), 
$$	
for all $\btau, \brho \in H(\divvr;\O)$, and the linear form $f_1$ is defined as
$$
	f_1(\btau) = (f, \gradt \btau + \tilde{\gamma} (\bbeta \cdot \btau) ), \quad \forall\ \btau  \in H(\divvr;\O).
$$

\begin{lem} \label{norm1}
Assuming that the coefficients $\bbeta$ and $\gamma$ satisfy Assumption \ref{asmbeta}, the following defines a norm for $\btau \in H_{0,-}(\divvr;\O)$:
\beq
\tri \btau \tri_1 :=  \left(\|\gradt \btau +\tilde{\gamma} (\bbeta \cdot \btau)\|_0^2 + \sum_{i=1}^{d-1}\| \btau \cdot \bbeta_{\bot}^{(i)}\|_0^2\right )^{1/2}. 
\eeq
\end{lem}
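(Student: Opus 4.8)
\emph{Proof plan.} The quantity $\tri \btau \tri_1$ is assembled from the $L^2(\O)$-norms of $\gradt\btau + \tilde{\gamma}(\bbeta\cdot\btau)$ and of the components $\btau\cdot\bbeta_\bot^{(i)}$, each of which is a \emph{linear} function of $\btau$. So the plan is first to dispose of absolute homogeneity and the triangle inequality by a one-line observation: both maps $\btau\mapsto \gradt\btau+\tilde{\gamma}(\bbeta\cdot\btau)$ and $\btau\mapsto\btau\cdot\bbeta_\bot^{(i)}$ carry $H(\divvr;\O)$ boundedly into $L^2(\O)$ — here one uses $\gamma\in L^{\infty}(\O)$, $\bbeta\in[C^1(\overline{\O})]^d$, and the fact that $|\bbeta|$ is bounded away from $0$ on the compact set $\overline{\O}$, so that $\tilde{\gamma}\bbeta$ and $\bbeta_\bot^{(i)}$ lie in $[L^{\infty}(\O)]^d$ — whence $\tri\cdot\tri_1$ is a finite seminorm and inherits both properties from $\|\cdot\|_0$.

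The substance is definiteness, and this is the argument I would carry out. Suppose $\btau\in H_{0,-}(\divvr;\O)$ with $\tri\btau\tri_1=0$. Then $\gradt\btau+\tilde{\gamma}(\bbeta\cdot\btau)=0$ in $L^2(\O)$ and $\btau\cdot\bbeta_\bot^{(i)}=0$ a.e.\ in $\O$ for every $i\in I$. Since $\{\bbeta/|\bbeta|,\bbeta_\bot^{(1)},\dots,\bbeta_\bot^{(d-1)}\}$ is, at a.e.\ point of $\O$, an orthonormal basis of $\Re^d$, the vanishing of all $d-1$ components $\btau\cdot\bbeta_\bot^{(i)}$ forces $\btau=w\,\bbeta$ pointwise, where $w:=(\bbeta\cdot\btau)/|\bbeta|^2\in L^2(\O)$. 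Substituting into the first identity and using $\tilde{\gamma}\,(\bbeta\cdot w\bbeta)=(\gamma/|\bbeta|^2)\,|\bbeta|^2\,w=\gamma w$ gives $\gradt(w\bbeta)+\gamma w=0$; as $\gamma w\in L^2(\O)$, this also shows $\gradt(w\bbeta)\in L^2(\O)$, i.e.\ $w\in W$. The membership $\btau\in H_{0,-}(\divvr;\O)$ means $\btau\cdot\bn=(\bbeta\cdot\bn)\,w=0$ on $\Gamma_-$, and since $\bbeta\cdot\bn<0$ there, $w$ has vanishing inflow trace. Hence $w$ solves \eqref{transporteqn} in $W$ with $f=0$ and $g=0$, so Assumption \ref{asmbeta} (uniqueness) yields $w=0$, and therefore $\btau=w\bbeta=0$.

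The step I expect to require real care — the main obstacle — is the reduction of the normal-trace condition $\btau\cdot\bn=0$ on $\Gamma_-$ to the statement that $w\in W$ has zero inflow trace, together with the justification that Assumption \ref{asmbeta} may legitimately be applied to $w$. This rests on the standard trace theory for the graph space $W$: functions in $W$ possess traces in $L^2(|\bbeta\cdot\bn|;\Gamma_-)$ (and on $\Gamma_+$), and for $\btau=w\bbeta$ the $H(\divvr;\O)$ normal trace equals $(\bbeta\cdot\bn)$ times that trace of $w$; once this identification is in hand, $w|_{\Gamma_-}=0$ in $L^2(|\bbeta\cdot\bn|;\Gamma_-)$ follows immediately because $\bbeta\cdot\bn$ does not vanish on $\Gamma_-$. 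The remaining ingredients — the pointwise orthonormal decomposition of $\btau$ and the algebraic identity $\tilde{\gamma}|\bbeta|^2=\gamma$ — are routine.
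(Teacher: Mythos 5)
Your proof is correct and follows essentially the same route as the paper's: vanishing of the $\bbeta_{\bot}^{(i)}$-components forces $\btau=\bbeta w$, substitution gives the homogeneous transport equation $\gradt(\bbeta w)+\gamma w=0$, the condition $\btau\cdot\bn=0$ on $\Gamma_-$ with $\bbeta\cdot\bn\neq 0$ gives zero inflow data, and uniqueness from Assumption \ref{asmbeta} yields $w=0$, hence $\btau=0$. You are simply more explicit than the paper about the formula $w=(\bbeta\cdot\btau)/|\bbeta|^2\in L^2(\O)$ and about identifying the $H(\divvr;\O)$ normal trace of $\btau=w\bbeta$ with $(\bbeta\cdot\bn)$ times the graph-space trace of $w$, details the paper leaves implicit.
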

\begin{proof}
The linearity and the triangle inequality are obvious for the definition $\tri \btau\tri_1$.

When $\tri \btau\tri_1 = 0$, we have
$$
\gradt \btau +\tilde{\gamma} (\bbeta \cdot \btau) = 0 
\quad\mbox{and}\quad \btau \cdot \bbeta_{\bot}^{(i)}=0, \ \forall i\in I.
$$
Note that $\bbeta \cdot \bbeta_{\bot}^{(i)}=0, \ \forall i\in I$, then $\bbeta(x)$ and $\btau(x)$ are linear dependent for all $x\in \O$. Thus there must exist a function $v(x)$ such that $\btau(x)=\bbeta (x)v(x)$ for all $x\in \O$. Substituting this into the first equation yields $\gradt(\bbeta v)+\gamma v=0$. 
On the inflow boundary, $\btau\cdot\bn = 0$ on $\Gamma_-$, thus we get $\bbeta\cdot\bn v = 0$. Since $\bbeta\cdot\bn \neq 0$ on $\Gamma_-$, $v$ is identically $0$ on $\Gamma_-$. 
By Assumption \ref{asmbeta}, the equation has a unique solution, and the solution has to be  $v=0$. So we have $\btau = \bbeta v = 0$. Thus the norm $\tri \cdot\tri_1$ is well defined.
\end{proof}

\begin{rem}
It is also clear that
$$
\tri \btau\tri_{1,K} := 
\left(\|\gradt \btau +\tilde{\gamma} (\bbeta \cdot \btau)\|_{0,K}^2 
+ \sum_{i=1}^{d-1}\| \btau \cdot \bbeta_{\bot}^{(i)}\|_{0,K}^2
\right )^{1/2}
$$
is a semi-norm on an element $K\in\cT$.
\end{rem}

%
\begin{thm} \label{EU1}
The least-squares problem (\ref{LS}) has a unique solution $\bsigma \in H_{g,-}(\divvr;\O)$ with the assumption $g\in L^2(|\bbeta\cdot\bn|;\Gamma_-)$ and the data $\bbeta$ and $\gamma$ satisfying  Assumption \ref{asmbeta}.
\end{thm}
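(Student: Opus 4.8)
The plan is to apply the Lax--Milgram theorem to the Euler--Lagrange formulation \eqref{LS_EL}, for which I need boundedness and coercivity of the bilinear form $a_1$ on the Hilbert space $H_{0,-}(\divvr;\O)$, together with well-posedness of the affine problem over $H_{g,-}(\divvr;\O)$. First I would observe that since $g \in L^2(|\bbeta\cdot\bn|;\Gamma_-)$, the space $H_{g,-}(\divvr;\O)$ is a nonempty affine subspace of $H(\divvr;\O)$ (one can lift the boundary data to an element of $H(\divvr;\O)$), so it suffices to write $\bsigma = \bsigma_0 + \bsigma_g$ with $\bsigma_g$ a fixed lift and seek $\bsigma_0 \in H_{0,-}(\divvr;\O)$ solving $a_1(\bsigma_0,\btau) = f_1(\btau) - a_1(\bsigma_g,\btau)$ for all $\btau \in H_{0,-}(\divvr;\O)$; the right-hand side is a bounded linear functional on $H_{0,-}(\divvr;\O)$ because $f \in L^2(\O)$, $\tilde\gamma, \bbeta$ are bounded, and $\bsigma_g$ is fixed.

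Next, boundedness of $a_1$ is routine: each term is an $L^2$ inner product of quantities controlled by $\|\btau\|_{H(\divvr;\O)}$ using $\|\tilde\gamma\|_{L^\infty}$, $\|\bbeta\|_{L^\infty}$, and the unit-norm bound on the $\bbeta_\bot^{(i)}$, so $|a_1(\btau,\brho)| \le C\,\|\btau\|_{H(\divvr;\O)}\|\brho\|_{H(\divvr;\O)}$. For coercivity, note that $a_1(\btau,\btau) = \tri\btau\tri_1^2$, so what is needed is the norm equivalence
\[
\tri\btau\tri_1^2 \ge c\,\|\btau\|_{H(\divvr;\O)}^2, \qquad \forall\, \btau \in H_{0,-}(\divvr;\O),
\]
for some $c>0$. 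This is where Lemma~\ref{norm1} enters: it already establishes that $\tri\cdot\tri_1$ is a norm on $H_{0,-}(\divvr;\O)$, and the upper bound $\tri\btau\tri_1 \le C\|\btau\|_{H(\divvr;\O)}$ follows from the boundedness estimate. The reverse inequality — the genuine coercivity bound — is the crux of the argument.

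The main obstacle is therefore proving that $\tri\cdot\tri_1$ controls the full $H(\divvr;\O)$ norm. I would argue by contradiction and compactness: suppose no such $c$ exists, so there is a sequence $\btau_n \in H_{0,-}(\divvr;\O)$ with $\|\btau_n\|_{H(\divvr;\O)} = 1$ and $\tri\btau_n\tri_1 \to 0$. One first reconstructs the scalar $u_n$ from the flux: the conditions $\btau_n \cdot \bbeta_\bot^{(i)} \to 0$ in $L^2$ together with $|\bbeta| > 0$ let me write $\btau_n = \bbeta\, u_n + \bfeta_n$ where $u_n = (\bbeta\cdot\btau_n)/|\bbeta|^2 \in L^2(\O)$ and $\bfeta_n \to 0$ in $L^2(\O)^d$; then $\gradt(\bbeta u_n) + \gamma u_n = [\gradt\btau_n + \tilde\gamma(\bbeta\cdot\btau_n)] - \gradt\bfeta_n \to 0$ holds in an appropriate (dual/negative-norm) sense, or more carefully one uses a stability/a priori bound for the transport operator furnished by Assumption~\ref{asmbeta} to get $\|u_n\|_{L^2(\O)} \le C\big(\tri\btau_n\tri_1 + \|\bfeta_n\|\big) \to 0$, whence $\|\btau_n\|_{L^2(\O)^d} \to 0$ and then $\|\gradt\btau_n\|_{L^2(\O)} \to 0$ as well, contradicting $\|\btau_n\|_{H(\divvr;\O)} = 1$. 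The delicate point is making rigorous the claim that the transport equation, under Assumption~\ref{asmbeta}, yields a quantitative a priori estimate $\|v\|_{L^2} \le C\|\gradt(\bbeta v) + \gamma v\|_{(\text{suitable norm})}$ with zero inflow data — this is the closed-range / bounded-inverse property of the transport operator, which is exactly the content behind the known sufficient conditions (i) and (ii) cited after Assumption~\ref{asmbeta}, and I would invoke it as the operator-theoretic restatement of that assumption. Once coercivity is in hand, Lax--Milgram gives a unique $\bsigma_0$, hence a unique $\bsigma = \bsigma_0 + \bsigma_g \in H_{g,-}(\divvr;\O)$ solving \eqref{LS_EL}; since $\cJ_1$ is convex and \eqref{LS_EL} is precisely its first-order optimality condition, this $\bsigma$ is the unique minimizer of \eqref{LS}.
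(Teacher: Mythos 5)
Your argument stands or falls with the coercivity bound $\tri\btau\tri_1^2 \ge c\,\|\btau\|_{H(\divvr;\O)}^2$ on $H_{0,-}(\divvr;\O)$, and that is exactly what is not available here. The paper states explicitly (remark at the end of Section 3) that a norm equivalence with the $H(\divvr)$ norm cannot be obtained and that an indirect proof is used; moreover the remark in Section 4 and Examples 5.3--5.4 argue that such a coercivity is in fact false: if it held, $\|u-u_h\|_0$ and $\|\bsigma-\bsigma_h\|_{H(\divvr)}$ would have to converge at the same first-order rate as the least-squares norm, which is not observed. Your compactness argument also breaks at specific steps. Writing $\btau_n=\bbeta u_n+\bfeta_n$ gives $\gradt(\bbeta u_n)+\gamma u_n=\bigl[\gradt\btau_n+\tilde{\gamma}(\bbeta\cdot\btau_n)\bigr]-\gradt\bfeta_n$, so an $L^2$ stability bound for the transport operator would yield $\|u_n\|_0\le C\bigl(\tri\btau_n\tri_1+\|\gradt\bfeta_n\|_0\bigr)$, not $\|\bfeta_n\|_0$, and $\|\gradt\bfeta_n\|_0$ is not controlled by $\tri\btau_n\tri_1$. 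The negative-norm substitute $\|v\|_0\le C\|\gradt(\bbeta v)+\gamma v\|_{H^{-1}}$ is not a consequence of Assumption \ref{asmbeta} (which is purely qualitative existence and uniqueness, not a quantitative bounded-inverse estimate in those norms) and is generally false for transport operators, which have no smoothing; there is also no compact embedding of $H(\divvr;\O)$ into $L^2(\O)^d$ to fall back on. Finally, $u_n$ does not even have vanishing inflow data: $\btau_n\cdot\bn=0$ on $\Gamma_-$ only gives $(\bbeta\cdot\bn)u_n=-\bfeta_n\cdot\bn$, which need not vanish.

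The paper's proof avoids all of this and is direct. Existence: Assumption \ref{asmbeta} provides the exact solution $\tilde{u}\in W$ of \eqref{transporteqn}; setting $\tilde{\bsigma}=\bbeta\tilde{u}$ one checks $\tilde{\bsigma}\in H_{g,-}(\divvr;\O)$ and $\cJ_1(\tilde{\bsigma};f,g)=0$, so $\tilde{\bsigma}$ minimizes the nonnegative functional \eqref{LS}. Uniqueness: the difference $E$ of two solutions lies in $H_{0,-}(\divvr;\O)$ and satisfies $\tri E\tri_1^2=a_1(E,E)=0$, and Lemma \ref{norm1} (the only ingredient you share with the paper) says $\tri\cdot\tri_1$ is a norm, hence $E=0$. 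You should replace the Lax--Milgram route by this zero-residual construction; as written, your proof has a genuine gap that cannot be repaired within your framework.
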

\begin{proof}
For the existence, with the assumption of $g\in L^2(|\bbeta\cdot\bn|;\Gamma_-)$ and Assumption \ref{asmbeta}, there exists a $\tilde{u}\in W\subset L^2({\O})$, such that $\tilde{u} = g$ on $\Gamma_-$ satisfying  (\ref{transporteqn}). Let
$\tilde{\bsigma} = \bbeta \tilde{u}$, then
$$
\tilde{u} = \dfrac{\bbeta \cdot \tilde{\bsigma}}{|\bbeta|^2}, \quad
\|\tilde{\bsigma}\|_0 \leq \|\bbeta\|_{\infty} \|\tilde{u}\|_0, \quad
\|\gradt \tilde{\bsigma} \|_0 = \|f-\gamma \tilde{u}\|_0 \leq \|f\|_0 + \|\gamma\|_{\infty}\|\tilde{u}\|_{0}.
$$
Also, on the inflow boundary, $\tilde{\bsigma}\cdot\bn = \bbeta\cdot\bn \tilde{u}= (\bbeta\cdot\bn)g$. Thus $\tilde{\bsigma} \in H_{g,-}(\divvr;\O)$ and satisfies (\ref{transportequation}). So $\tilde{\bsigma}\in H_{g,-}(\divvr;\O)$ is the minimizer such that $\cJ_1(\tilde{\bsigma};f,g) =0$.

To prove the uniqueness, let $\bsigma_1$ and $\bsigma_2$ in $H_{g,-}(\divvr;\O)$ be two solutions of (\ref{LS}) or (\ref{LS_EL}). Let $E = \bsigma_1 -\bsigma_2 \in H_{0,-}(\divvr;\O)$, then $\tri E \tri_1 ^2 = a_1(E,E) =0$. Since in Lemma \ref{norm1}, we already showed that $\tri \cdot \tri_1$ is a norm, the uniqueness is proved.
\end{proof}

\subsection{Flux-only least-squares variational problem: second reformulation}
If we assume that $\gamma\neq 0$, or more precisely, we assume that 
$$
\gamma (x) \neq 0, \quad \forall x\in \O \quad a.e..
$$ 
We can use $u = \dfrac{1}{\gamma} (f-\gradt\bsigma)$ to eliminate $u$ and  get
\beq
\bsigma + \dfrac{\bbeta}{\gamma} (\gradt\bsigma -f) = 0,
\eeq
by $\bsigma = \bbeta u$.
Notice this is a vector equation, thus this reformulation does not need the extra orthogonal condition to guarantee the uniqueness of $\bsigma$. We can also reformulate it as 
\beq
\gamma \bsigma + \bbeta (\gradt\bsigma -f) = 0. 
\eeq
We get the following linear system with respect to $\bsigma$ for $\gamma\neq 0$:
\beq\label{transportequation2}
\left\{
\begin{array}{rlll}
\gamma \bsigma + \bbeta (\gradt\bsigma -f)   &=& 0
		& \mbox{in  } \O, \\[2mm]
 \bsigma \cdot \bn  &=&  (\bbeta \cdot \bn) g  & \mbox{on  }  \Gamma_{-}.
\end{array}
\right.
\eeq
The least-squares variational problem of \eqref{transportequation2} is:
we seek $\bsigma \in H_{g,-}(\divvr;\O)$, such that
\beq \label{LS2}
\cJ_2(\bsigma;f,g) = \inf_{\btau \in H_{g,-}(\divvr;\O) } \cJ_2(\btau;f,g),
\eeq
with the least-squares functional $\cJ_2$ defined as
\beq
\cJ_2(\btau;f,g) := 
\left \| \gamma \btau + \bbeta (\gradt\btau -f)  \right\|_0^2, \quad \forall\ \btau \in H_{g,-}(\divvr;\O).
\eeq
Its corresponding Euler-Lagrange formulation is: find $\bsigma \in H_{g,-}(\divvr;\O)$, such that
\beq \label{LS_EL}
a_2(\bsigma,\btau) = f_2(\btau), 	\quad \forall \, \btau \in H_{0,-}(\divvr;\O),
\eeq
where the bilinear form $a_2$ is defined as
\begin{eqnarray*}
a_2(\btau,\brho) &:= &\left(\gamma \btau + \bbeta \gradt\btau, 
	\gamma \brho + \bbeta \gradt\brho \right) \\
	&=&(\gamma^2 \btau, \brho) +(|\bbeta|^2 \gradt\btau,\gradt \brho) +(\gamma \bbeta\cdot\btau, \gradt \brho)
	+(\gamma \bbeta\cdot\brho, \gradt \btau),
\end{eqnarray*}
for all $\btau, \brho \in H(\divvr;\O)$,
and
$$
f_2(\btau) := 
 (\bbeta f,  \bbeta \gradt \btau +\gamma \btau ) = (f,  |\bbeta|^2 \gradt \btau +\gamma \bbeta\cdot \btau ).
$$
\begin{rem}
Note that $f_1(\btau)$ and $f_2(\btau)$ are essentially the same if we weight the system with $|\bbeta|^2$ (or simply normalize $\bbeta$ to make $|\bbeta| =1$), but the $a_1$ and $a_2$ are not the same when $\gamma \neq 0$ since 
$$
(\bbeta \cdot \btau, \bbeta \cdot \brho) \neq (|\bbeta|^2 \btau, \brho). 
$$
Thus these two reformulations are indeed different.
\end{rem}

\begin{lem} \label{norm2}
Assuming that the coefficients $\bbeta$ and $\gamma \neq 0$ satisfy the assumption of the data ensuring the existence and uniqueness of original equation (\ref{transporteqn}), the following defines a norm for $\btau \in H_{0,-}(\divvr;\O)$:
\beq
\tri \btau \tri_2 := 
\|\gamma \btau + \bbeta \gradt \btau\|_0. 
\eeq
\end{lem}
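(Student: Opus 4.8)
The strategy mirrors the proof of Lemma~\ref{norm1}. Since $\tri\btau\tri_2=\|\gamma\btau+\bbeta\,\gradt\btau\|_0$ is the $L^2(\O)$-norm of an expression depending linearly on $\btau$, the absolute homogeneity $\tri c\,\btau\tri_2=|c|\,\tri\btau\tri_2$ and the triangle inequality are inherited directly from $\|\cdot\|_0$, and $\tri\btau\tri_2\ge 0$ is trivial. Hence the only point requiring argument is definiteness: I must show that $\tri\btau\tri_2=0$ implies $\btau=0$ for $\btau\in H_{0,-}(\divvr;\O)$.

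Assume $\tri\btau\tri_2=0$, that is $\gamma\btau+\bbeta\,\gradt\btau=0$ a.e.\ in $\O$. Since $\gradt\btau$ is scalar, the vector $-\bbeta\,\gradt\btau$ is pointwise parallel to $\bbeta$, and as $\gamma\neq0$ a.e.\ this forces $\btau$ to be pointwise parallel to $\bbeta$ as well. I make this precise by setting $v:=-(\gradt\btau)/\gamma$, which is well defined a.e.; then $\btau=\bbeta\,v$ a.e.\ (in particular $\btau$ vanishes wherever $\bbeta$ does, so no positivity assumption on $|\bbeta|$ is needed here). Because $\btau\in H(\divvr;\O)$ we have $\gradt\btau\in L^2(\O)$, and using the standing hypothesis that $1/\gamma$ is essentially bounded --- the same property that makes the post-processed $u=(f-\gradt\bsigma)/\gamma$ well defined in $L^2(\O)$ --- we get $v\in L^2(\O)$. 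Moreover $\gradt(\bbeta v)=\gradt\btau=-\gamma v\in L^2(\O)$, so $v\in W$ and, rearranging, $v$ solves the homogeneous transport problem, i.e.\ (\ref{transporteqn}) with $f=0$.

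It remains to read off the inflow condition. From $\btau\in H_{0,-}(\divvr;\O)$ we have $\btau\cdot\bn=0$ on $\Gamma_-$ in the normal-trace sense, hence $(\bbeta\cdot\bn)\,v=0$ on $\Gamma_-$; since $\bbeta\cdot\bn<0$ on $\Gamma_-$ by definition of the inflow boundary, $v=0$ on $\Gamma_-$. Thus $v\in W$ solves $\gradt(\bbeta v)+\gamma v=0$ with zero inflow datum, and Assumption~\ref{asmbeta} forces $v\equiv0$, whence $\btau=\bbeta v=0$. This proves definiteness and hence that $\tri\cdot\tri_2$ is a norm on $H_{0,-}(\divvr;\O)$.

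The routine norm axioms are immediate; the crux is the middle step, namely converting the single vector identity $\gamma\btau+\bbeta\,\gradt\btau=0$ into a scalar transport equation for $v=-(\gradt\btau)/\gamma$ and verifying $v\in W$ with a legitimate (trace-sense) homogeneous inflow condition. Within that, the only genuinely delicate point is the membership $v\in L^2(\O)$, which is where the essential boundedness of $1/\gamma$ enters; apart from that the computation is a direct analogue of the $\tri\cdot\tri_1$ case, with the vector character of the second reformulation doing the job that the auxiliary orthogonality conditions $\btau\cdot\bbeta_{\bot}^{(i)}=0$ did in Lemma~\ref{norm1}.
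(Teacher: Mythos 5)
Your proposal is correct and follows essentially the same route as the paper: set $v=-(\gradt\btau)/\gamma$, deduce $\btau=\bbeta v$ and $\gradt(\bbeta v)+\gamma v=0$, read off $v=0$ on $\Gamma_-$ from the vanishing normal trace, and invoke Assumption~\ref{asmbeta} to get $v\equiv0$, hence $\btau=0$. Your explicit remark that $v\in L^2(\O)$ requires $1/\gamma$ to be essentially bounded is a careful touch the paper leaves implicit, but it does not change the argument.
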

\begin{proof}
We only need to check that $\btau =0$ when $\tri \btau\tri_2 = 0$.

If $\tri \btau\tri_2 = 0$, we have
$$
\gamma \btau + \bbeta \gradt \btau = 0.
$$
Choose $v = -\frac{1}{\gamma} \gradt\btau \in L^2(\O)$, then we have $\gamma \btau - \bbeta \gamma v =0$. Since $\gamma \neq 0$, $\btau = \bbeta v$. Thus we have a pair $(\btau,v)$ satisfying
$$
\gradt\btau + \gamma v =0 \quad\mbox{and}\quad \btau = \bbeta v.
$$
This yields $\gradt(\bbeta v)+\gamma v=0$. Similar to Lemma \ref{norm1},  we have $v = 0$ on $\Gamma_-$, and thus  $v=0$ in $\O$ and $\btau = \bbeta v = 0$. The lemma is proved.
\end{proof}

\begin{thm} \label{EU2}
The least-squares problem (\ref{LS2}) has a unique solution $\bsigma \in H_{g,-}(\divvr;\O)$ with
the assumption $g\in L^2(|\bbeta\cdot\bn|;\Gamma_-)$ 
and the coefficients $\bbeta$ and $\gamma \neq 0$ satisfying Assumption \ref{asmbeta}.
\end{thm}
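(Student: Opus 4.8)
The plan is to mirror, essentially verbatim, the two-step argument used for Theorem~\ref{EU1}: produce an explicit zero-residual minimizer of (\ref{LS2}) from the solution of the original transport equation, and then deduce uniqueness from the fact that the bilinear form $a_2$ restricted to the test space coincides with $\tri\cdot\tri_2^2$, which Lemma~\ref{norm2} has already identified as (the square of) a norm on $H_{0,-}(\divvr;\O)$.

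\emph{Existence.} Under Assumption~\ref{asmbeta} and $g\in L^2(|\bbeta\cdot\bn|;\Gamma_-)$, there is a unique $\tilde u\in W$ solving (\ref{transporteqn}) with $\tilde u=g$ on $\Gamma_-$. I would set $\tilde\bsigma:=\bbeta\tilde u$. Since $\tilde u\in L^2(\O)$ and $\bbeta\in[C^1(\overline\O)]^d$, we get $\tilde\bsigma\in L^2(\O)^d$ with $\|\tilde\bsigma\|_0\le\|\bbeta\|_\infty\|\tilde u\|_0$; and, directly from the PDE, $\gradt\tilde\bsigma=f-\gamma\tilde u\in L^2(\O)$ with $\|\gradt\tilde\bsigma\|_0\le\|f\|_0+\|\gamma\|_\infty\|\tilde u\|_0$, so $\tilde\bsigma\in H(\divvr;\O)$. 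On $\Gamma_-$, $\tilde\bsigma\cdot\bn=(\bbeta\cdot\bn)\tilde u=(\bbeta\cdot\bn)g$, hence $\tilde\bsigma\in H_{g,-}(\divvr;\O)$. Plugging into the residual,
\[
\gamma\tilde\bsigma+\bbeta(\gradt\tilde\bsigma-f)=\gamma\bbeta\tilde u+\bbeta\big((f-\gamma\tilde u)-f\big)=0,
\]
so $\cJ_2(\tilde\bsigma;f,g)=0$ and $\tilde\bsigma$ is a (value-zero) minimizer of (\ref{LS2}).

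\emph{Uniqueness.} I would take two solutions $\bsigma_1,\bsigma_2\in H_{g,-}(\divvr;\O)$ of (\ref{LS2}); both satisfy the Euler--Lagrange equation, and their difference $E:=\bsigma_1-\bsigma_2$ lies in the linear space $H_{0,-}(\divvr;\O)$ and solves the homogeneous problem, so $\tri E\tri_2^2=a_2(E,E)=0$. Since $\tri\cdot\tri_2$ is a norm on $H_{0,-}(\divvr;\O)$ by Lemma~\ref{norm2}, we conclude $E=0$, i.e. $\bsigma_1=\bsigma_2$.

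The argument is routine given Lemma~\ref{norm2}; the only points needing a little care are (a) the identification $\gradt\tilde\bsigma=f-\gamma\tilde u\in L^2(\O)$, which is exactly what legitimizes $\tilde\bsigma\in H(\divvr;\O)$ and is the continuity-separation idea behind the flux reformulation, and (b) the standard equivalence between minimizing the convex quadratic $\cJ_2$ over $H_{g,-}(\divvr;\O)$ and solving the Euler--Lagrange problem (\ref{LS_EL}), which relies on $a_2$ being a symmetric, nonnegative, bounded bilinear form on $H(\divvr;\O)\times H(\divvr;\O)$; boundedness is immediate from $\gamma\in L^\infty(\O)$, $\bbeta\in[C^1(\overline\O)]^d$ and the definition of the $H(\divvr;\O)$-norm. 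I do not expect any genuine obstacle beyond what was already dispatched for Theorem~\ref{EU1}.
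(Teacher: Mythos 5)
Your proposal is correct and follows essentially the same route as the paper: existence by constructing $\tilde\bsigma=\bbeta\tilde u$ from the solution $\tilde u$ of (\ref{transporteqn}), verifying $\tilde\bsigma\in H_{g,-}(\divvr;\O)$ and that the residual vanishes so $\cJ_2(\tilde\bsigma;f,g)=0$, and uniqueness by noting the difference of two minimizers lies in $H_{0,-}(\divvr;\O)$ and has zero $\tri\cdot\tri_2$-norm, which Lemma~\ref{norm2} shows forces it to vanish. Your added remarks on boundedness of $a_2$ and the equivalence of the minimization and Euler--Lagrange formulations only make explicit what the paper leaves implicit.
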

\begin{proof}
Similar to the proof of Theorem \ref{EU1},
there exists a $\tilde{u}\in W\subset L^2({\O})$, such that $\tilde{u} = g$ on $\Gamma_-$ satisfying  (\ref{transporteqn}). 
Let $\tilde{\bsigma} = \bbeta \tilde{u}$, then $\gradt\tilde{\bsigma} +\gamma \tilde{u} =f$,
thus $\gamma \tilde{\bsigma} = \bbeta (f-\gradt\tilde{\bsigma})$.
Similar to the proof of Theorem \ref{EU1}, $\tilde{\bsigma} \in H_{g,-}(\divvr;\O)$.
So 
$\tilde{\bsigma}$ is the minimizer such that
$
\cJ_2(\tilde{\bsigma};f,g) =0.
$
The uniqueness is  due to that $\tri \cdot \tri_2$ is a norm.
\end{proof}

\begin{rem}
In traditional least-squares methods, a norm equivalence is often seeked to prove to the existence and uniqueness. As we will see, it is not possible in our case. Thus an indirect proof is used. Another example of such least-squares method is its application in non-divergence equation, see \cite{QZ:19}.
\end{rem}

\section{Flux-only Least-Squares Finite Element Methods}
\setcounter{equation}{0}
In this section, least-squares finite element methods based on the flux-only least-squares variational problems are developed. A priori and a posteriori error estimates are derived.

Let $\cT = \{K\}$ be a triangulation of $\O$ using simplicial elements. The mesh $\cT$ is assumed to be regular. We denote the set of edges/faces of the triangulation $\cT$ on inflow boundary $\Gamma_-$ by $\cE_{-}$. For an element $K\in \cT$ and an integer $k\geq 0$, let $P_k(K)$ be the space of polynomials with degrees less than or equal to $k$. The space $P_k(F)$ is defined similarly on an edge/face $F$. Define the finite element spaces $RT_k$ and $P_k$ as follows:
$$
RT_k  :=\{\btau \in H(\divvr; \O) \colon
			\btau|_K \in P_k(K)^d +\bx P_k(K),\,\, \forall \, K\in \cT\},
$$
and
$$			
P_k :=	\{v \in L^2(\O) \colon
			v|_K \in P_k(K),\,\, \forall \, K\in \cT\}.	
$$

\begin{asm}
({\bf Assumption on the boundary data})
For simplicity, we assume $(\bbeta\cdot\bn)g$ on $\Gamma_-$ can be approximated exactly by the trace of $RT_k$ space on $\Gamma_-$, i.e., $g|_F\in P_k(F)$, for all faces/edges $F\in \cE_{-}$. 
\end{asm}
Note that this assumption still allows a discontinuous boundary condition, but it does require that the boundary mesh is aligned with the discontinuity. For an arbitrary $g$, we need to first interpolate or project $(\bbeta\cdot\bn)g$ to the piecewise polynomial space.

Define
$$
RT_{k,g,-}:=\{\btau \in RT_k \colon
			\btau\cdot\bn = (\bbeta\cdot\bn)g \mbox{  on  } \Gamma_-\}.
$$

\subsection{Interpolations and their properties}
In order to derive  a priori error estimates, we introduce some interpolations and their properties. Note that all properties here are local with respect to the element $K$.

For  $s>0$, denote by $I^{rt}_k: \Hdiv \cap [H^s(\O)]^d \mapsto RT_k$
the standard $RT$ interpolation operator \cite{BBF:13}. It satisfies
the following approximation property: for $\btau \in H^{s}(K)^d$, $s>0$,
\beq \label{rti}
  \|\btau - I^{rt}_k \btau\|_{0,K}
  \leq C h_K^{\min\{s,k+1\}} |\btau|_{{\min\{s,k+1\}},K}, \quad\forall\,\, K\in \cT.
\eeq
 In addition, another approximation property holds: for $\btau \in H^{s}(K)$
and $\gradt\btau \in H^s(K)$, $s>0$,
\beq \label{rti2}
  \|\gradt(\btau - I^{rt}_k \btau)\|_{0,K}
  \leq C h_K^{\min\{s,k+1\}} |\gradt\btau|_{{\min\{s,k+1\}},K}, \quad\forall\,\, K\in \cT.
\eeq
The discussion of the above two properties can be found in \cite{LZ:18}.


\subsection{Least-squares finite element methods}

\noindent ({\bf Flux-only LSFEM Problems})
We seek $\bsigma_h \in RT_{k,g,-}$,
such that
\beq \label{LSFEM}
\cJ_i(\bsigma_h;f,g) = \inf_{\btau \in RT_{k,g,-} } \cJ_i(\btau;f,g),  \quad i= 1 \mbox{ or } 2.
\eeq
Or equivalently, find $\bsigma_h \in RT_{k,g,-}$, such that
\beq \label{LSFEM_EL}
a_i(\bsigma_h;\btau) = f_i(\btau), \quad \forall\, \btau \in RT_{k,0,-}, \quad i= 1 \mbox{ or } 2.
\eeq
We call LSFEM1 and LSFEM2 for least-squares finite element methods with $i = 1$ and $2$ separately. Note that we need the assumption $|\bbeta|>0$ for LSFEM1, and the assumption $\gamma \neq 0$ is needed for LSFEM2.

\subsection{Post-processing for $u_h$}
Once the numerical flux $\bsigma_h$ is known, since we have two equations to relate $u$ and $\bsigma$, there are two simple ways to recover a discrete approximation $u_h$:
\beq \label{pp_u1}
	\mbox{The first recovery} \quad 
u_h= \frac{\bsigma_h\cdot\bbeta}{|\bbeta|^2}, \quad \mbox{assuming  } |\bbeta| >0.
\eeq
\beq \label{pp_u2}
	\mbox{The second recovery} \quad 
	u_h= \frac{1}{\gamma} (f-\gradt\bsigma_h), \quad \mbox{assuming  }  \gamma \neq 0.
\eeq
Note that both post-processing procedures can be used for both LSFEMs as long as the assumption of the coefficients holds. 

We can also use a simple element-wise $L^2$-projection to get an approximation $u_h \in P_k$ if needed. To ensure the overshoot effect is mild, we should project $u_h$ onto $P_0$ as suggested in \cite{Zhang:19}.

\subsection{A priori error estimates}
The following best approximation property is simple and straight.
\begin{thm} \label{thm_bestapp}
(Cea's lemma type of result)
Let $\bsigma$ be the solution of least-squares variational problems \eqref{LS} or \eqref{LS2}, 
and $\bsigma_h$ be the solution of the corresponding LSFEM \eqref{LSFEM} satisfying the corresponding condition on $\bbeta$ and $\gamma$ and the assumption on the boundary data, the following best approximation result holds:
\beq
\tri \bsigma-\bsigma_h\tri_i \leq \inf _{\btau_h \in RT_{k,g,-} } \tri \bsigma-\btau_h\tri_i, \quad i=1 \mbox{ or }2.
\eeq
\end{thm}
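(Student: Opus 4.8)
The plan is to follow the standard least-squares Cea's lemma argument, which rests entirely on the Galerkin orthogonality built into the Euler-Lagrange formulations. The key observation is that the bilinear form $a_i$ is symmetric and, by Lemma \ref{norm1} (for $i=1$) or Lemma \ref{norm2} (for $i=2$), induces the norm $\tri\cdot\tri_i$ on $H_{0,-}(\divvr;\O)$, i.e. $a_i(E,E) = \tri E\tri_i^2$ for $E \in H_{0,-}(\divvr;\O)$.

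First I would subtract the discrete equation \eqref{LSFEM_EL} from the continuous equation \eqref{LS_EL}: since $RT_{k,0,-} \subset H_{0,-}(\divvr;\O)$, both $\bsigma$ and $\bsigma_h$ satisfy the same equation tested against any $\btau_h \in RT_{k,0,-}$, giving the Galerkin orthogonality $a_i(\bsigma - \bsigma_h, \btau_h) = 0$ for all $\btau_h \in RT_{k,0,-}$. Next, fix an arbitrary $\btau_h \in RT_{k,g,-}$ and write $\bsigma - \bsigma_h = (\bsigma - \btau_h) + (\btau_h - \bsigma_h)$; note $\btau_h - \bsigma_h \in RT_{k,0,-}$ because both have the same normal trace $(\bbeta\cdot\bn)g$ on $\Gamma_-$. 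Then I would expand
\[
\tri\bsigma-\bsigma_h\tri_i^2 = a_i(\bsigma-\bsigma_h,\,\bsigma-\bsigma_h) = a_i(\bsigma-\bsigma_h,\,\bsigma-\btau_h) + a_i(\bsigma-\bsigma_h,\,\btau_h-\bsigma_h),
\]
and the last term vanishes by Galerkin orthogonality. Applying the Cauchy-Schwarz inequality for the symmetric positive semidefinite form $a_i$ (valid since $a_i(\btau,\btau) = \tri\btau\tri_i^2 \geq 0$ on $H_{0,-}(\divvr;\O)$, and $\bsigma - \bsigma_h,\ \bsigma-\btau_h$ lie in this space) gives $\tri\bsigma-\bsigma_h\tri_i^2 \leq \tri\bsigma-\bsigma_h\tri_i\,\tri\bsigma-\btau_h\tri_i$. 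Dividing by $\tri\bsigma-\bsigma_h\tri_i$ (trivial if it is zero) yields $\tri\bsigma-\bsigma_h\tri_i \leq \tri\bsigma-\btau_h\tri_i$, and taking the infimum over $\btau_h \in RT_{k,g,-}$ completes the proof.

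There is essentially no serious obstacle here; the only point requiring a little care is the bookkeeping on the affine subspaces — verifying that $\btau_h - \bsigma_h$ and $\bsigma - \bsigma_h$ genuinely belong to the linear space $H_{0,-}(\divvr;\O)$ so that Galerkin orthogonality and the Cauchy-Schwarz bound for the (merely semidefinite, but on this subspace norm-inducing) form $a_i$ apply. One should also note that the argument is uniform in $i \in \{1,2\}$ provided the corresponding hypothesis on $\bbeta$ or $\gamma$ holds, so the two cases need not be treated separately beyond citing the appropriate norm lemma.
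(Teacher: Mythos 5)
Your proof is correct and follows essentially the same route as the paper: Galerkin orthogonality from the Euler--Lagrange equations, the identity $a_i(E,E)=\tri E\tri_i^2$, and Cauchy--Schwarz for the semidefinite form $a_i$. Your extra care with the affine-space bookkeeping (that $\btau_h-\bsigma_h\in RT_{k,0,-}$ when $\btau_h\in RT_{k,g,-}$) is a welcome clarification of a point the paper's proof glosses over, but it is not a different argument.
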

\begin{proof}
For an arbitrary $\btau_h \in RT_{k,0,-} $, the following error equation holds:
$$
a_i(\bsigma-\bsigma_h, \btau_h) =0, \quad \forall \btau_h \in RT_{k,0,-}.
$$
From the definition of the norm $\tri \cdot \tri_i$, the error equation, and the Cauchy-Schwarz inequality, we have
\begin{align*}
\tri\bsigma-\bsigma_h\tri_i^2 = a_i(\bsigma-\bsigma_h, \bsigma-\bsigma_h)
 = a_i(\bsigma-\bsigma_h, \bsigma-\btau_h)
 \leq \tri\bsigma-\bsigma_h\tri_i \tri\bsigma-\btau_h\tri_i.
\end{align*}
Thus $\tri\bsigma-\bsigma_h\tri_i \leq  \tri\bsigma-\btau_h\tri_i$.  Since $\btau_h$ is chosen arbitrarily, the theorem is proved.
\end{proof}

Define the following piecewise function spaces on the triangulation $\cT$,
 \begin{align*}
H^{s}(\cT)&=\{v\in L^2(\O):v|_K\in H^{s_K}(K) \,\ \forall K\in\cT \}, \\
H^s(\divvr; \cT)&=\{\btau \in (L^2(\O))^d: \btau|_K\in (H^{s_K}(K))^d, \gradt\btau|_K\in H^{s_K}(K)\,\ \forall K\in\cT \},
\end{align*}
with $s$ a piecewisely defined function, $s|_K=s_K>0$.

\begin{thm} \label{thm_LSFEM_apriori}
Assume the exact solution $\bsigma\in  H^s(\divvr; \cT)$, for $s>0$ defined piecewisely, and $\bsigma_h$ is the solution of the corresponding LSFEM \eqref{LSFEM} satisfying the corresponding condition on $\bbeta$ and $\gamma$ and the assumption on the boundary data, there exists a constant $C>0$ independent of the mesh size $h$, such that
\begin{align}\label{ine_apriori}
\tri \bsigma-\bsigma_h \tri_i
 \leq C
 \sum_{K\in \cT}h^{od_K}_K \left(\|\bsigma\|_{od_K,K}+\|\gradt\bsigma\|_{od_K,K}
 \right), \quad i=1 \mbox{ or } 2,
\end{align}
where $od_K = \min(k+1,s_K)$.
\end{thm}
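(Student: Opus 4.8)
The strategy is to reduce the estimate to the approximation power of the Raviart--Thomas interpolant via the Cea-type bound of Theorem~\ref{thm_bestapp}. By that theorem it is enough to produce one $\btau_h\in RT_{k,g,-}$ whose error $\tri\bsigma-\btau_h\tri_i$ is controlled by the right-hand side of \eqref{ine_apriori}, and the natural candidate is $\btau_h=I^{rt}_k\bsigma$. Since $\bsigma\in H^s(\divvr;\cT)$ with $s>0$, the interpolant $I^{rt}_k\bsigma$ is well defined and the local estimates \eqref{rti} and \eqref{rti2} hold on each $K$ with exponent $od_K=\min\{s_K,k+1\}$. Moreover $I^{rt}_k\bsigma\in RT_{k,g,-}$: the face degrees of freedom of $I^{rt}_k$ reproduce the $L^2(F)$-projection of the normal trace onto $P_k(F)$, and by the assumption on the boundary data $(\bbeta\cdot\bn)g|_F\in P_k(F)$ for every $F\in\cE_-$, hence $(I^{rt}_k\bsigma)\cdot\bn=\bsigma\cdot\bn=(\bbeta\cdot\bn)g$ on $\Gamma_-$. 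In particular $\be:=\bsigma-I^{rt}_k\bsigma\in H_{0,-}(\divvr;\O)$.

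Next I would estimate $\tri\be\tri_i$ by the $L^2$-errors $\|\be\|_0$ and $\|\gradt\be\|_0$. Applying the triangle inequality inside the definitions of $\tri\cdot\tri_1$ and $\tri\cdot\tri_2$ and using $|\bbeta_{\bot}^{(i)}|\equiv1$ together with the boundedness of the coefficient factors --- $\|\tilde{\gamma}\,\bbeta\|_\infty<\infty$ for $i=1$ (finite because $|\bbeta|$ is bounded away from $0$ and $\gamma\in L^\infty$) and $\|\gamma\|_\infty,\|\bbeta\|_\infty<\infty$ for $i=2$ --- one obtains
\[
\tri\be\tri_i\ \le\ C\bigl(\|\be\|_0+\|\gradt\be\|_0\bigr),\qquad i=1,2,
\]
with $C$ depending only on the data. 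I would then localise: $\|\be\|_0^2=\sum_{K\in\cT}\|\be\|_{0,K}^2$ and $\|\gradt\be\|_0^2=\sum_{K\in\cT}\|\gradt\be\|_{0,K}^2$, and since $(\sum_K a_K^2)^{1/2}\le\sum_K a_K$ for $a_K\ge0$, the elementwise estimates \eqref{rti} and \eqref{rti2} give
\[
\|\be\|_0+\|\gradt\be\|_0\ \le\ C\sum_{K\in\cT}h_K^{od_K}\bigl(|\bsigma|_{od_K,K}+|\gradt\bsigma|_{od_K,K}\bigr)\ \le\ C\sum_{K\in\cT}h_K^{od_K}\bigl(\|\bsigma\|_{od_K,K}+\|\gradt\bsigma\|_{od_K,K}\bigr).
\]
Chaining these displays with Theorem~\ref{thm_bestapp} yields \eqref{ine_apriori}.

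The triangle-inequality bookkeeping and the elementwise summation are routine. The delicate points are two. First, the admissibility $I^{rt}_k\bsigma\in RT_{k,g,-}$ rests entirely on the boundary-data assumption that $(\bbeta\cdot\bn)g$ belongs to the Raviart--Thomas normal-trace space; without it one would have to introduce a discrete lifting of the boundary defect and bound the extra contribution, at the price of an additive data-approximation term. Second, one must ensure the coefficient weights in $\tri\cdot\tri_i$ are genuinely in $L^\infty$: for LSFEM1 this needs $|\bbeta|$ bounded below on $\overline{\O}$ so that $\tilde{\gamma}=\gamma/|\bbeta|^2$ is bounded, and for LSFEM2 it needs $\gamma,\bbeta\in L^\infty$, which are exactly the standing hypotheses of the two methods.
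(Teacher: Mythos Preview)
Your proof is correct and follows essentially the same route as the paper: invoke the Cea-type bound of Theorem~\ref{thm_bestapp}, bound the least-squares norm by $\|\be\|_0+\|\gradt\be\|_0$ via the triangle inequality and the $L^\infty$ bounds on the coefficients, and then apply the Raviart--Thomas interpolation estimates \eqref{rti}--\eqref{rti2} elementwise with $\btau_h=I^{rt}_k\bsigma$. You are in fact a bit more careful than the paper in explicitly checking that $I^{rt}_k\bsigma\in RT_{k,g,-}$ via the boundary-data assumption.
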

\begin{proof}
By the definitions of $\tri\cdot\tri_i$ and the triangle inequality, it is easy to see that
\begin{eqnarray} \label{local1}
\tri \btau \tri_{1,K} \leq  \|\gradt \btau\|_{0,K} + (\|\gamma\|_{\infty,K}+d-1) \|\btau\|_{0,K},\\
\label{local2}
\mbox{and}\quad
\tri \btau \tri_{2,K} \leq  \| |\bbeta| \|_{\infty,K} \|\gradt \btau\|_{0,K} + \|\gamma\|_{\infty,K} \|\btau\|_{0,K}.
\end{eqnarray}
Then the inequality (\ref{ine_apriori}) is a direct consequence of Theorem \ref{thm_bestapp} and the approximation properties \eqref{rti} and \eqref{rti2}. 
\end{proof}

\begin{thm}
For the recovered $u_h$ by the first post-processing method \eqref{pp_u1} with $|\bbeta| >0$, we have
\begin{align} \label{erroru1}
\|u-u_h\|_0 \leq \|\bsigma-\bsigma_h\|_0. 
\end{align}
For the recovered $u_h$ by the second post-processing method \eqref{pp_u2} with $\gamma \neq 0$, 
we have
\begin{align} \label{erroru2}
\|u-u_h\|_0 \leq \dfrac{1}{\gamma}\|\gradt (\bsigma-\bsigma_h)\|_0.
\end{align}
\end{thm}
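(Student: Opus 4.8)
The plan is to exploit the fact that both the exact solution $u$ and the post-processed approximation $u_h$ are obtained by applying the \emph{same} pointwise algebraic operator to $\bsigma$ and $\bsigma_h$ respectively, so that the error $u-u_h$ is just that operator applied to $\bsigma-\bsigma_h$. Both estimates then follow from a pointwise bound and integration over $\O$, using the exact relations established in Section 3.

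For the first recovery I would start from the identities $u=(\bbeta\cdot\bsigma)/|\bbeta|^2$ (valid for the exact solution because $\bsigma=\bbeta u$) and $u_h=(\bbeta\cdot\bsigma_h)/|\bbeta|^2$ by definition \eqref{pp_u1}. Subtracting gives $u-u_h=\bbeta\cdot(\bsigma-\bsigma_h)/|\bbeta|^2$ a.e. in $\O$. The pointwise Cauchy--Schwarz inequality $|\bbeta\cdot(\bsigma-\bsigma_h)|\le |\bbeta|\,|\bsigma-\bsigma_h|$ then yields $|u-u_h|\le |\bsigma-\bsigma_h|/|\bbeta|$ a.e.; squaring and integrating gives \eqref{erroru1} (with $1/|\bbeta|$ read as $\|1/|\bbeta|\|_{\infty}$, so the clean form displayed corresponds to the normalization $|\bbeta|\equiv 1$). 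Optionally, I can sharpen the pointwise step by observing that since $\bsigma=\bbeta u$ is parallel to $\bbeta$, writing $\bsigma_h=u_h\bbeta+\bsigma_h^{\perp}$ with $\bbeta\cdot\bsigma_h^{\perp}=0$ gives the orthogonal decomposition $\bsigma-\bsigma_h=(u-u_h)\bbeta-\bsigma_h^{\perp}$, so that $|\bsigma-\bsigma_h|^2=|u-u_h|^2|\bbeta|^2+|\bsigma_h^{\perp}|^2\ge |u-u_h|^2|\bbeta|^2$ — a cleaner route to the same bound.

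For the second recovery I would use that the exact solution satisfies $\gradt\bsigma+\gamma u=f$, hence $u=(f-\gradt\bsigma)/\gamma$, together with the definition $u_h=(f-\gradt\bsigma_h)/\gamma$ from \eqref{pp_u2}. Subtracting, the $f$-terms cancel and $u-u_h=-\gradt(\bsigma-\bsigma_h)/\gamma$ a.e.; taking absolute values, squaring, and integrating gives \eqref{erroru2} (with $1/\gamma$ read as $\|1/\gamma\|_{\infty}$ when $\gamma$ is not constant). Combined with Theorem \ref{thm_LSFEM_apriori} and \eqref{local1}--\eqref{local2}, this immediately produces an $L^2(\O)$ convergence rate for $u_h$.

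There is essentially no serious obstacle: once the two exact relations $u=(\bbeta\cdot\bsigma)/|\bbeta|^2$ and $u=(f-\gradt\bsigma)/\gamma$ are in hand, both bounds are one-line pointwise estimates followed by integration. The only point requiring a little care is the dependence on the coefficients — strictly speaking the displayed inequalities carry an extra factor $\|1/|\bbeta|\|_{L^\infty(\O)}$, respectively $\|1/\gamma\|_{L^\infty(\O)}$, and are stated in clean form under the implicit normalization $|\bbeta|\equiv 1$ (resp.\ $\gamma$ constant); this should be noted.
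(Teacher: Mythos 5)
Your proof is correct and follows essentially the same route as the paper's: subtract the exact identities $u=\bbeta\cdot\bsigma/|\bbeta|^2$ (resp.\ $u=(f-\gradt\bsigma)/\gamma$) from the definitions \eqref{pp_u1} and \eqref{pp_u2}, bound pointwise by Cauchy--Schwarz, and integrate. Your remark that the displayed inequalities implicitly carry factors $\|1/|\bbeta|\|_{\infty}$ and $\|1/\gamma\|_{\infty}$ (clean only under $|\bbeta|\geq 1$, e.g.\ normalization, and constant $\gamma$) is a fair reading of the same one-line argument the paper gives, and does not change its substance.
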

\begin{proof}
For the recovered $u_h$ by \eqref{pp_u1} with $|\bbeta| >0$, we have
\begin{align*}
	\|u-u_h\|_0 & = \|(\bbeta\cdot \bsigma-\bbeta\cdot \bsigma_h)/{|\bbeta|^2}\|_0	\leq \|\bsigma-\bsigma_h\|_0.
\end{align*}
For the recovered $u_h$ by \eqref{pp_u2} with $\gamma \neq 0$, \eqref{erroru2} is obvious.
\end{proof}

\begin{rem}
For both $\tri \cdot \tri_i$ norms, we have the following simple upper bound w.r.t. the standard $H(\divvr)$-norm:
$$
\tri \btau \tri_i \leq C \|\btau\|_{H(\divvr)}, \quad \forall \btau \in H_{0,-}(\divvr;\O),
$$
with $C$ depending on $\bbeta$ and $\gamma$ only.

On the other hand, our numerical test will also disprove the possibility of a coercivity w.r.t. the standard $H(\divvr)$-norm:
$$
\tri \btau \tri_i \geq C \|\btau\|_{H(\divvr;\O)}, \quad \forall \btau \in H_{0,-}(\divvr;\O),
$$
or a weak discrete version with an $h$-independent $C>0$,
$$
\tri \btau \tri_i \geq C \|\btau\|_{H(\divvr;\O)}, \quad \forall \btau \in RT_{k,0,-}.
$$
Because if one of such coercivity results holds, we will have
$$
\|u-u_h\|_0  \leq C \|\bsigma -\bsigma_h\|_{H(\divvr)} \leq C \tri \bsigma -\bsigma_h \tri_i.
$$
For a uniform mesh and an $RT_0$ approximation of $\bsigma_h$, we should have
$$
\|u-u_h\|_0 \leq C \|\bsigma -\bsigma_h\|_{H(\divvr)} \leq  C h,
$$
for piecewise smooth solutions with a discontinuity aligned mesh or a global smooth solution with a Peterson's mesh, which is not the case in our numerical examples 5.2, 5.3, and 5.4. 
\end{rem}

\subsection{A posteriori error estimation}
The least-squares functionals can be used to define the following fully computable a posteriori  local indicators and global error estimators:
\begin{eqnarray*}
\eta_{1,K} &=&
\left(\|\gradt \bsigma_h+\tilde{\gamma}(\bbeta \cdot \bsigma_h)- f\|_{0}^2 + \sum_{i=1}^{d-1}\|\bsigma_h \cdot \bbeta_{\bot}^{(i)} \|_{0}^2\right)^{1/2}, \quad \forall\, K\in \cT,\\[2mm]
\eta_{2,K} &=& \|\gamma \bsigma_h + \bbeta (\gradt\bsigma_h -f) \|_{0,K}, \quad \forall\, K\in \cT,
\end{eqnarray*}
and
$$
\eta_{i} := \left( \sum_{K\in\cT}\eta_{i,K}^2 \right)^{1/2} = J_{i}(\bsigma_h;f,g)^{1/2}, \quad i= 1 \mbox{ or }2.
$$

\begin{thm}
The a posteriori error estimator $\eta_i$ is exact with respect to the least-squares norm $\tri \cdot\tri_i$:
$$
\eta_i = \tri \bsigma-\bsigma_h \tri_i \quad \mbox{and} \quad \eta_{i,K} = \tri \bsigma-\bsigma_h \tri_{i,K}.
$$
The following local efficiency bound is also true with $C>0$ independent of the mesh size $h$:
$$
	\eta_{i,K} \leq C \|\bsigma-\bsigma_h\|_{H(\divvr;K)}, \quad \forall K\in\cT.
$$
\end{thm}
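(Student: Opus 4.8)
The plan is to use the defining feature of least-squares methods: the functional $\cJ_i(\btau;f,g)$ is literally the squared residual of the reformulated first-order system, and the exact solution annihilates it. I would first prove the elementwise exactness identity $\eta_{i,K}=\tri\bsigma-\bsigma_h\tri_{i,K}$ and then sum. From the existence part of Theorem \ref{EU1} (resp.\ Theorem \ref{EU2}), the exact solution $\bsigma$ satisfies the first-order system \eqref{transportequation} (resp.\ \eqref{transportequation2}) pointwise, so every residual term evaluated at $\bsigma$ is identically zero. For LSFEM1 this gives $\gradt\bsigma+\tilde{\gamma}(\bbeta\cdot\bsigma)-f=0$ and $\bsigma\cdot\bbeta_{\bot}^{(i)}=0$; subtracting these from the residuals of $\bsigma_h$ and using linearity of $\gradt$ and of the pointwise algebraic operations,
\[
\gradt\bsigma_h+\tilde{\gamma}(\bbeta\cdot\bsigma_h)-f=\gradt(\bsigma_h-\bsigma)+\tilde{\gamma}\,\bbeta\cdot(\bsigma_h-\bsigma),\qquad \bsigma_h\cdot\bbeta_{\bot}^{(i)}=(\bsigma_h-\bsigma)\cdot\bbeta_{\bot}^{(i)}.
\]
Squaring, taking $L^2(K)$ norms, and adding, the right-hand side is exactly $\tri\bsigma-\bsigma_h\tri_{1,K}^2$ in the sense of the seminorm defined in the remark following Lemma \ref{norm1}, so $\eta_{1,K}=\tri\bsigma-\bsigma_h\tri_{1,K}$. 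The LSFEM2 case is identical: since $\gamma\bsigma+\bbeta(\gradt\bsigma-f)=0$, one gets $\gamma\bsigma_h+\bbeta(\gradt\bsigma_h-f)=\gamma(\bsigma_h-\bsigma)+\bbeta\gradt(\bsigma_h-\bsigma)$, whose $L^2(K)$ norm is $\tri\bsigma-\bsigma_h\tri_{2,K}$. Summing over $K\in\cT$ and using additivity of the squared (semi)norms gives $\eta_i=\tri\bsigma-\bsigma_h\tri_i$, which also coincides with $\cJ_i(\bsigma_h;f,g)^{1/2}$ by the definition of $\eta_i$.

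For the local efficiency bound I would simply invoke the elementwise upper bounds \eqref{local1} and \eqref{local2} already established in the proof of Theorem \ref{thm_LSFEM_apriori}, applied to $\btau=\bsigma-\bsigma_h$. For $i=1$ these give
\[
\tri\bsigma-\bsigma_h\tri_{1,K}\le \|\gradt(\bsigma-\bsigma_h)\|_{0,K}+(\|\gamma\|_{\infty,K}+d-1)\|\bsigma-\bsigma_h\|_{0,K}\le C\,\|\bsigma-\bsigma_h\|_{H(\divvr;K)},
\]
and for $i=2$ the analogous estimate with $\| |\bbeta| \|_{\infty,K}$ and $\|\gamma\|_{\infty,K}$, where $C$ depends only on $\bbeta$, $\gamma$, and $d$, hence is independent of the mesh size $h$. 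Combining with the exactness identity $\eta_{i,K}=\tri\bsigma-\bsigma_h\tri_{i,K}$ from the first part yields $\eta_{i,K}\le C\|\bsigma-\bsigma_h\|_{H(\divvr;K)}$ for all $K\in\cT$.

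I do not expect a genuine obstacle: this is the standard built-in a posteriori estimator property of least-squares finite element methods, and the only point needing care is that the residual of the exact solution vanishes termwise, which is precisely what the existence arguments of Theorems \ref{EU1} and \ref{EU2} provide. The one subtlety worth flagging is that reliability and efficiency hold here with respect to the problem-dependent norm $\tri\cdot\tri_i$, not the full $H(\divvr)$-norm: the reverse bound $\|\bsigma-\bsigma_h\|_{H(\divvr;K)}\le C\,\eta_{i,K}$ is not asserted, and, as the preceding remark explains, it cannot hold with an $h$-independent constant, so no $H(\divvr)$ lower bound should be attempted.
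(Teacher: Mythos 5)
Your proposal is correct and follows essentially the same route as the paper: the exactness identity is obtained by substituting $f=\gradt\bsigma+\tilde{\gamma}(\bbeta\cdot\bsigma)$ (resp. $\gamma\bsigma+\bbeta(\gradt\bsigma-f)=0$) into the residuals of $\bsigma_h$, and the local efficiency bound is deduced from the elementwise bounds \eqref{local1} and \eqref{local2} applied to $\btau=\bsigma-\bsigma_h$. The only (harmless) difference is that you argue elementwise first and then sum, whereas the paper states the global identity and notes the local case is identical.
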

\begin{proof}
For LSFEM1, using the facts $f = \gradt \bsigma+\tilde{\gamma}(\bbeta \cdot \bsigma)$ and $\bsigma \cdot \bbeta_{\bot}^{(i)}=0, \forall i \in I$, for the exact solution $\bsigma$, we get
\begin{eqnarray*}
\eta_1^2 &=&
\|\gradt \bsigma_h+\tilde{\gamma} (\bbeta \cdot \bsigma_h)- f\|_{0}^2 + \sum_{i=1}^{d-1}\|\bsigma_h \cdot \bbeta_{\bot}^{(i)} \|_{0}^2\\
&=& \|\gradt (\bsigma-\bsigma_h)+\tilde{\gamma}\bbeta \cdot (\bsigma-\bsigma_h)\|_{0}^2 + \sum_{i=1}^{d-1}\|(\bsigma-\bsigma_h) \cdot \bbeta_{\bot}^{(i)} \|_{0}^2\\
&=&\tri \bsigma-\bsigma_h \tri_1^2.
\end{eqnarray*}
The proofs of LSFEM2 and the local exactness are identical.

With local bounds \eqref{local1} and \eqref{local2}, 
the local efficiency bound for the standard $H(\divvr)$ norm can be easily proved.
\end{proof}

\begin{rem}
Due to the fact that the least-squares functional norm is not equivalent to the standard $H(\divvr)$ norm, it is
impossible to get the corresponding reliability result w.r.t. the $H(\divvr)$ norm.
\end{rem}

\section{Computational Examples}
In all our numerical examples, the lowest order element $RT_0$ is used to approximate the flux $\bsigma$. In the adaptive mesh refinement algorithm,  the D\"ofler's bulk marking strategy with $\theta =0.5$ is used and  the algorithm is stopped when the total number of DOFs reaches $10^6$. All refinements are based on the longest edge bisection algorithm. For all the numerical examples with domain $(0,1)^2$, the mesh shown in Fig. \ref{initialmesh} is used as an initial mesh.
\begin{figure}[!ht]
    \centering
   \begin{minipage}[!hbp]{0.45\linewidth}
        \includegraphics[width=0.99\textwidth,angle=0]{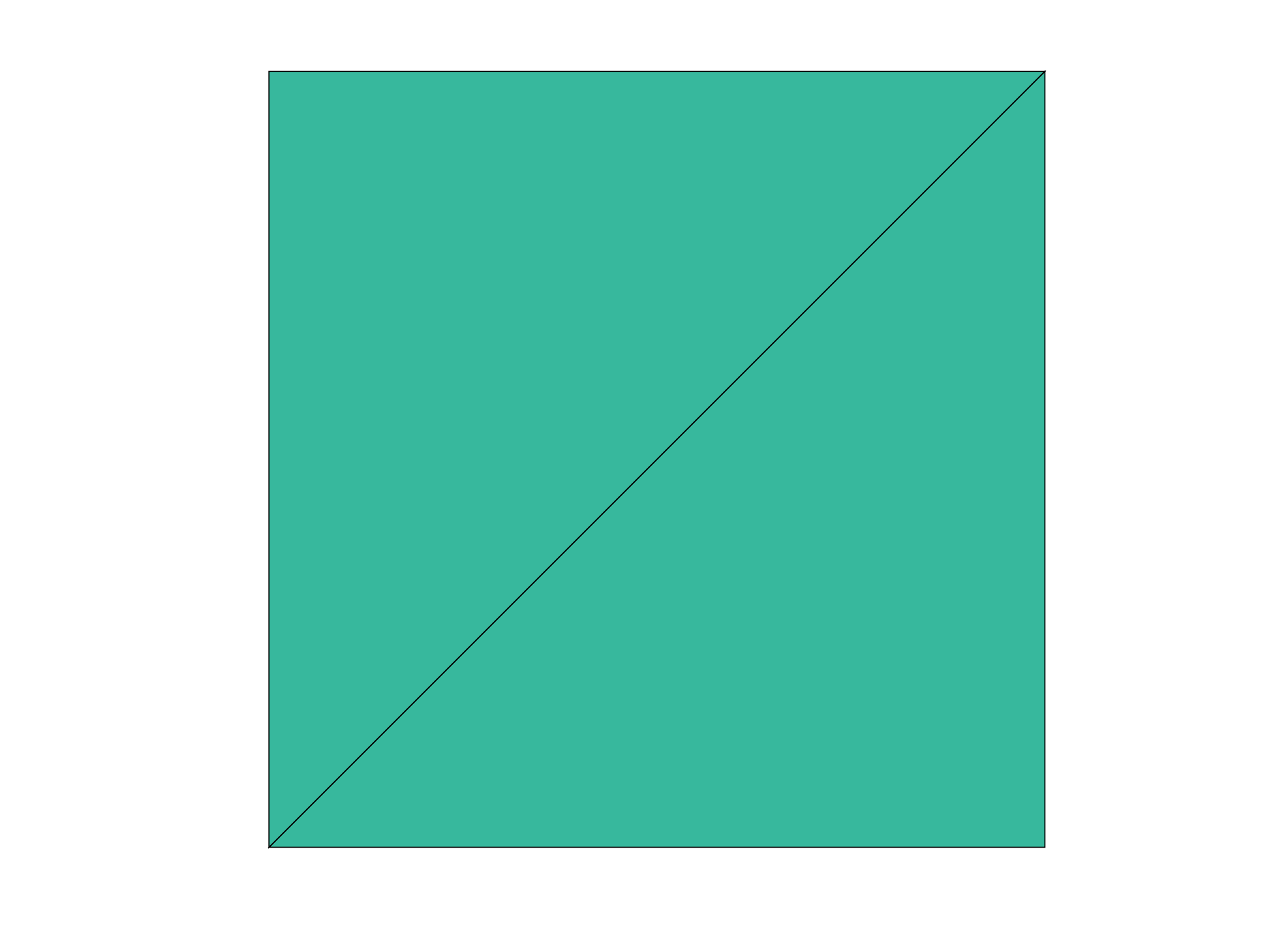}
        \end{minipage}%
        \caption{Initial mesh for all examples with a $(0,1)^2$ domain}%
        \label{initialmesh}
\end{figure}

Since we have two formulations for $\bsigma_h$ and two recoveries for $u_h$, there are four variants to find the numerical solution pair $(u_h,\bsigma_h)$. We use LSFEMi-j to denote them, with $i$ denoting the method to get $\bsigma_h$ and $j$ denoting the method to recover $u_h$. For example, LSFEM1-2 means we use LSFEM1 to get $\bsigma_h$ and use the second recovery to get $u_h$. For almost all our numerical tests, we find that the results of the four combinations are identical. We only show the full 4 combinations for one example, and show only one option for other examples.


\subsection{An example with a constant advection field and a piecewise constant solution, on a matching grid}
In this example, we only need to do a thought experiment, although the actual computation does confirm our result.

Consider the following problem: $\O = (0,1)^2$ with $\bbeta = (1/\sqrt{2},1/\sqrt{2})^T$. The inflow boundary is $\{x=0, y\in (0,1)\} \cup \{x\in (0,1), y=0\}$, i.e., the west and south boundaries of the domain.
Let $\gamma =1$ and choose $g$ and $f$ such that the exact solution $u$ is
$$
u = \left\{ \begin{array}{lll}
1 & \mbox{in} & y>x, \\[2mm]
0 & \mbox{in} & y<x.
\end{array} \right.
$$
If we choose the mesh aligned with the discontinuity, for example, any refinements of the mesh in Fig. \ref{initialmesh}. Note that the exact $\bsigma \in RT_0$. By the best approximation property Theorem \ref{thm_bestapp}, the numerical solution $\bsigma_h$ is identical to the exact solution. The recovered $u_h$ is also identical to $u$. So no further refinements are needed.
This is not true when $C^0$ finite elements are used to approximate the discontinuous $u$ as in \cite{BochevChoi:01,DMMO:04,BG:09,BG:16}, many unnecessary refinements are needed.

\subsection{An example with a global smooth solution}
Consider the following simple problem: $\O = (0,1)^2$ with $\bbeta = (1,1)^T$. The inflow boundary is $\{x=0, y\in (0,1)\} \cup \{x\in (0,1), y=0\}$, i.e., the west and south boundaries of the domain. Let $\gamma=1$. Choose $f$ and $g$ such that the exact solution is $u =\sin(x+y)$.

\begin{figure}[!htb]
\centering 
\subfigure[LSFEM1-1]{ 
\includegraphics[width=0.45\linewidth]{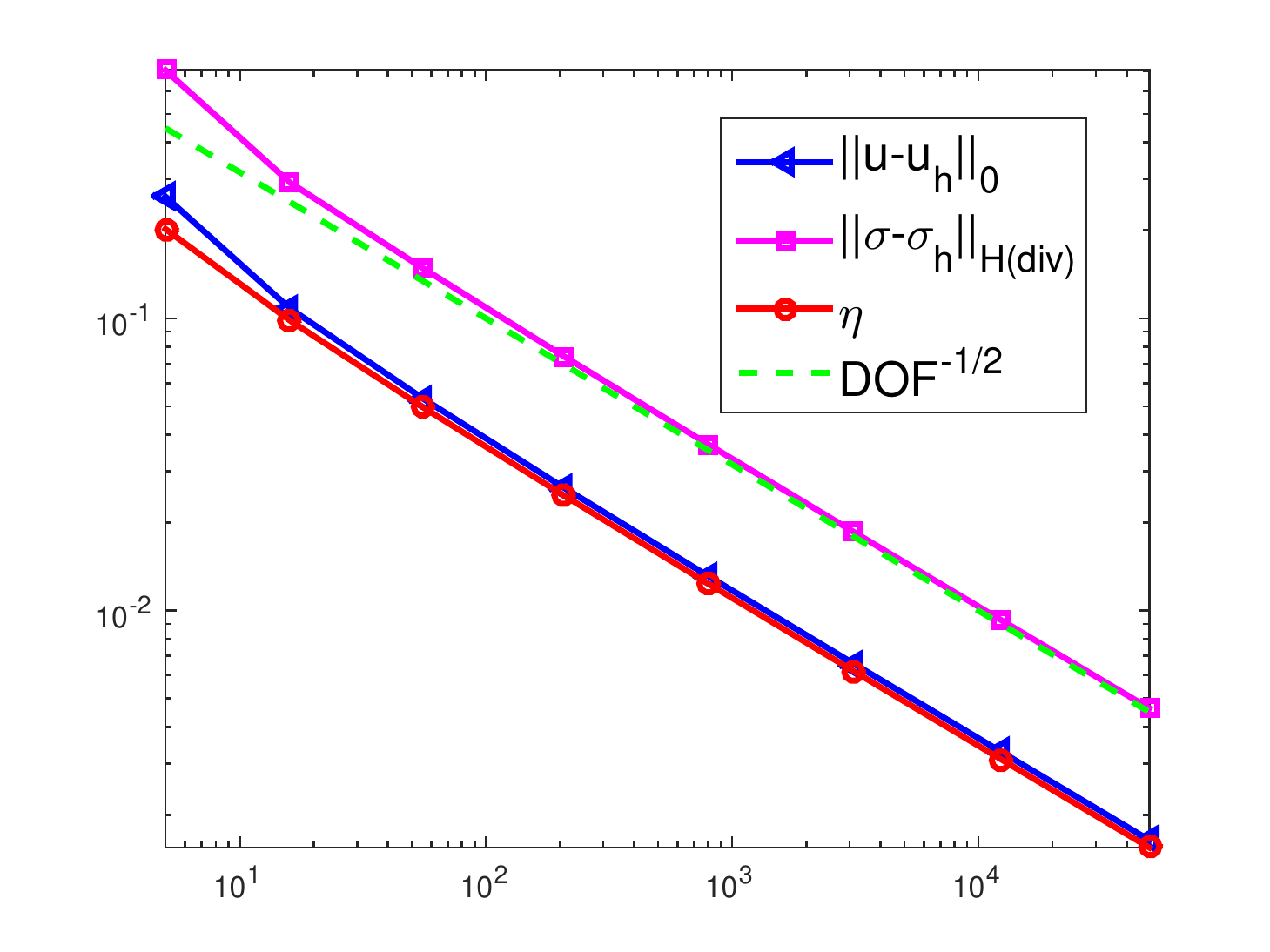}}
\hspace{0.01\linewidth}
\subfigure[LSFEM1-2]{
\includegraphics[width=0.45\linewidth]{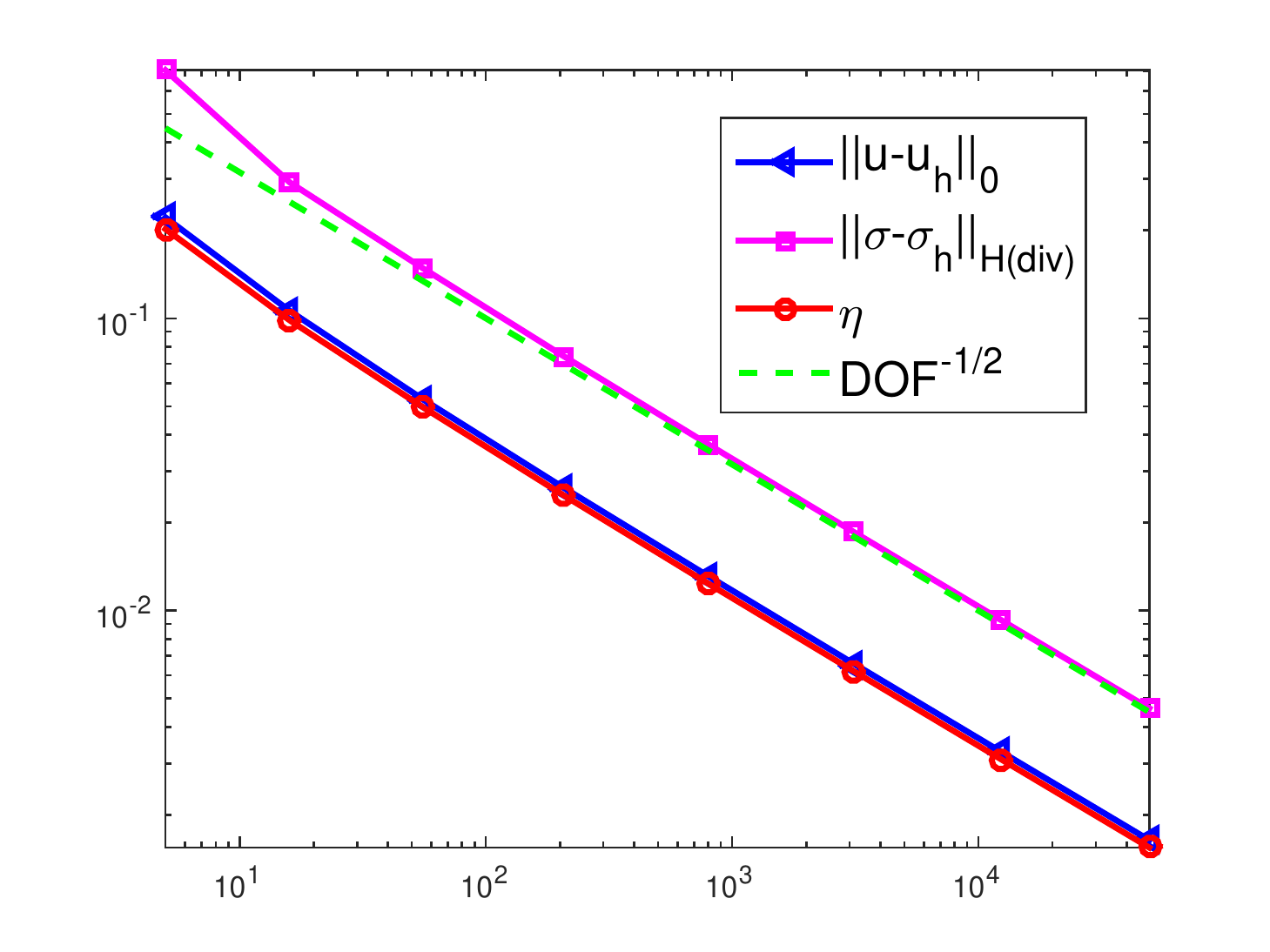}}
\vfill
\subfigure[LSFEM2-1]{
\includegraphics[width=0.45\linewidth]{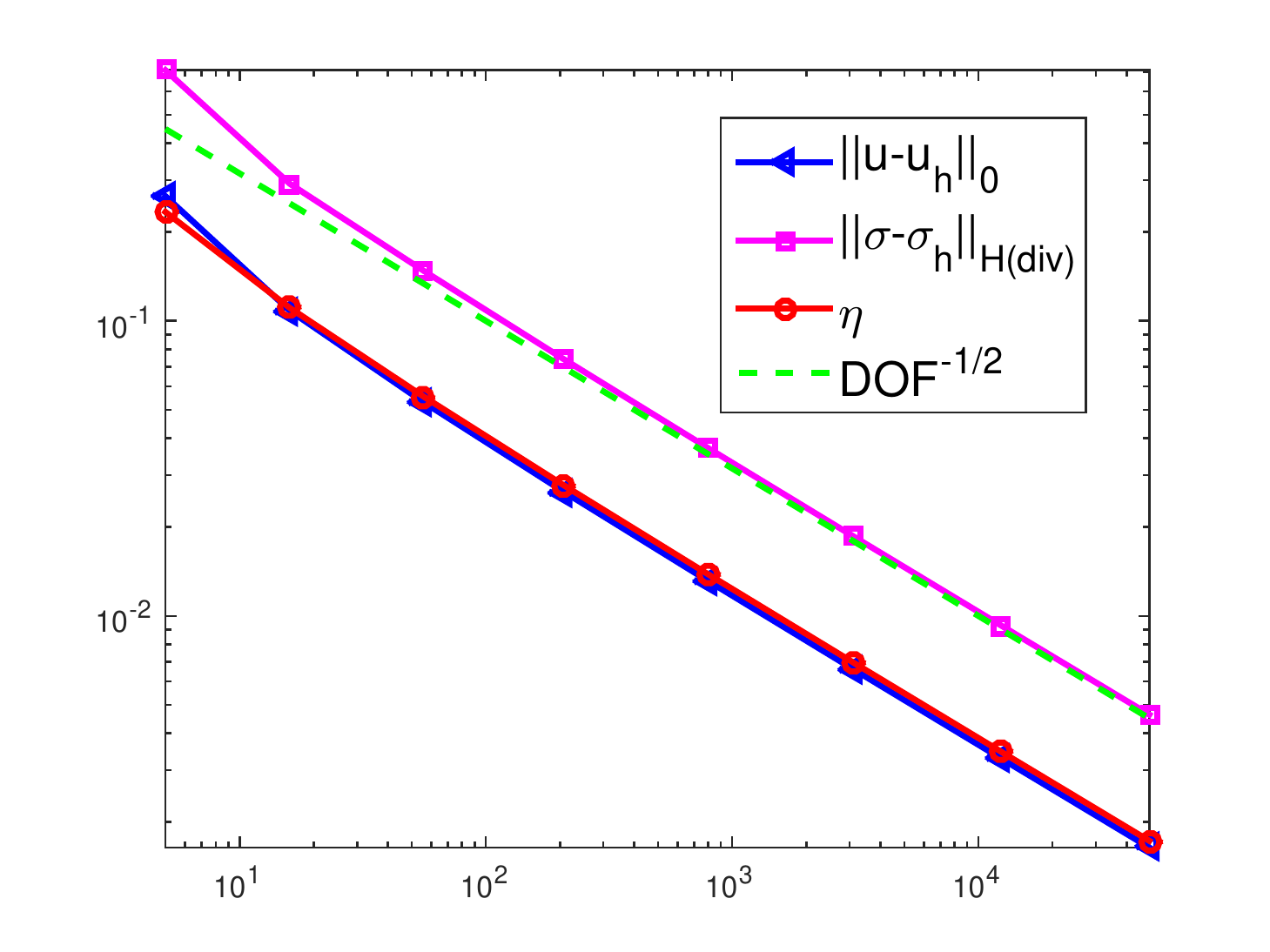}}
\hspace{0.01\linewidth}
\subfigure[LSFEM2-2]{
\includegraphics[width=0.45\linewidth]{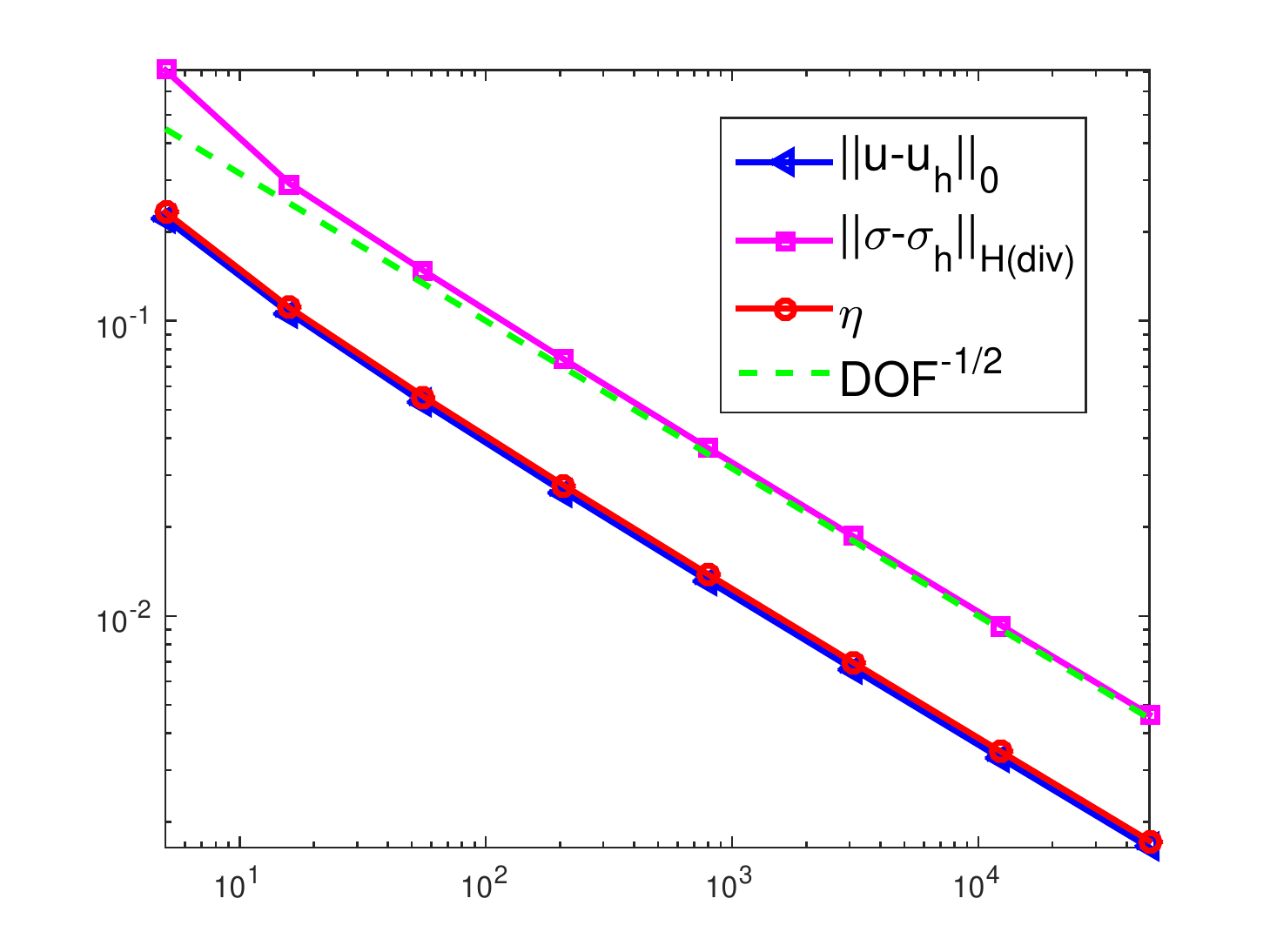}}
\caption{Global smooth solution: convergence histories on uniformly refined meshes}
\label{error_smooth}
\end{figure}

In Fig. \ref{error_smooth}, the convergence histories on uniformly refined meshes are shown. For all 4 methods, errors measured in the least-squares energy norms, $\|\bsigma-\bsigma_h\|_{H(\divvr)}$, and $\|u-u_h\|_0$ are all of order $1$. 

In Fig. \ref{error_smooth},  we also notice that all combinations of the methods have almost identical numerical results.

\subsection{Peterson example}
We consider a famous example from  Peterson \cite{Peterson:91}. Consider the test problem from section 3 of Peterson \cite{Peterson:91}: Let $\O = (0,1)^2$ and $\bbeta = (0,1)^T$. The inflow boundary $\Gamma_-$ is $\{x\in (0,1), y=0\}$, i.e., the south boundary of the domain.
\begin{eqnarray}
u_y = \gradt(\bbeta u)&=& 0 \quad \mbox{ in } \O,\\
u|_{\Gamma_-} &=& x \quad \mbox{ on } \Gamma_-.
\end{eqnarray} 
The exact solution is $u=x$. The mesh is chosen to be in the pattern on the left of Fig. \ref{error_peterson}. Since $\gamma=0$, we only test this example with the method LSFEM1-1. We compute a series of solutions by LSFEM1-1 on meshes with $h$ from $1/6$, $1/12$, $\cdots$, to $1/768$. The convergence result is plotted on the right of Fig. \ref{error_peterson}. It is observed that the error in LS norm still converges in the order of $1$, but the $L^2$-norm of $u-u_h$ and $H(\divvr)$-norm of $\bsigma-\bsigma_h$ only converges in the order of $3/4$.

\begin{figure}[!htb]
\centering 
\subfigure[Peterson mesh with $h=1/6$]{ 
\includegraphics[width=0.45\linewidth]{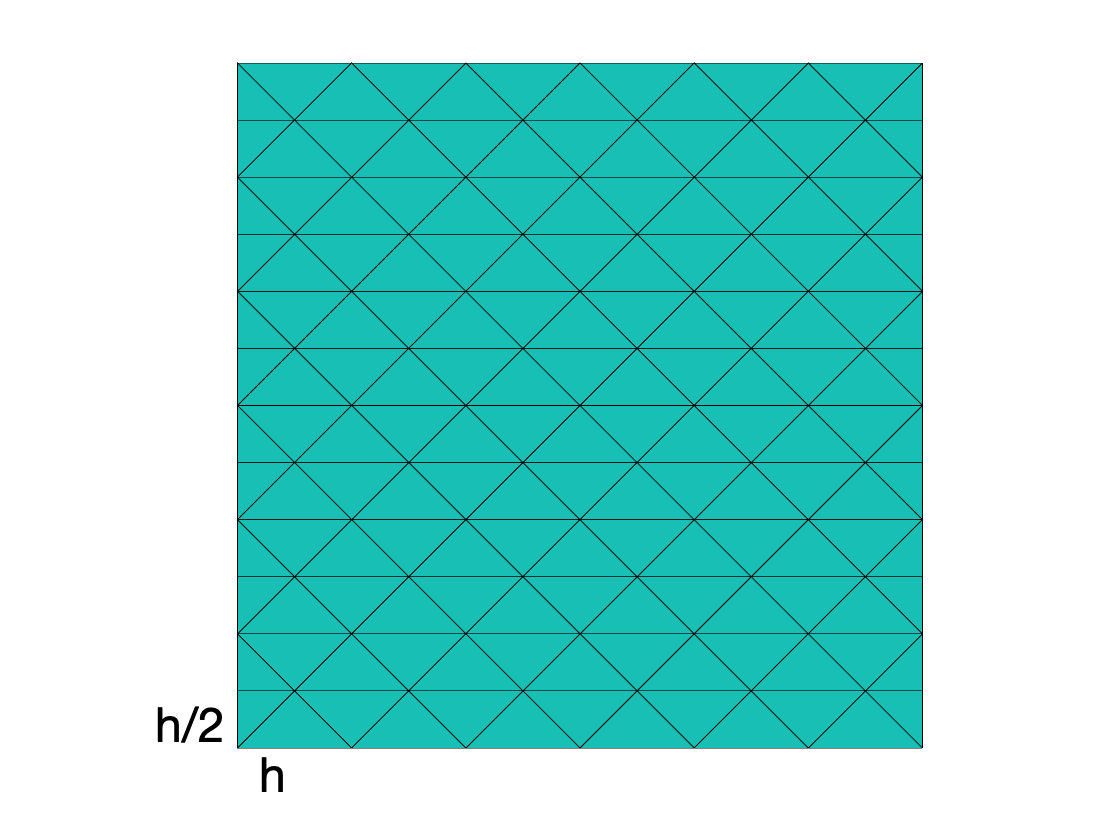}}
~
\subfigure[LSFEM convergence]{
\includegraphics[width=0.45\linewidth]{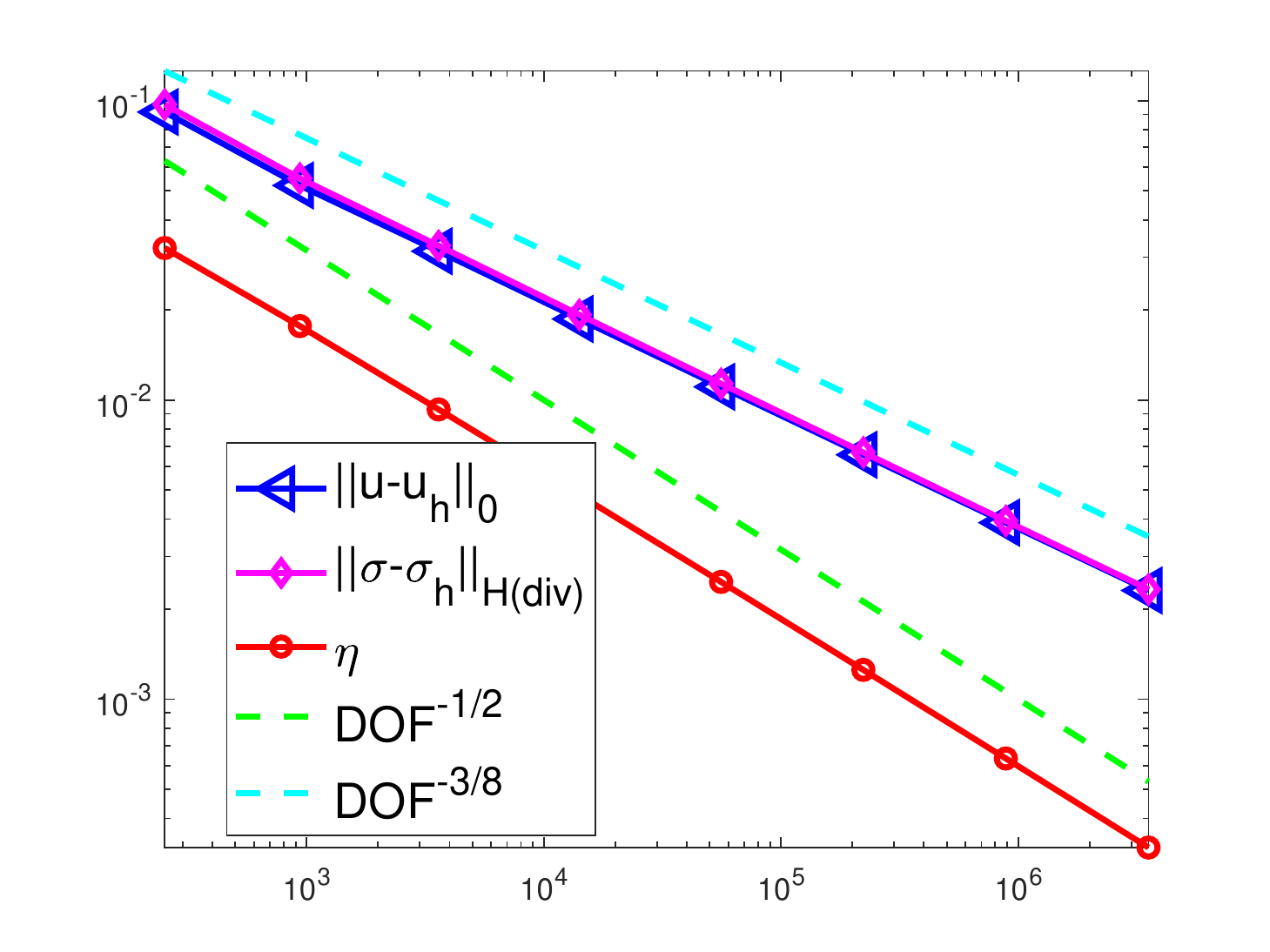}}
\caption{Peterson example}
 \label{error_peterson}
\end{figure}

\subsection{An example with a piecewise smooth solution, on a matching grid}
Consider the following problem: $\O = (0,1)^2$ with $\bbeta = (1/\sqrt{2},1/\sqrt{2})^T$. The inflow boundary is $\{x=0, y\in (0,1)\} \cup \{x\in (0,1), y=0\}$, i.e., the west and south boundaries of the domain. Let $\gamma =1$. Choose $f$ and $g$ such that the exact solution $u$ is
$$
u = \left\{ \begin{array}{lll}
	\sin(x+y) &\mbox{if} &y>x, \\[2mm]
	\cos(x+y) &\mbox{if} &y<x.
\end{array} \right.
$$

We choose an initial mesh that matches the discontinuity (Fig. \ref{initialmesh}) and uniformly refine it for $8$ times. In Fig. \ref{error_pws}, we show the convergence histories. The convergence order of the error in least-squares norms is $1$, which matches the optimal convergence theory. The orders of $\|\bsigma-\bsigma_h\|_{H(\divvr)}$ and $\|u-u_h\|_0$ are less than $1$ (about $0.6$ at late stages). 

This one and example 5.3 suggest that the norm equivalence (or in discrete sub-spaces):
$$
\tri \btau \tri \approx \|\btau\|_{H(\divvr;\O)} \quad \forall \btau \in H_{0,-}(\divvr;\O),
$$
should not be true.
\begin{figure}[!ht]
\centering 
\includegraphics[width=0.45\linewidth]{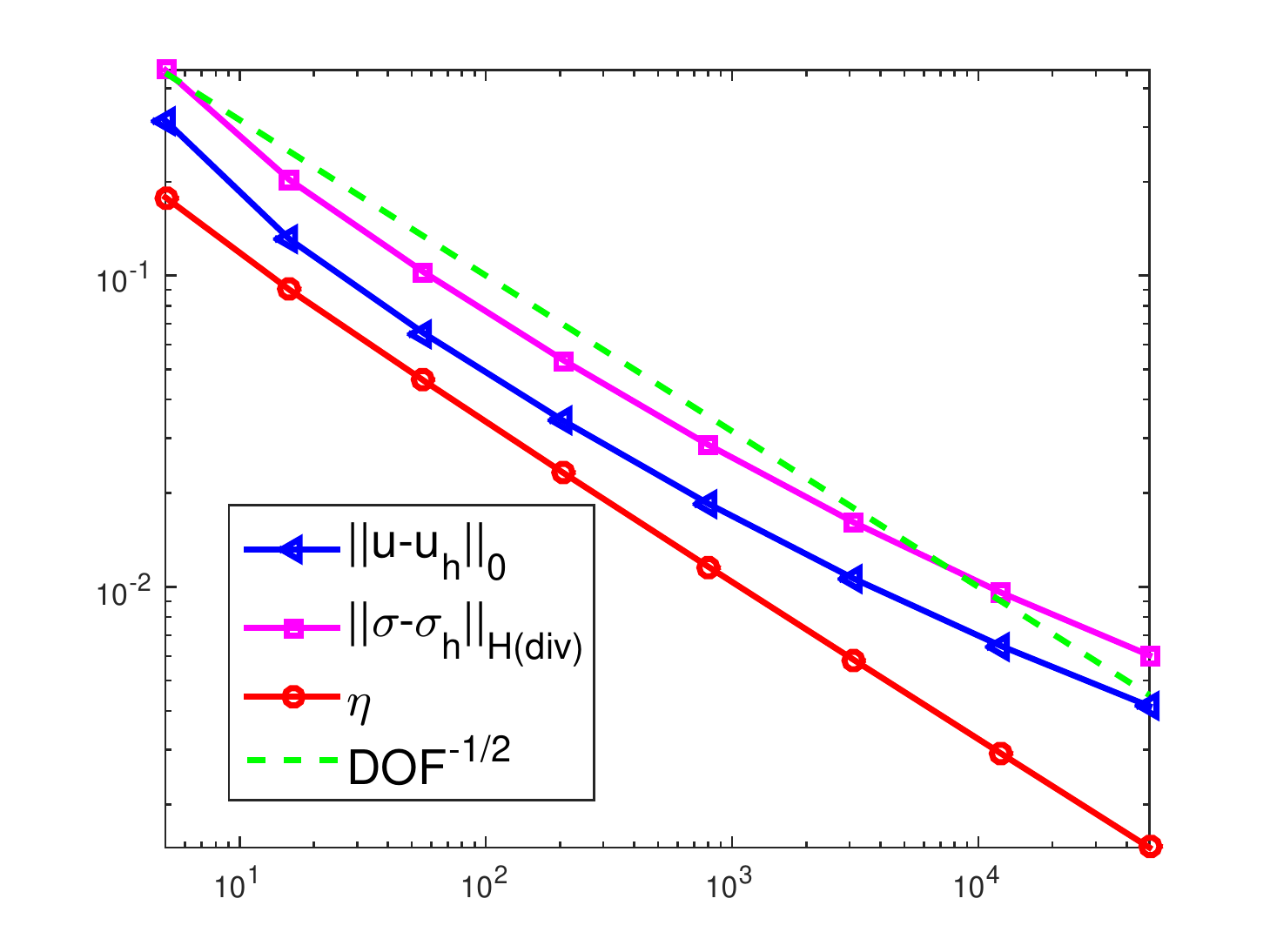}
\caption{Piecewise smooth solution with a matching mesh: convergence histories for on uniformly refined meshes}
\label{error_pws}
\end{figure}

\subsection{An example with a piecewise smooth solution, on non-matching grid}
In this example, we discuss the over/undershooting of the solution when the mesh is not matched with discontinuity.

Consider the problem: $\O = (0,2)\times (0,1)$ with $\bbeta = (0,1)^T$. The inflow boundary is $\{x\in (0,2), y=0\}$, i.e., the south boundary of the domain. Let $\gamma =1$ and $f=1$. Choose the inflow boundary condition
$$
u(x,0) = \left\{ \begin{array}{lll}
	0 & \mbox{if} & x< \pi/3, \\[2mm]
	1 & \mbox{if} & x > \pi/3,
\end{array} \right.
$$
such that the exact solution is
$$
u(x,y) = \left\{ \begin{array}{lll}
1-e^{-y} & \mbox{if} & x< \pi/3, \\[2mm]
1 & \mbox{if} & x > \pi/3.
\end{array} \right.
$$
We set the initial mesh to be as shown in Fig. \ref{inimesh_pi}. The bottom central node is $(\pi/3,0)$ and the top central node is $(1,1)$. So the inflow boundary mesh is matched with the inflow boundary discontinuity while the mesh is not aligned with the discontinuity in general and will never match it if a bisection mesh refinement is used. 
\begin{figure}[!ht]
   \centering
   \begin{minipage}[!hbp]{0.45\linewidth}
        \includegraphics[width=0.99\textwidth,angle=0]{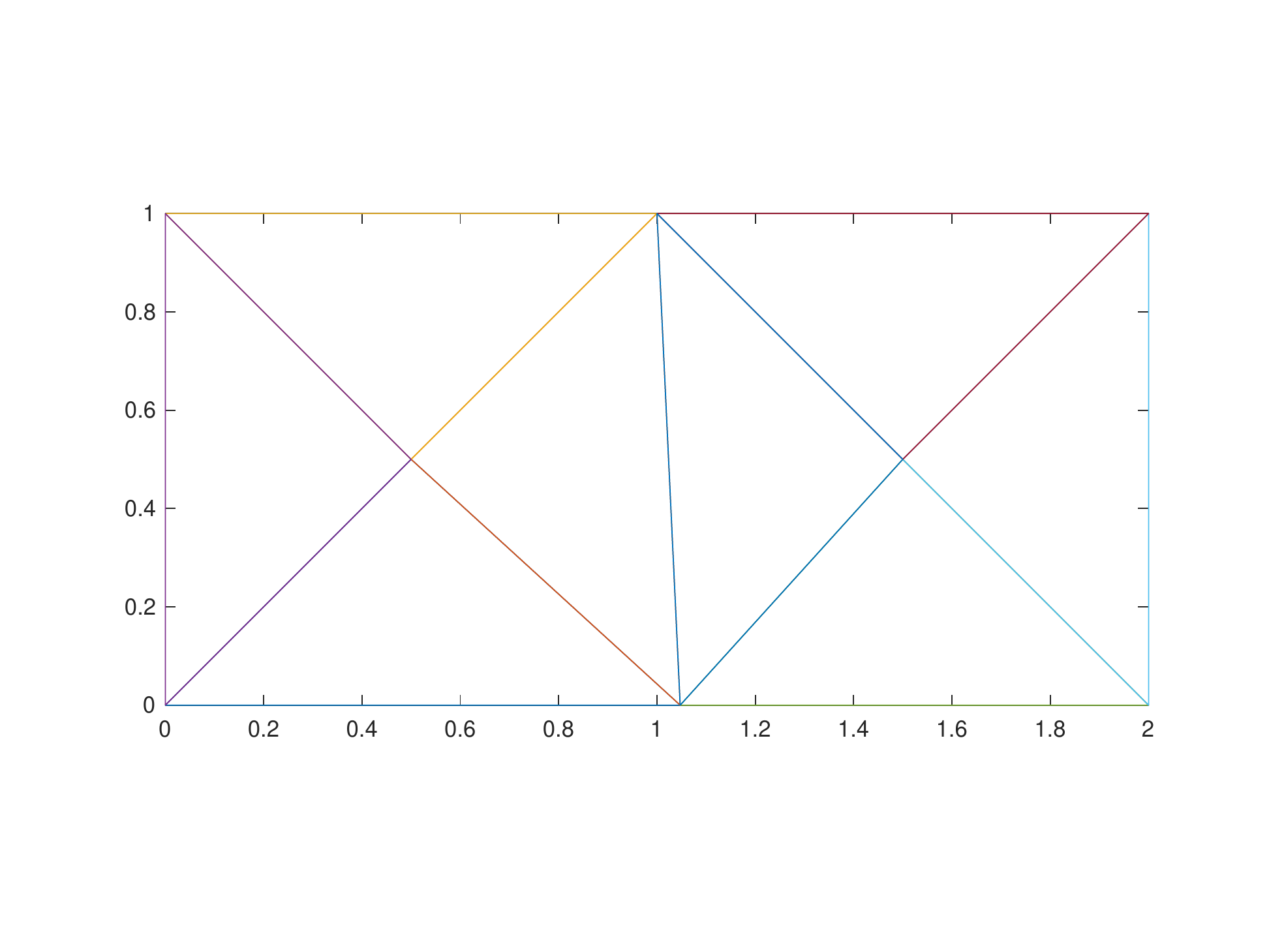}
       \end{minipage}%
       \caption{Piecewise continuous problem on a non-matching mesh: the initial mesh}%
        \label{inimesh_pi}
\end{figure}

On the left of Fig. \ref{outflow_solution_pws}, we show the solution $u_h$ on a mesh after 8 uniform refinements of the initial mesh. In order to study the overshooting phenomenon on the outflow boundary, we only draw the graph of $u_h$ on $y=1$, that is, we plot the $u_h$ value at the midpoint of x-axis of each elements with edges on $y=1$. The solutions are almost identical for all 4 combinations. Small under/overshooting can be found. The maximum of numerical $u_h$ on $y=1$ is $1.0430$ with the exact solution $u=1$, and the minimum of numerical $u_h$ is $0.6210$ with the exact $u=0.6321$.

\begin{figure}[!htb]
\centering 
\subfigure[outflow solution]{ 
\includegraphics[width=0.45\linewidth]{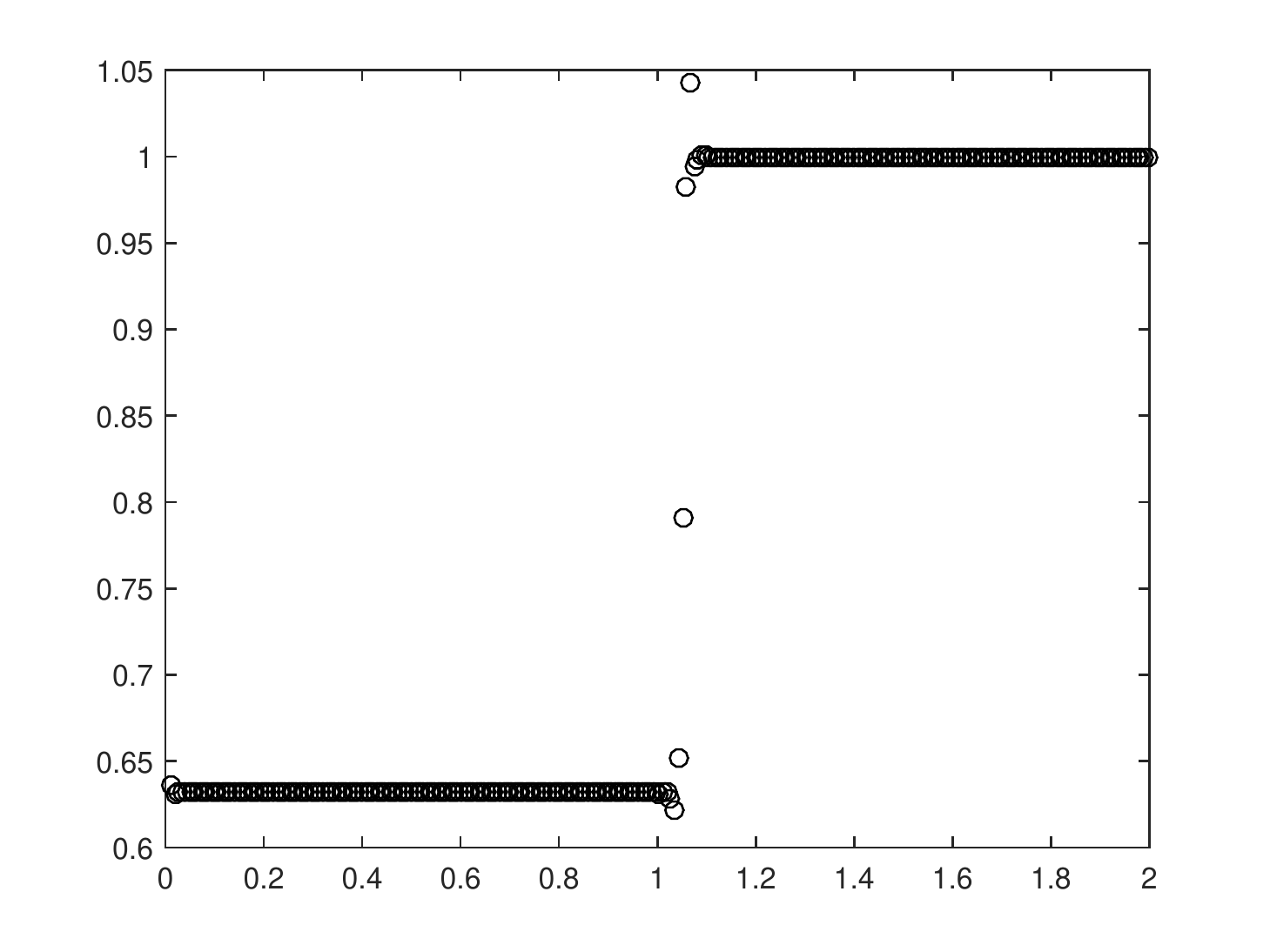}}
\hspace{0.01\linewidth}
\subfigure[convergence histories]{
\includegraphics[width=0.45\linewidth]{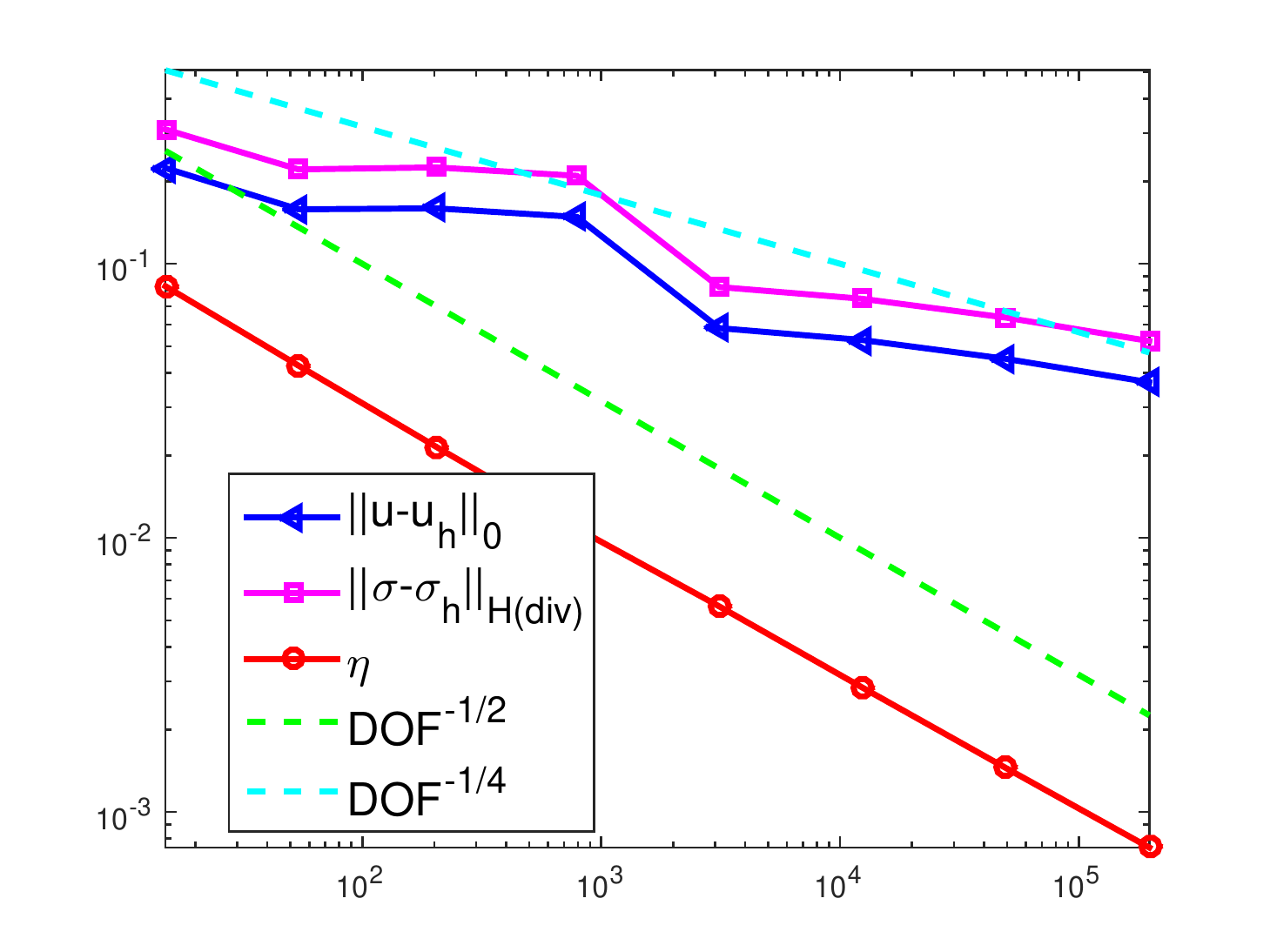}}
\caption{Piecewise continuous problem on a non-matching uniform mesh}
\label{outflow_solution_pws}
\end{figure}

On the right of Fig. \ref{outflow_solution_pws}, we plot the convergence results of uniform refinements. The order of convergence of errors measured in least-squares norms is still about $1$, the optimal order. The order of $\|u-u_h\|_0$ is smaller than $1/2$.

We then test the problem with adaptive algorithm. In Fig. \ref{adaptivemesh_pwc}, adaptively refined meshes after several iterations are shown. Clearly, more refinements are required along the discontinuity. We do find that for both two LSFEMs, the meshes are almost identical.

\begin{figure}[!ht]
\centering 
\subfigure[LSFEM 1]{ 
\includegraphics[width=0.45\linewidth]{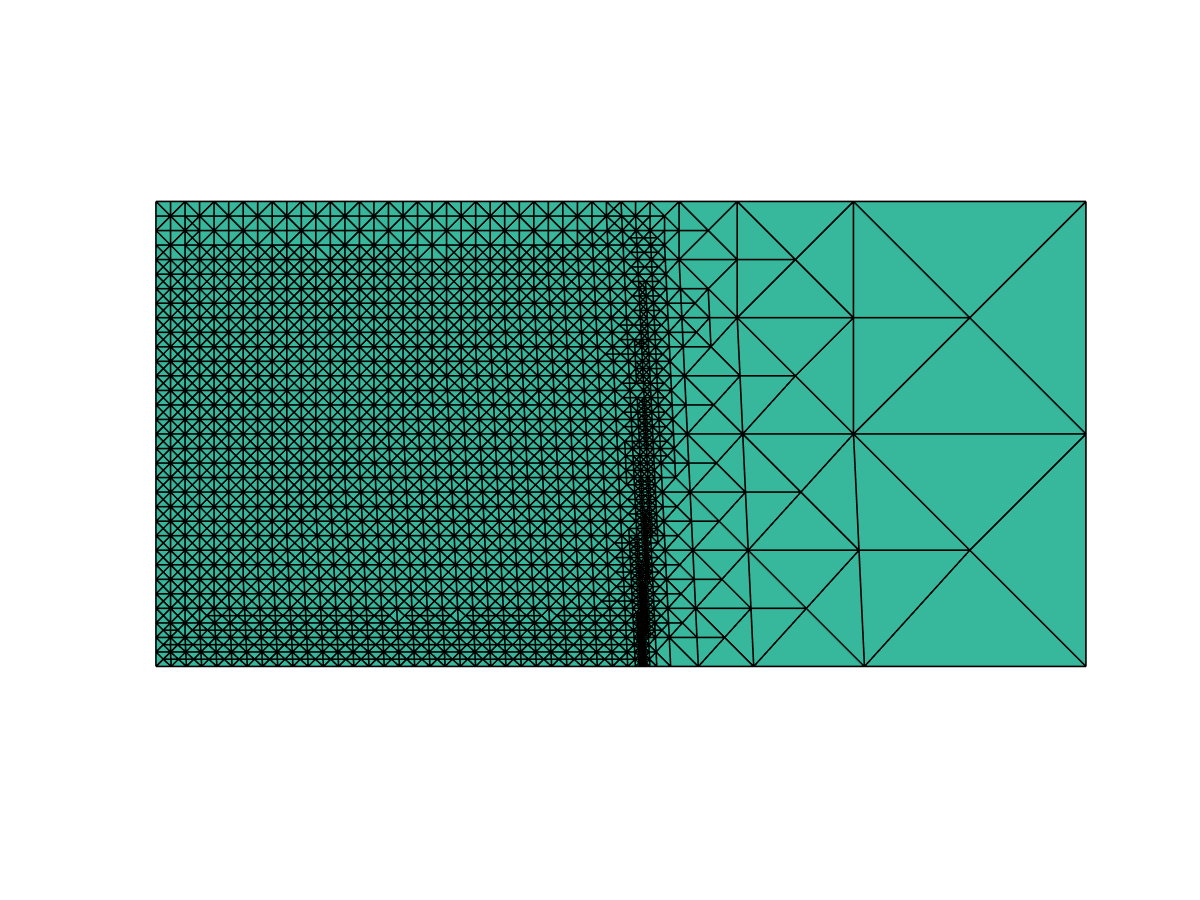}}
\hspace{0.01\linewidth}
\subfigure[LSFEM 2]{
\includegraphics[width=0.45\linewidth]{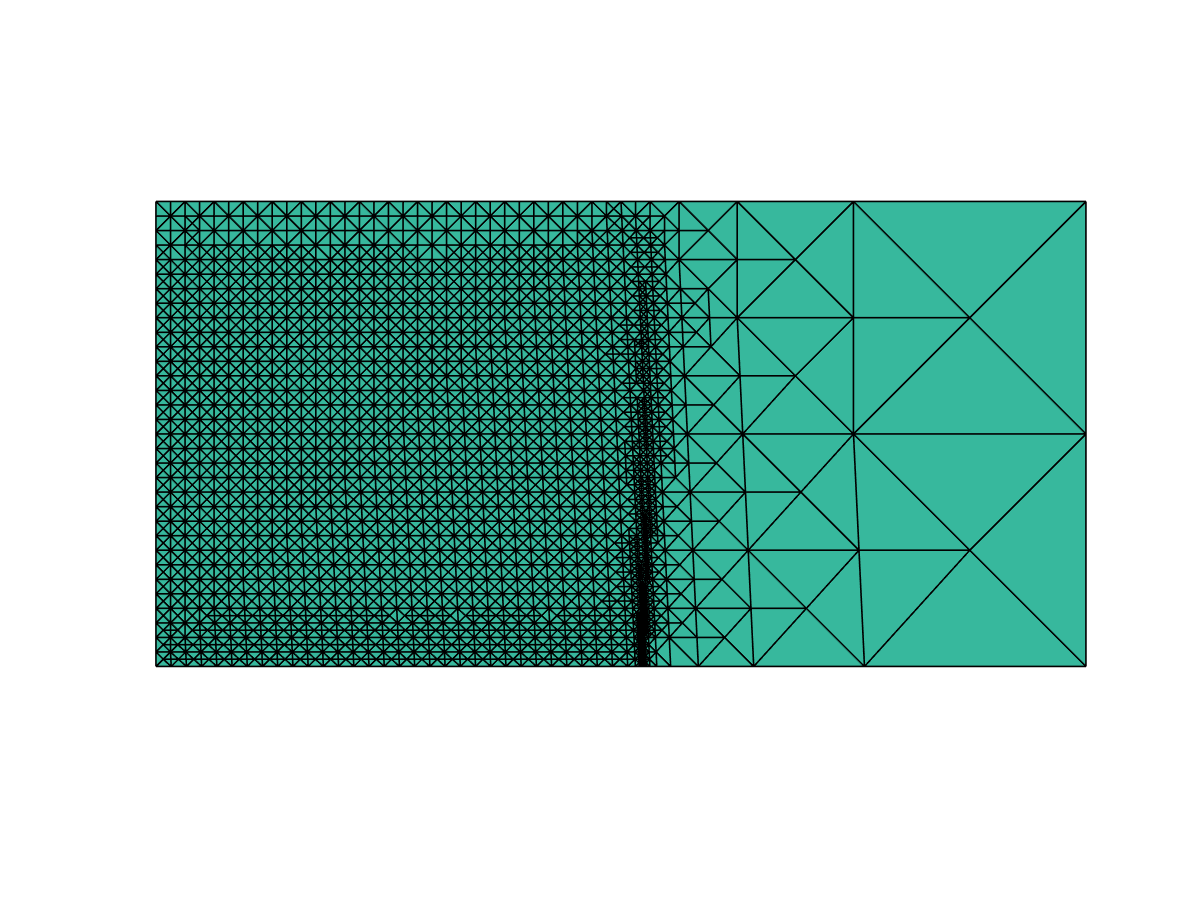}}
\caption{Piecewise continuous problem on a non-matching mesh: adaptively refined meshes after several iterations}
\label{adaptivemesh_pwc}
\end{figure}

In Fig. \ref{error_adaptive_pws}, the convergence histories for the adaptive meshes are shown. The convergence order of the error measured in the least-squares norm is optimal with order $1$, while the order of $\|u-u_h\|_0$ is about $1/2$,  which is the same order (also the best possible order) as uniform refinements for discontinuous solution on an aligned mesh, Example 5.4.

\begin{figure}[!ht]
\centering 
\includegraphics[width=0.45\linewidth]{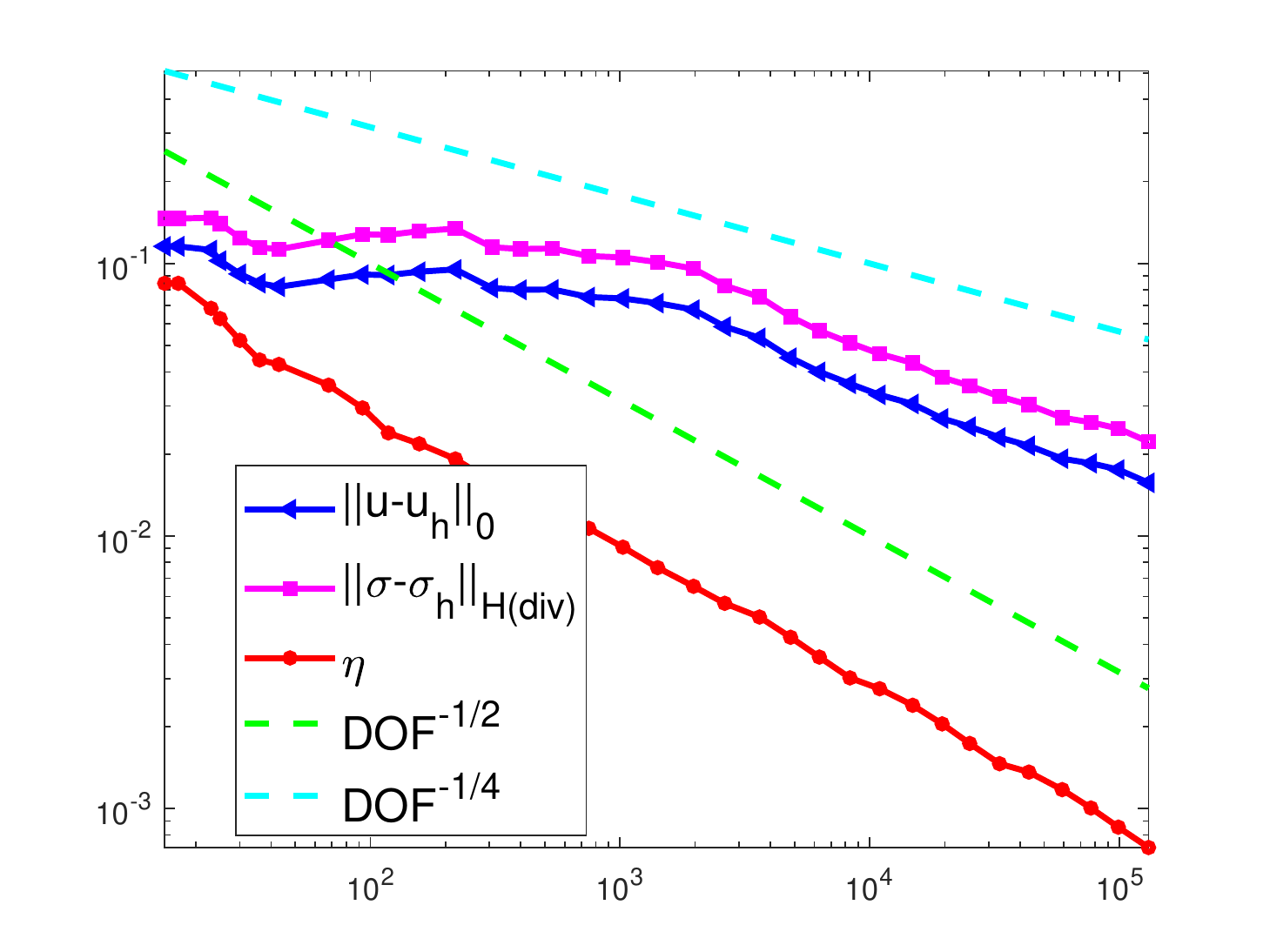}
\caption{Piecewise continuous problem on a non-matching mesh: convergence histories on adaptive refined meshes} 
\label{error_adaptive_pws}
\end{figure}

On the left of Fig. \ref{overshooting_pws}, we show the decreasing of the overshooting values by adaptive mesh refinements. Here, the overshooting value is defined as $\max(\max(u_h-1), \min(1-e^{-1}-u_h))$ on $y=1$, still along the outflow boundary. We clearly see that the overshooting value begins to decrease after the mesh is reasonably fine. When the mesh is very coarse, the overshooting is actually not very severe since $u$ is approximated by $P_0$.

\begin{figure}[!ht]
\centering 
\subfigure[reduction of overshooting]{ 
\includegraphics[width=0.45\linewidth]{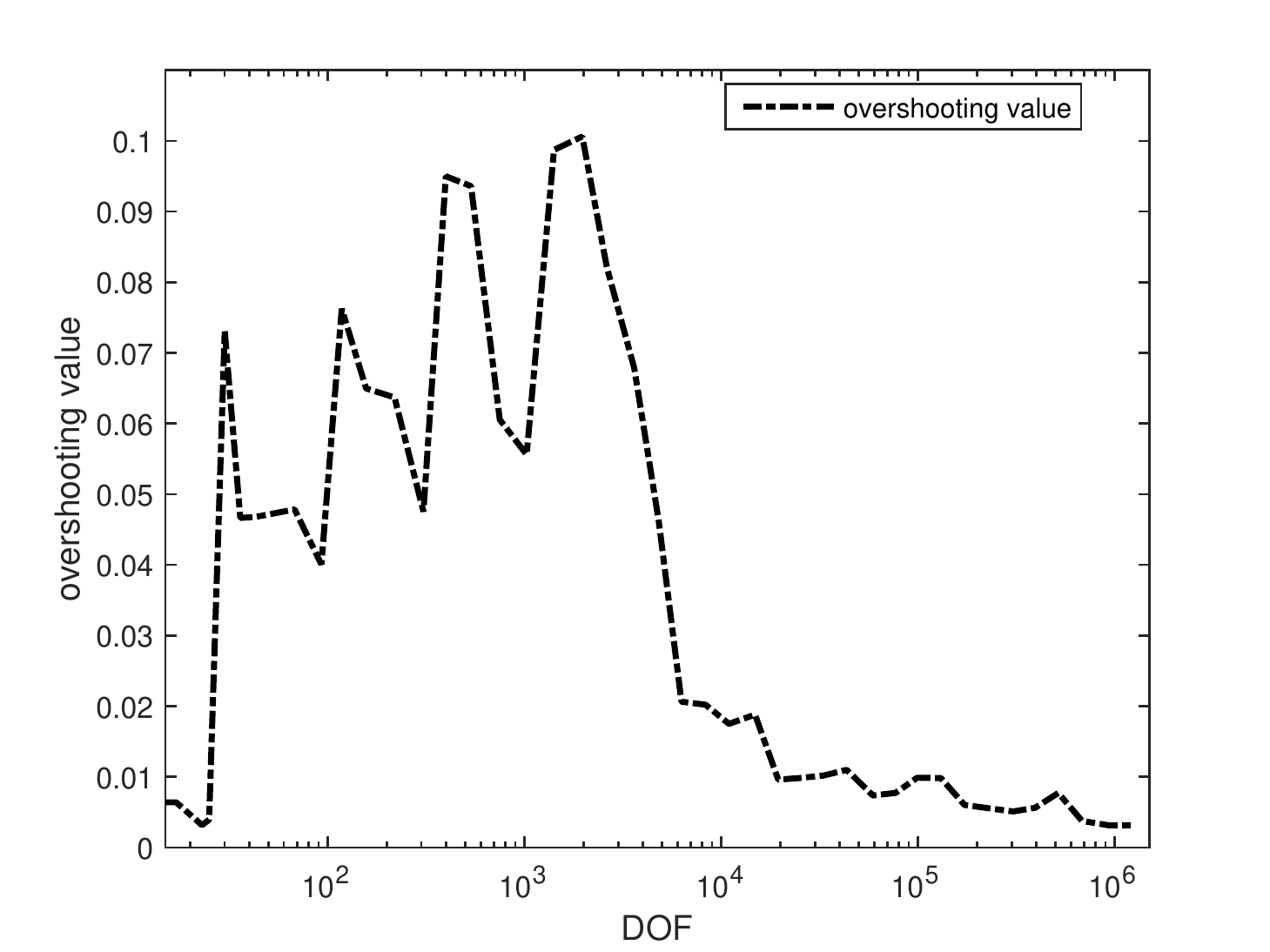}}
\hspace{0.01\linewidth}
\subfigure[outflow solution]{
\includegraphics[width=0.45\linewidth]{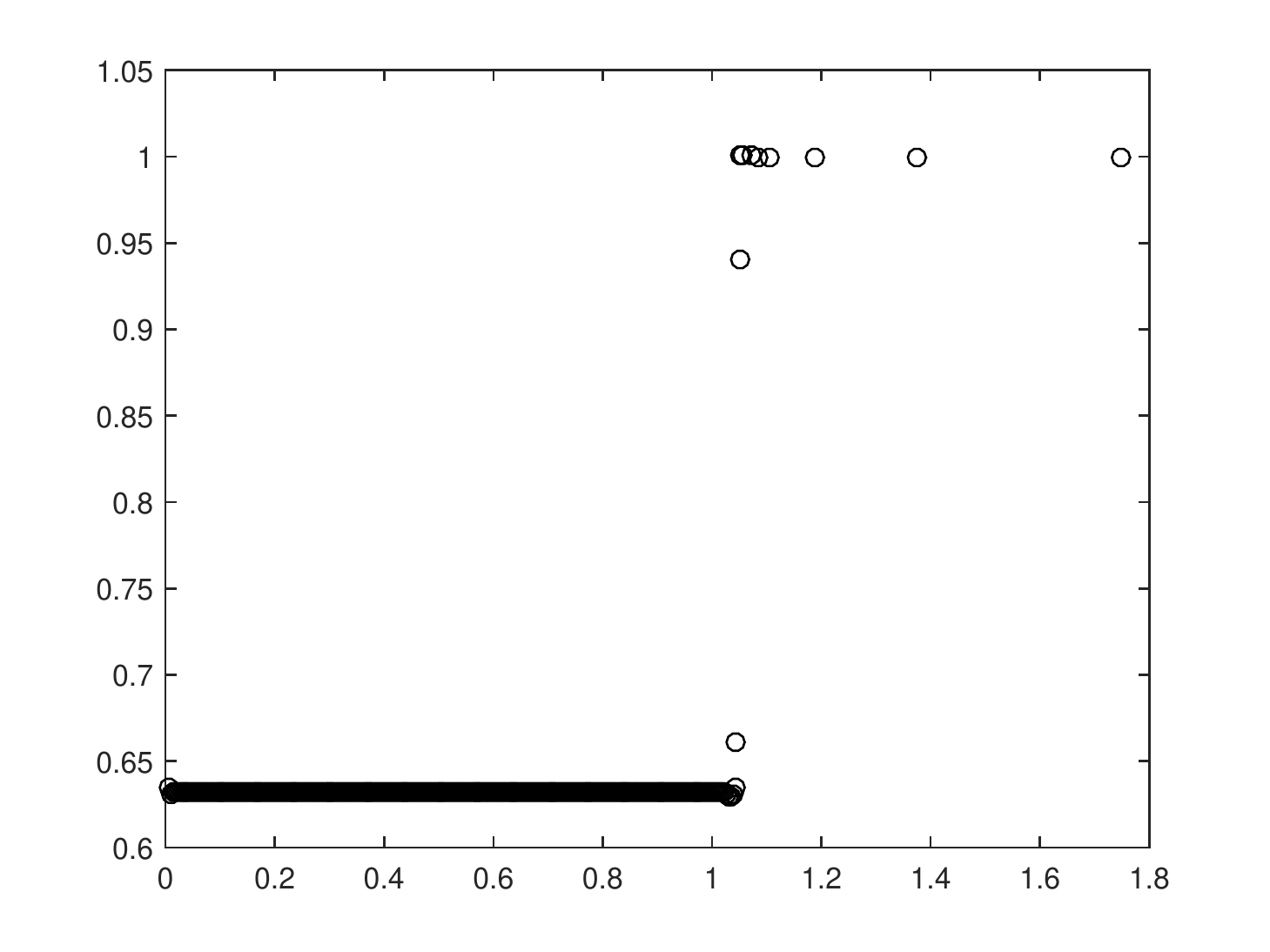}}
\caption{Piecewise continuous problem on a non-matching adaptive mesh}
\label{overshooting_pws}
\end{figure}

On the right of Fig. \ref{overshooting_pws}, we show the numerical solution $u_h$ on the outflow boundary for the final mesh. It is clear that when with the adaptive mesh and $P_0$ recovery, the overshooting phenomenon is almost neglectable. 


\subsection{An example with a piecewise smooth solution on a non-matching grid}
Consider the following simple problem: $\bbeta = (\sin(1/8),\cos(1/8))^T$ and $\O = (0,1)^2$. The inflow boundary is $\{x=0, y\in (0,1)\} \cup \{x\in (0,1), y=0\}$, i.e., the west and south boundaries of the domain. Let $\gamma =1$. Choose $f$ and $g$ such that the exact solution $u$ is
$$
u = \left\{ \begin{array}{lll}
	\sin(x+y) &\mbox{if}& y>\tan(1/8)x, \\[2mm]
	\cos(x+y) &\mbox{if}& y< \tan(1/8)x.
\end{array} \right.
$$
Note that with an initial mesh as in Fig. \ref{initialmesh}, any refinement of it will never match the discontinuity.

We show the convergence results after uniform refinements in Fig. \ref{error_uniform_pws}. The convergence order in the least-square norm is about $0.8$. Similar to example 5.5, it is worse than order $1$ but better than order $1/2$. The convergence order for $\|u-u_h\|_0$ is about $0.3$.

\begin{figure}[!ht]
\centering 
\subfigure{
\includegraphics[width=0.45\linewidth]{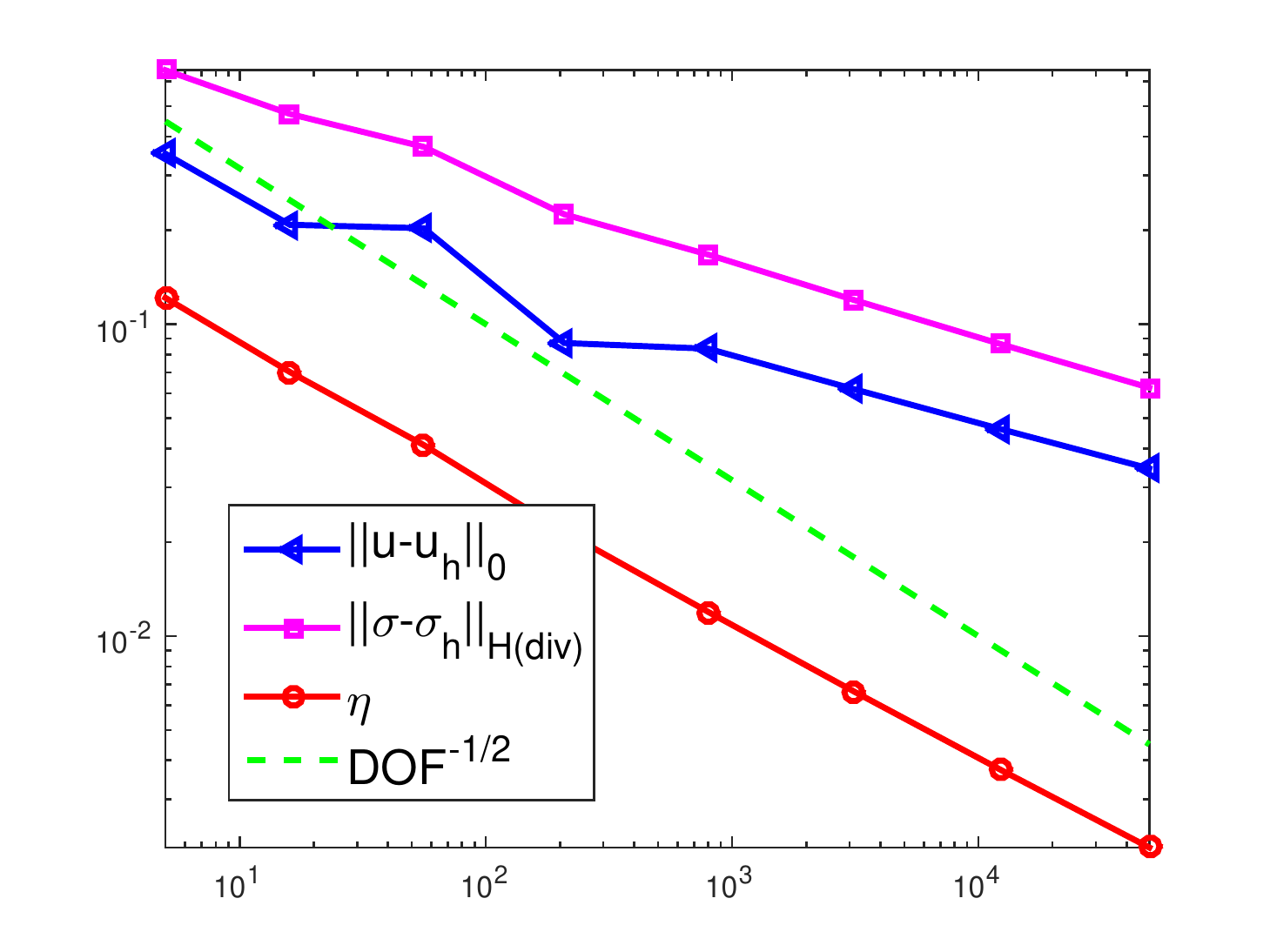}}
\caption{Piecewise smooth solution on a non-matching grid: convergence histories on uniformly refined meshes}
\label{error_uniform_pws}
\end{figure}

On the left of Fig. \ref{adaptive_mesh_pws}, adaptively refined meshes after several iterations are shown. Many refinements are generated near the discontinuity. On the right of Fig. \ref{adaptive_mesh_pws}, convergence histories of the adaptive LSFEMs are shown. The rate of convergence of the error in least-squares norms is about order $1$, and the order of $\|u-u_h\|_0$ is about $0.5$.

\begin{figure}[!ht]
\centering 
\subfigure[adaptively refined meshes after several iterations]{ 
\includegraphics[width=0.45\linewidth]{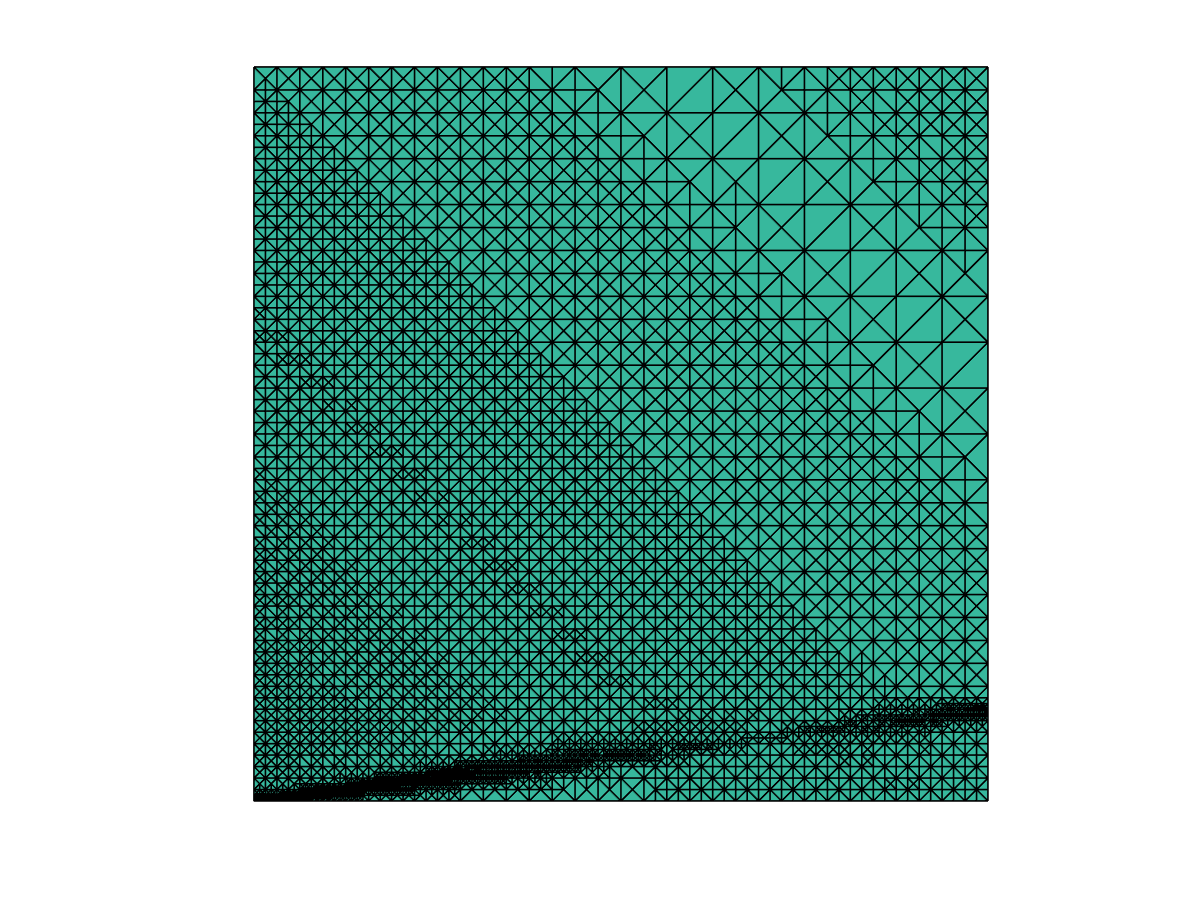}}
\hspace{0.01\linewidth}
\subfigure[convergence histories on adaptive refined meshes]{
\includegraphics[width=0.45\linewidth]{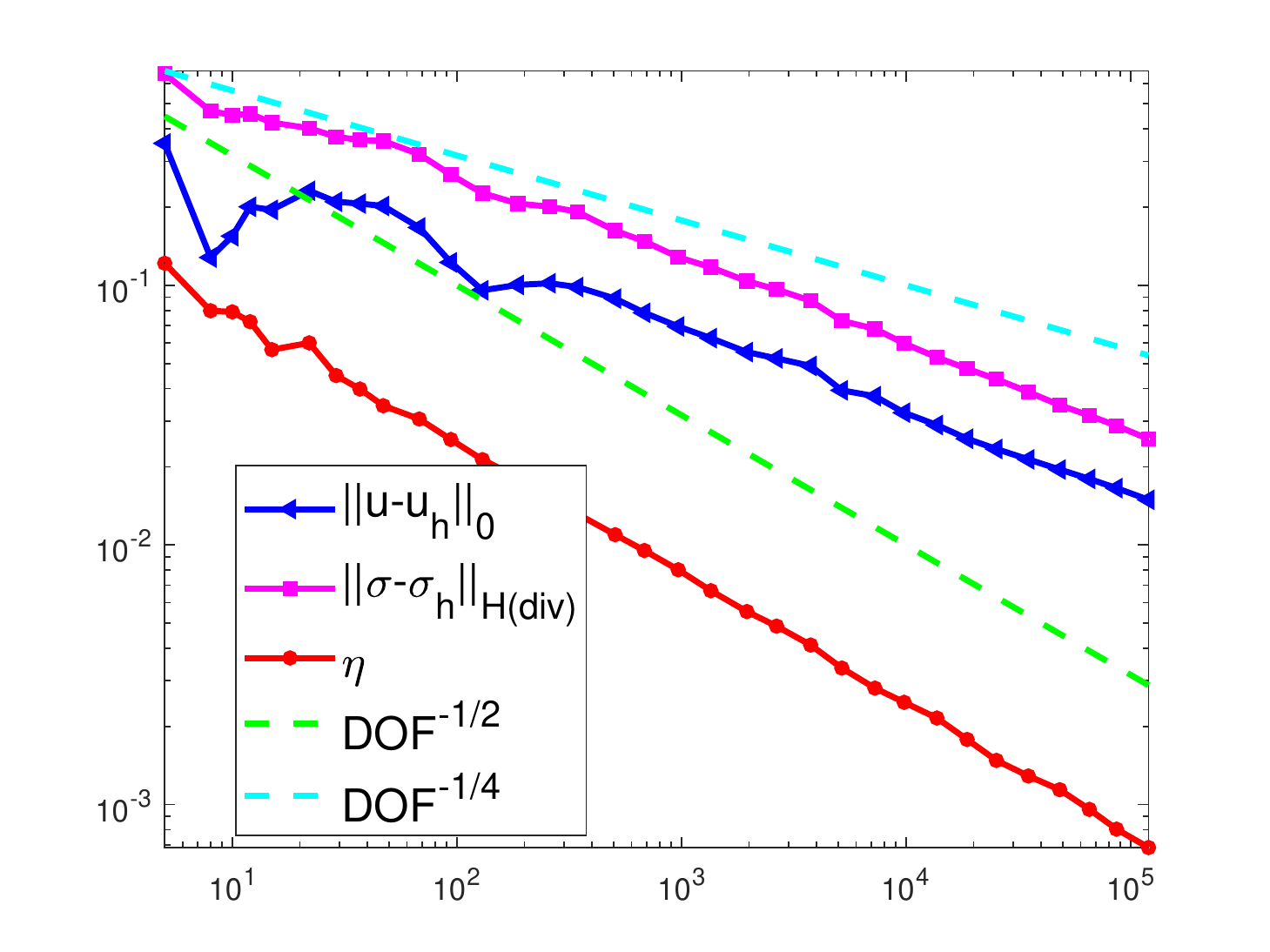}}
\caption{Piecewise smooth solution on a non-matching grid}
\label{adaptive_mesh_pws}
\end{figure}

\subsection{Curved transport example}
We consider an example similar to Example in 4.4.2 of \cite{Guermond:04}. Consider the problem on the half disk
$\O = \{(x,y) \colon x^2+y^2<1; y>0\}$. Let the inflow boundary be $\{-1<x<0; y=0\}$. Choose the advection field
$\bbeta = (\sin \theta, -\cos \theta)^T = (y/\sqrt{x^2+y^2}, -x/\sqrt{x^2+y^2})^T$, with $\theta$  the polar angle.
Let $\gamma=0$, $f=0$, and the inflow condition and the exact solution be
$$
g = \left\{ \begin{array}{lll}
1 & \mbox{if}  &-1<x<-0.5, \\[2mm]
0 & \mbox{if}  &-0.5<x<0,
\end{array} \right.
\mbox{and}\quad
u = \left\{ \begin{array}{lll}
1 & \mbox{if  }  x^2+y^2 > 0.25, \\[2mm]
0 & \mbox{otherwise}.
\end{array} \right.
$$

\begin{figure}[!htbp]
    \begin{minipage}[!hbp]{0.48\linewidth}
        \centering
        \includegraphics[width=0.99\textwidth,angle=0]{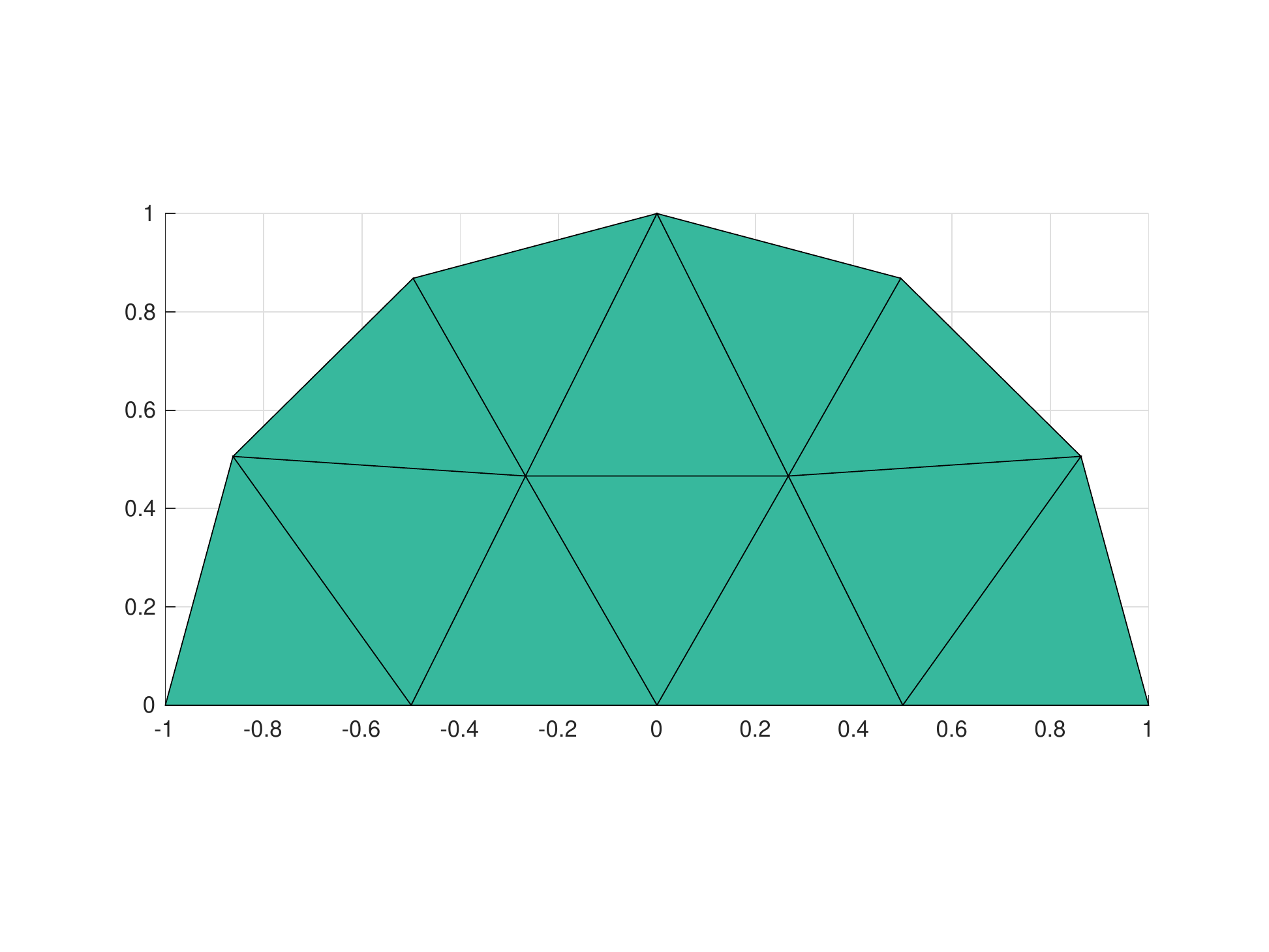}
        \end{minipage}%
        \quad
    \begin{minipage}[!htbp]{0.48\linewidth}
        \centering
        \includegraphics[width=0.99\textwidth,angle=0]{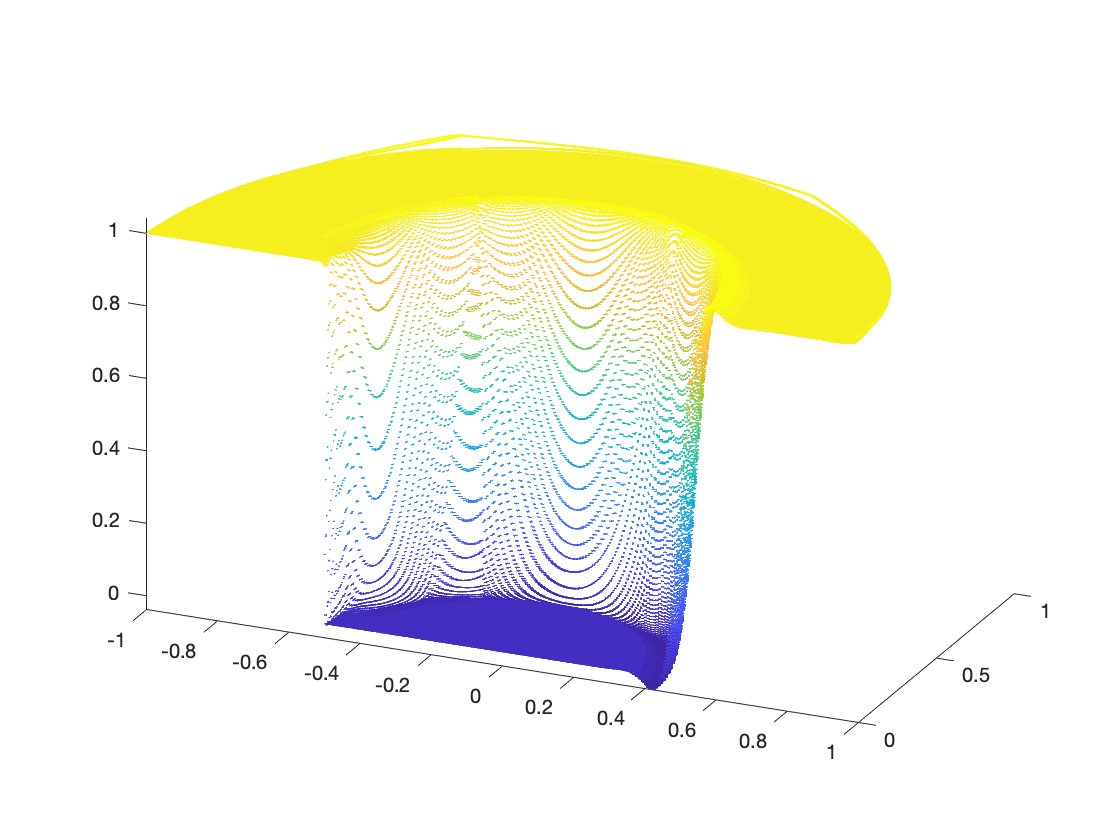}
        \end{minipage}
    \caption{Curved transport problem: initial mesh(left), numerical solution (LSFEM 1-1) on an almost uniform mesh (right)}%
        \label{curved_initialmesh}
\end{figure}

We use an initial mesh as shown on the left of Fig. \ref{curved_initialmesh}. We choose the bottom central node to be $(0,0)$ and the node left of it to be $(-0.5,0)$. So the inflow boundary mesh is alighed with the inflow boundary discontinuity. Since the advection field is curved and so is the discontinuity, the mesh will never be aligned with the discontinuity even after refinements. Since the boundary is a half circle, when we refine the mesh, we take an extra step to map those boundary nodes to the right positions on the circle.

Since $\gamma =0$ in this example, so we only use the LSFEM1-1 to compute it.

We show the numerical recovered solution $u_h$ computed by LSFEM1-1  on a mesh after 8 uniform refinements of the initial mesh on the right of Fig.  \ref{curved_initialmesh}.
Small overshootings can be observed near the discontinuity.
Along the radius, the solution is essentially one dimensional,
we project the graph of the solution onto the radius, see the left of Fig. \ref{curved_solutionuniform_projected}.
We do see the under and overshooting. The maximum and minimum values of numerical solution $u_h$ are $1.0404$ and $-0.0383$, respectively.

With uniform refinements,
the convergence rate of the error in the least-squares norm is about $0.75$
and the rates of $\|u-u_h\|_0$ and $\|\bsigma-\bsigma_h\|_{H(div)}$ are about $0.28$,
see the right of Fig. \ref{curved_solutionuniform_projected}.
Since the mesh is not aligned with the discontinuities, the convergence order of
the least-sqaures energy norm is smaller than $1$.

\begin{figure}[!htb]
    \begin{minipage}[!hbp]{0.48\linewidth}
        \centering
        \includegraphics[width=0.99\textwidth,angle=0]{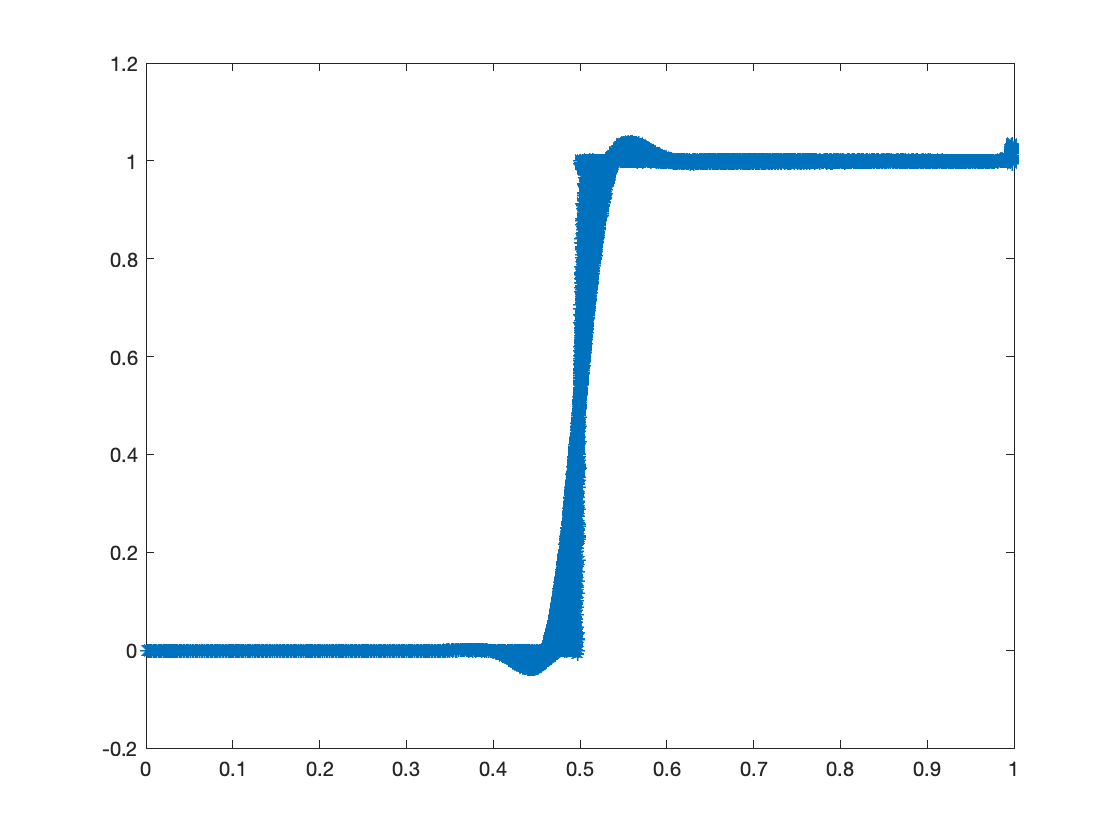}
        \end{minipage}%
        \quad
    \begin{minipage}[!htbp]{0.48\linewidth}
        \centering
        \includegraphics[width=0.99\textwidth,angle=0]{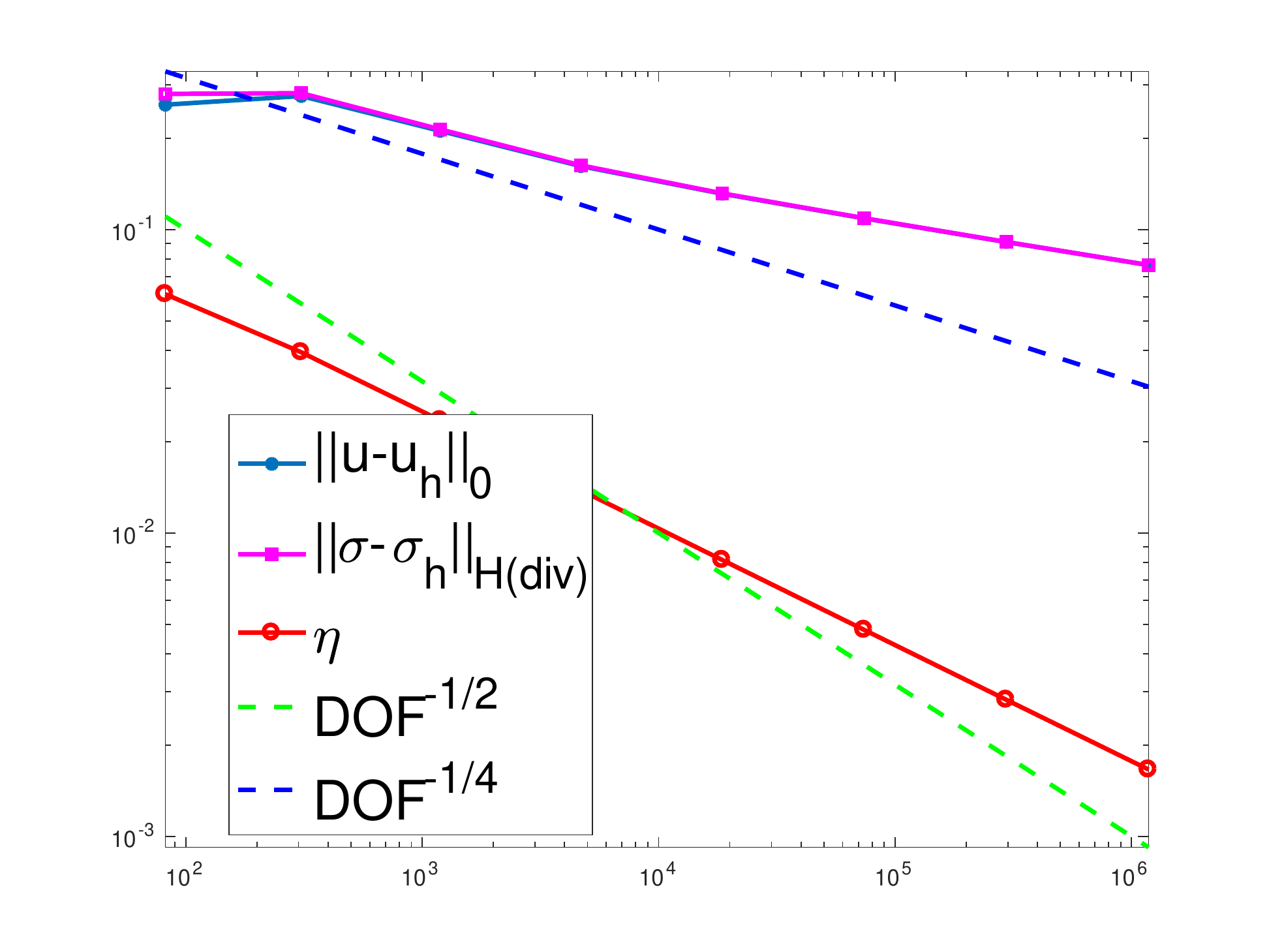}
        \end{minipage}
    \caption{Curved transport problem: projected numerical solutions on an almost uniform mesh (left), convergence behaviors on uniformly refined meshes (right)}%
        \label{curved_solutionuniform_projected}
\end{figure}

On the left of Fig. \ref{curved_adaptivemesh}, we show the adaptive mesh generated by LSFEM1-1 after several iterations. Along the discontinuity, we see many refinements. Also, almost uniform refinements can be found in the half ring where $u = 1$. The reason is that even $u$ is a constant $1$, the flux $\bsigma = \bbeta$ is not a constant vector and has approximation errors. On the other hand, in the region where $u=0$, the flux is also a zero vector and can be exactly computed. So no refinement is needed in the inner half ring.

On the right of  Fig. \ref{curved_adaptivemesh}, we show the convergence history of the adaptive method. With the adaptive least-squares finite element method, the convergence order of the error in the least-squares norm is about $1$ and is optimal, and the rates of $\|u-u_h\|_0$ and $\|\bsigma-\bsigma_h\|_{H(\divvr)}$ are about $0.5$.

\begin{figure}[!htb]
    \begin{minipage}[!hbp]{0.48\linewidth}
        \centering
        \includegraphics[width=0.99\textwidth,angle=0]{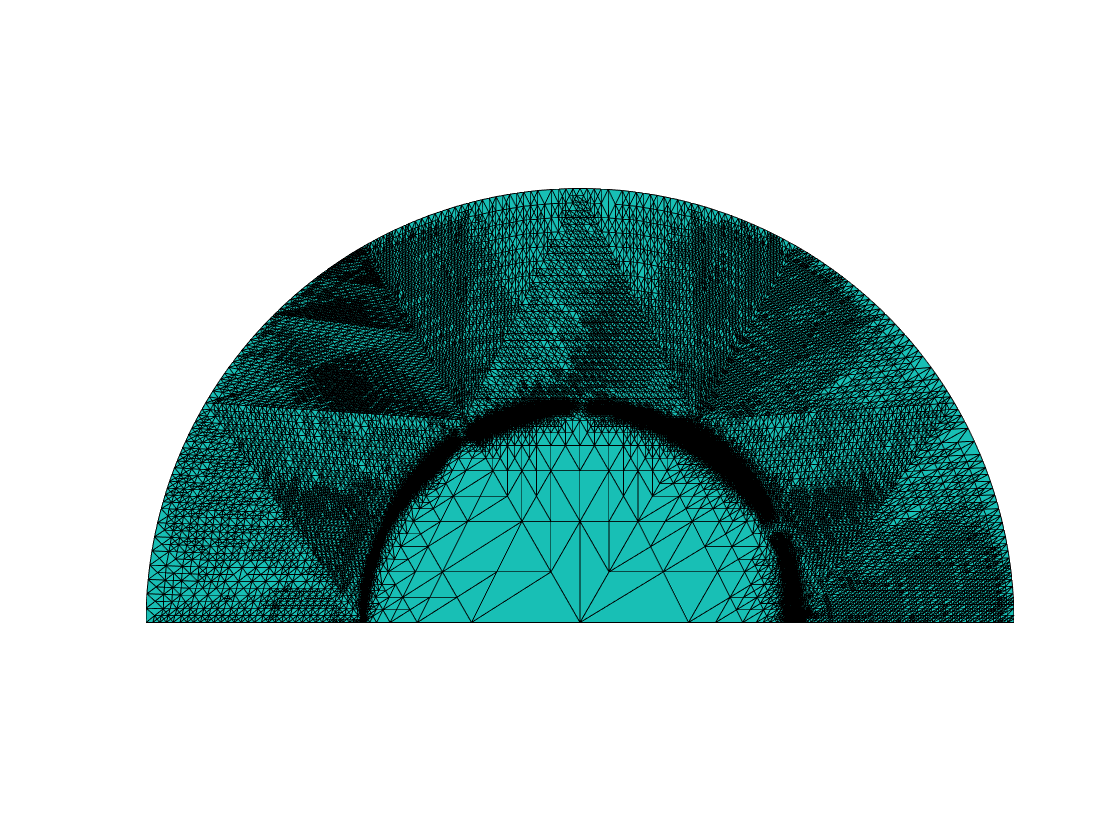}
        \end{minipage}%
        \quad
    \begin{minipage}[!htbp]{0.48\linewidth}
        \centering
        \includegraphics[width=0.99\textwidth,angle=0]{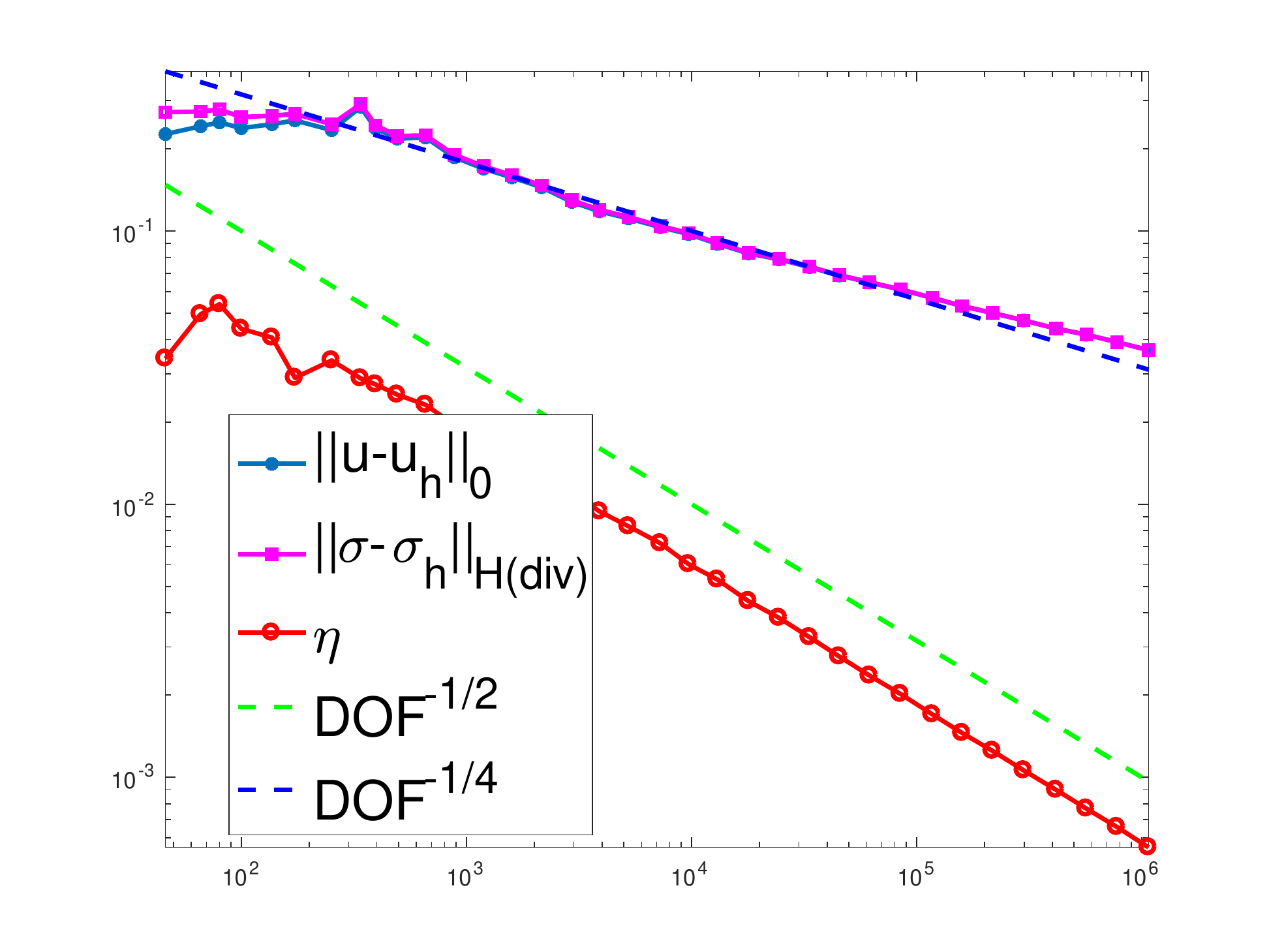}
        \end{minipage}
    \caption{Curved transport problem: adaptively refined meshes after several iterations (left), convergence behaviors on adaptively refined meshes (right)}%
    \label{curved_adaptivemesh}
\end{figure}

On the left of Fig. \ref{curved_os}, we show the reduction of overshooting values obtained by LSFEM1-1. After the initial stages, the overshooting values are decreasing with refinements along the discontinuities. On the right of Fig. \ref{curved_os}, the projected solution on the radius is shown on the final mesh. We can see that the overshooting is very small compared with the uniform refinements. The Gibbs phenomena is not observed.

\begin{figure}[!ht]
    \begin{minipage}[!hbp]{0.48\linewidth}
        \centering
        \includegraphics[width=0.99\textwidth,angle=0]{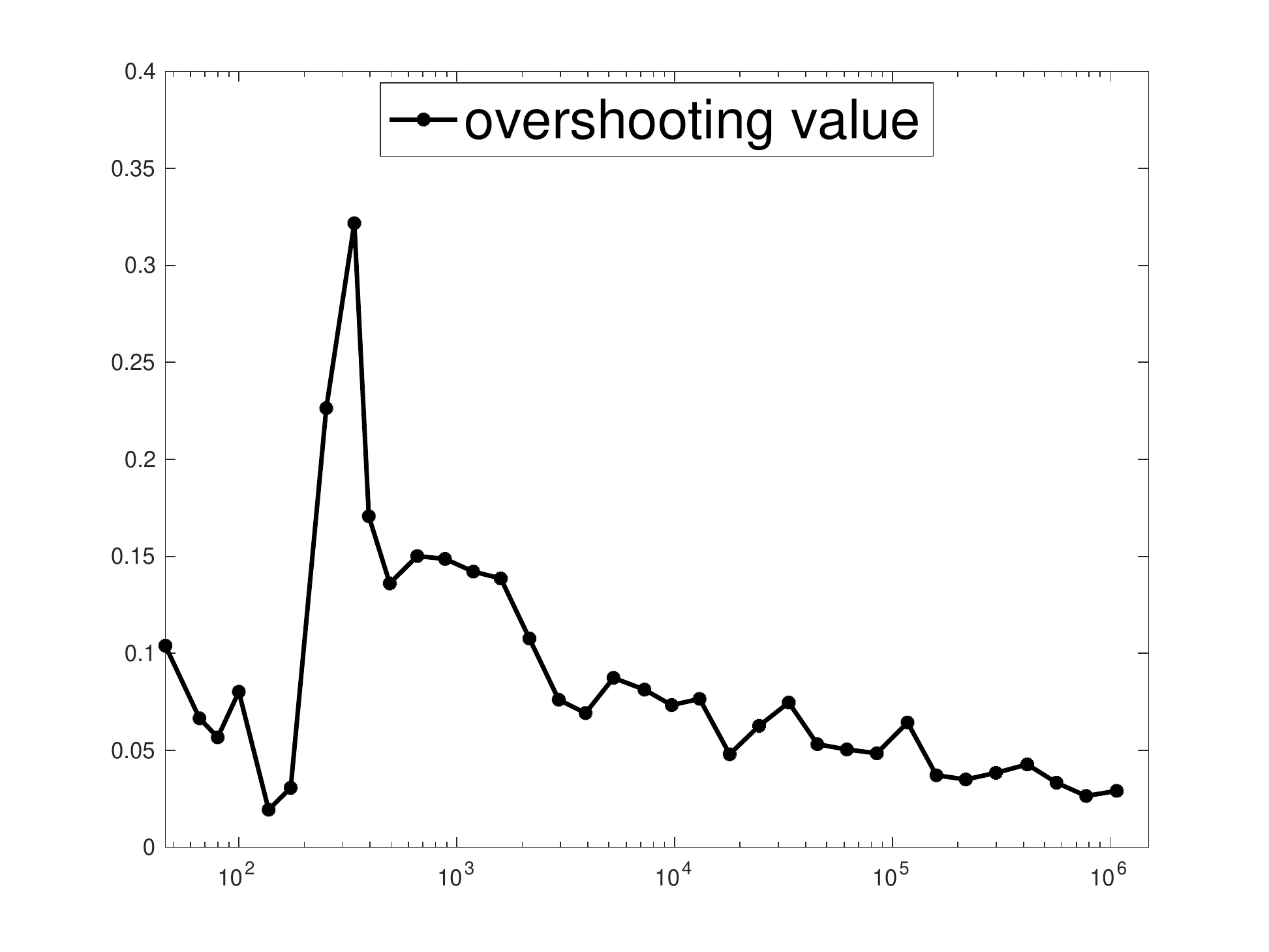}
        \end{minipage}%
        \quad
    \begin{minipage}[!htbp]{0.48\linewidth}
        \centering
        \includegraphics[width=0.99\textwidth,angle=0]{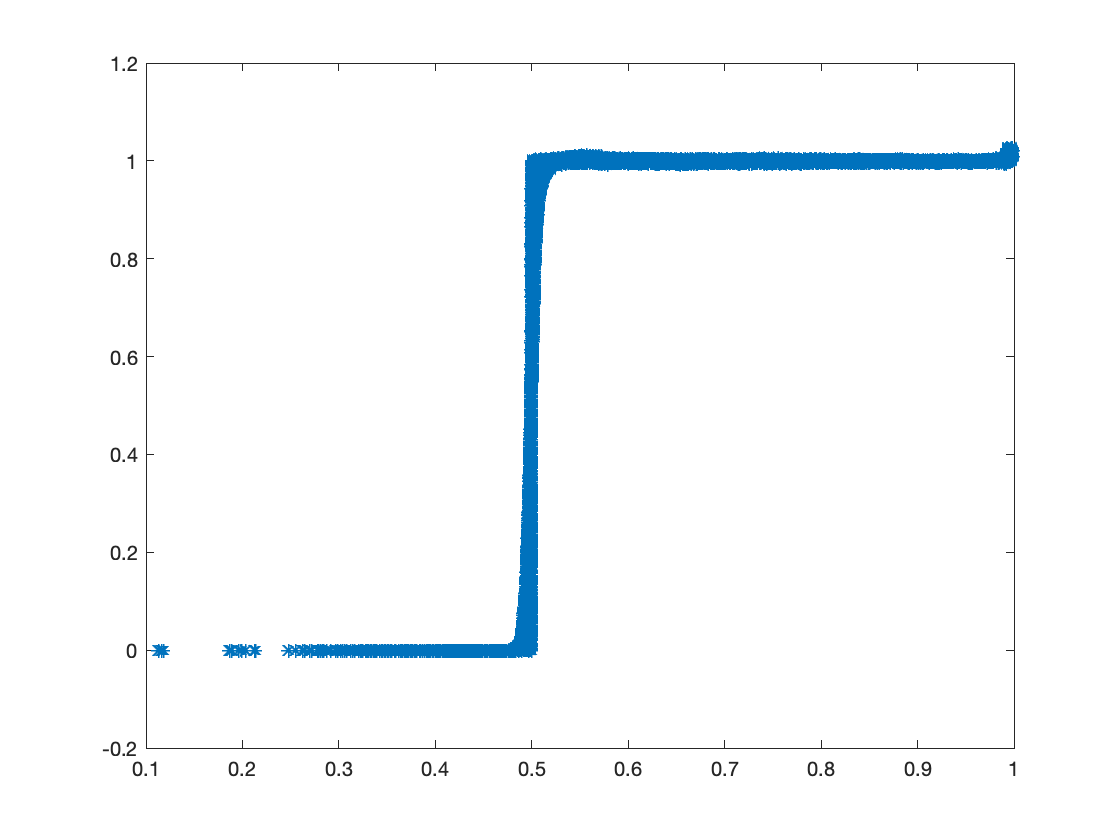}
        \end{minipage}
    \caption{Curved transport problem: reduction of overshootings by adaptive LSFEM (left), projection solution on an adaptively refined mesh (right)}%
    \label{curved_os}
\end{figure}

\subsection{A smooth example with a sharp transient layer}
Consider the following problem:  $\bbeta = (y+1,-x)^T/\sqrt{x^2+(y+1)^2}$, $\O = (0,1)^2$,  $\gamma=0.1$, and $f=0$. The inflow boundary is $\{x=0, y\in (0,1)\} \cup \{x\in (0,1), y=1\}$, i.e., the west and north boundaries of the domain. Choose $g$ such that the exact solution $u$ is
$$
u = \dfrac{1}{4}\exp\left(\gamma r\arcsin \left(\dfrac{y+1}{r}\right)\right)
\arctan \left(\dfrac{r-1.5}{\epsilon}\right), \quad \mbox{with}\quad r = \sqrt{x^2+(y+1)^2}.
$$

When $\epsilon = 0.01$, the layer can be fully resolved. 
When $\epsilon = 10^{-10}$, the layer is never fully resolved in our experiments and can be viewed as discontinuous.

\begin{figure}[!ht]
\centering 
\subfigure[$\epsilon=0.01$]{ 
\includegraphics[width=0.45\linewidth]{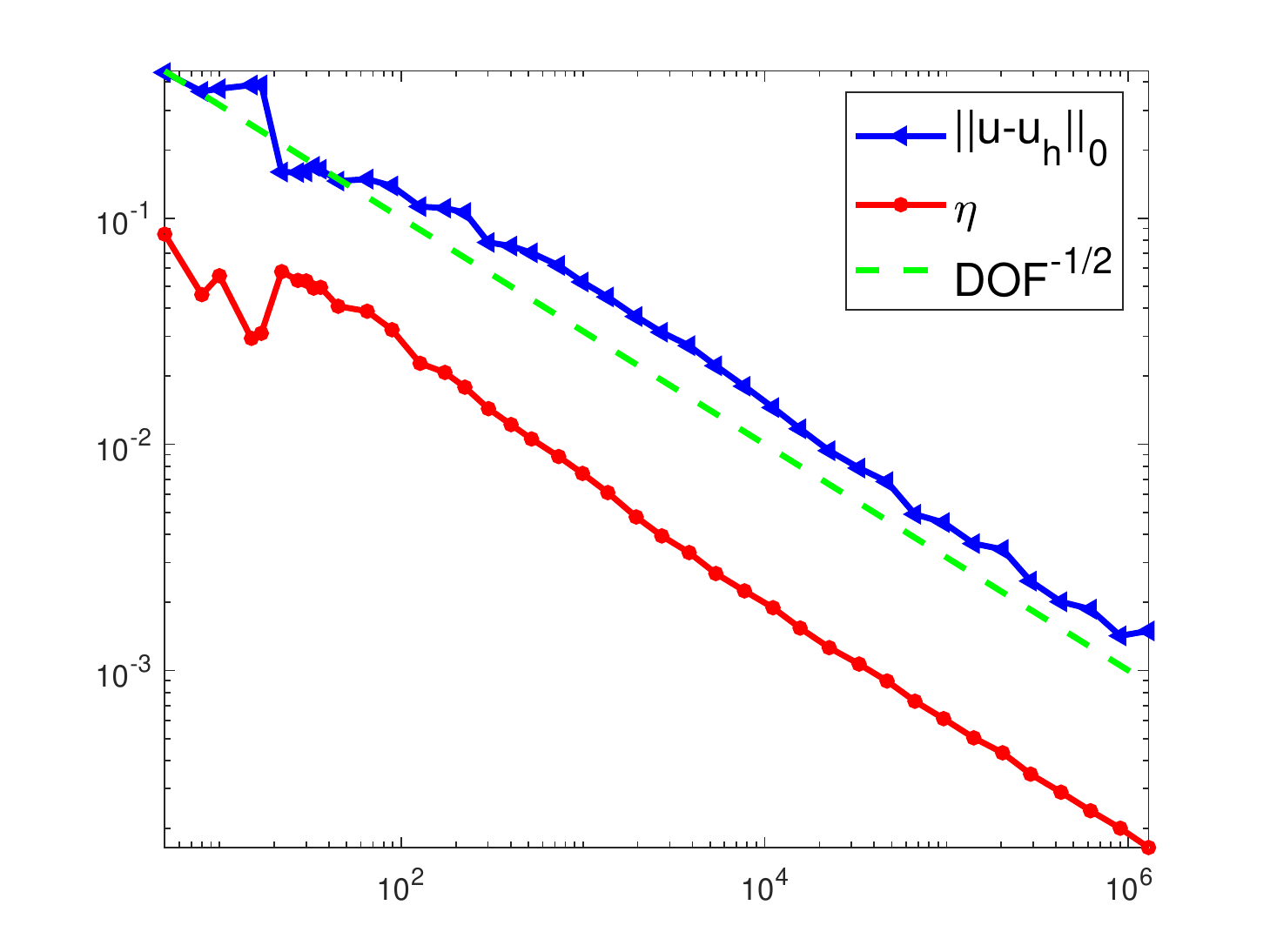}}
\hspace{0.01\linewidth}
\subfigure[$\epsilon=10^{-10}$]{ 
\includegraphics[width=0.45\linewidth]{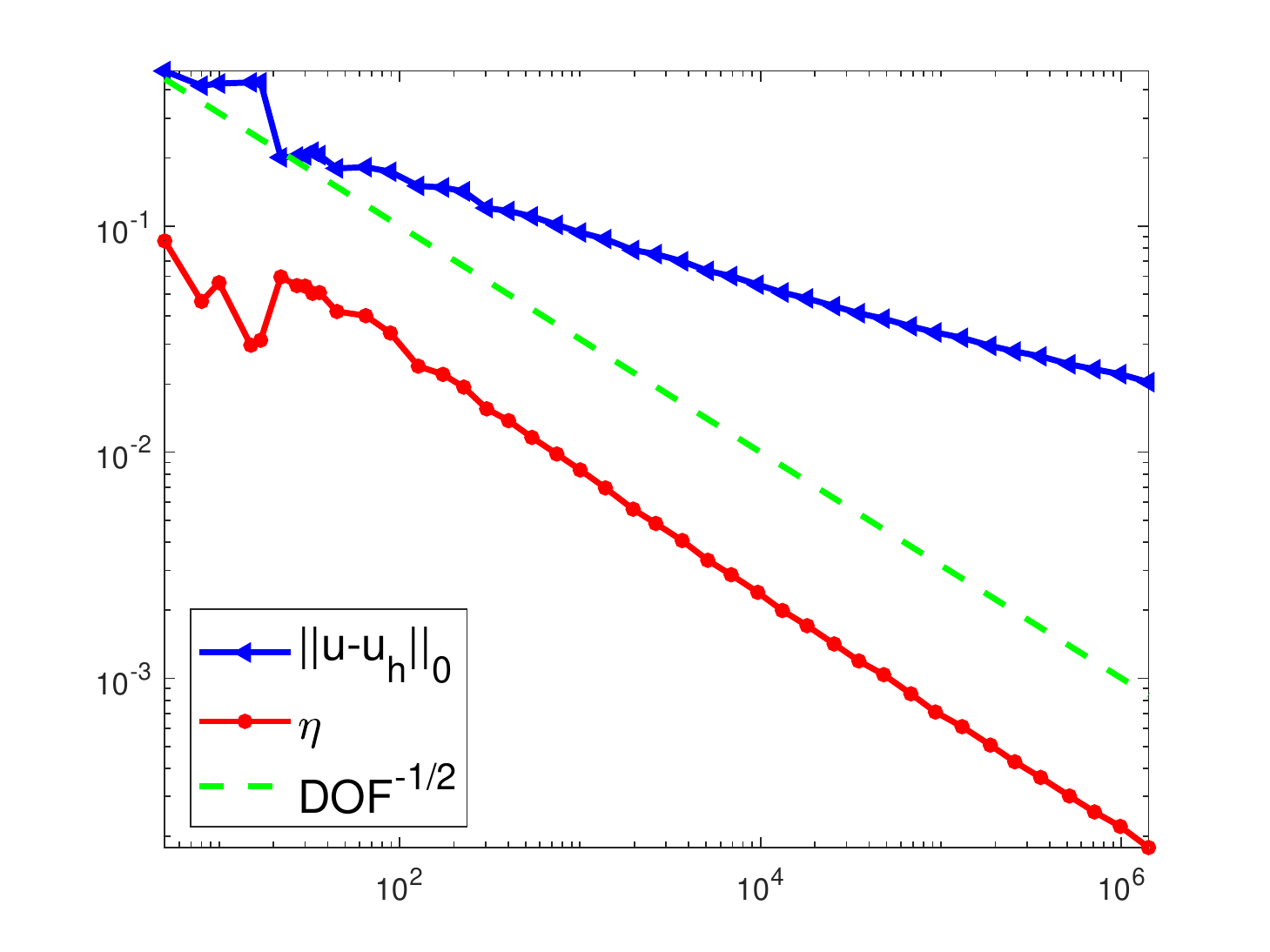}}
\caption{Transient layer problem:  adaptive convergence behaviors}
\label{adaptive_error_1e2}
\end{figure}

\begin{figure}[!htb]
\centering 
\subfigure[adaptive  meshe after several iterations]{ 
\includegraphics[width=0.45\linewidth]{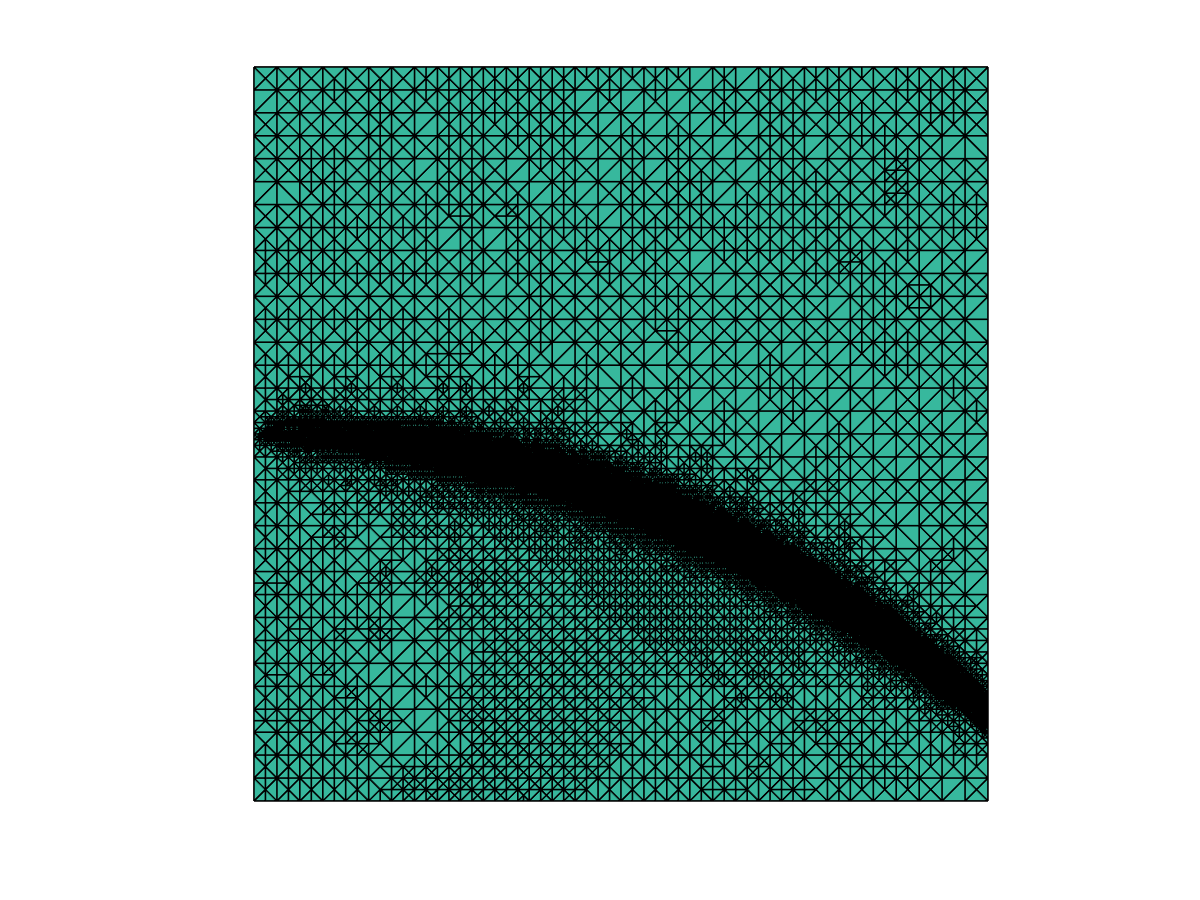}}
\hspace{0.01\linewidth}
\subfigure[contour of the solution]{
\includegraphics[width=0.45\linewidth]{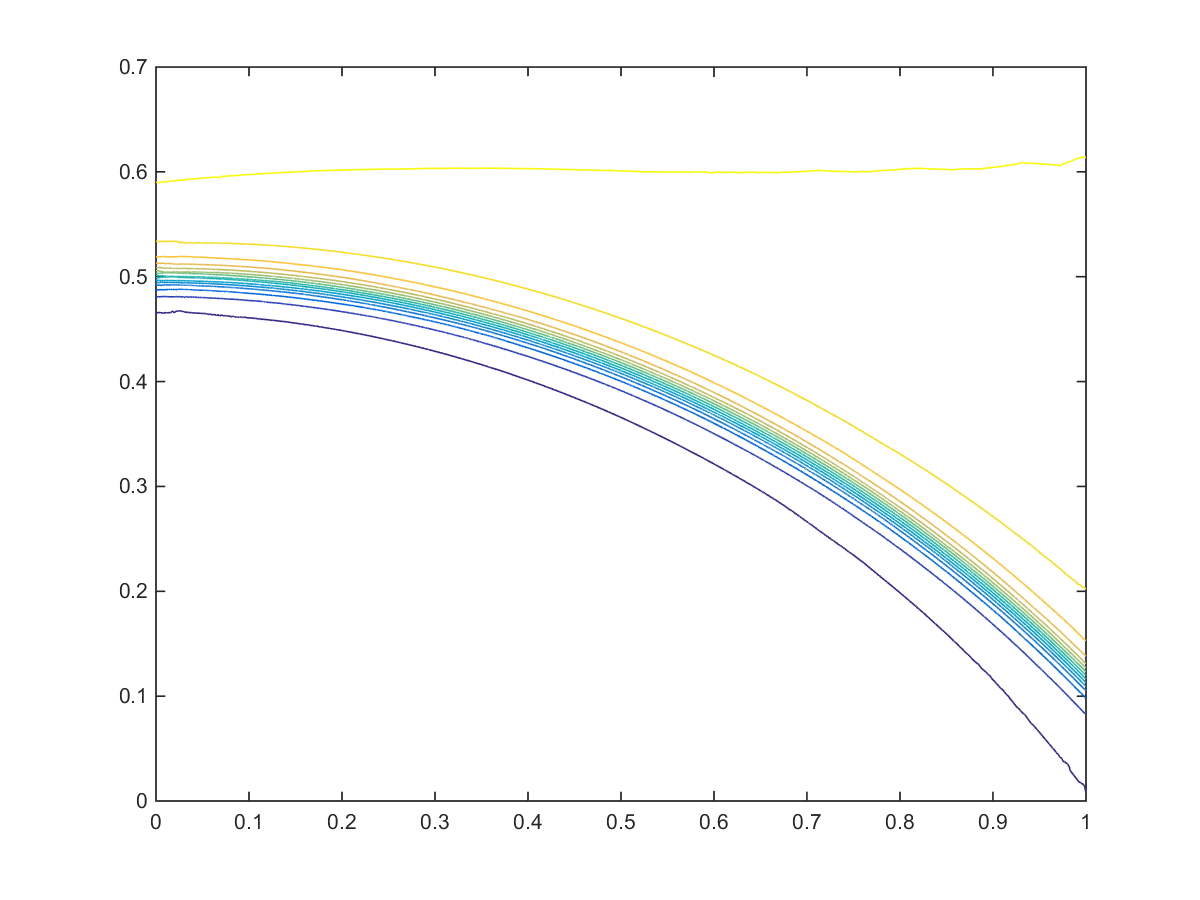}}
\caption{Transient layer problem $\epsilon=0.01$}
\label{adaptive_mesh_1e2}
\end{figure}

For $\epsilon = 0.01$, we show the numerical results in  Figs. \ref{adaptive_mesh_1e2} and \ref{adaptive_error_1e2}. The behaviors of the methods are very similar to the global continuous solution case. For $\epsilon = 10^{-10}$, we show the numerical results in  Figs. \ref{adaptive_error_1e2} and \ref{adaptive_mesh_1e10}. The behaviors of the methods are very similar to the piecewise smooth solution example with a non-matching grid, the example 5.6. The order of convergence of $\|u-u_h\|_0$ is about $0.12$. The contour of the solution on the right of Fig. \ref{adaptive_mesh_1e10} shows that the overshooting is neglectable when the mesh is fine enough.

\begin{figure}[!ht]
\centering 
\subfigure[adaptive mesh after several iterations]{ 
\includegraphics[width=0.45\linewidth]{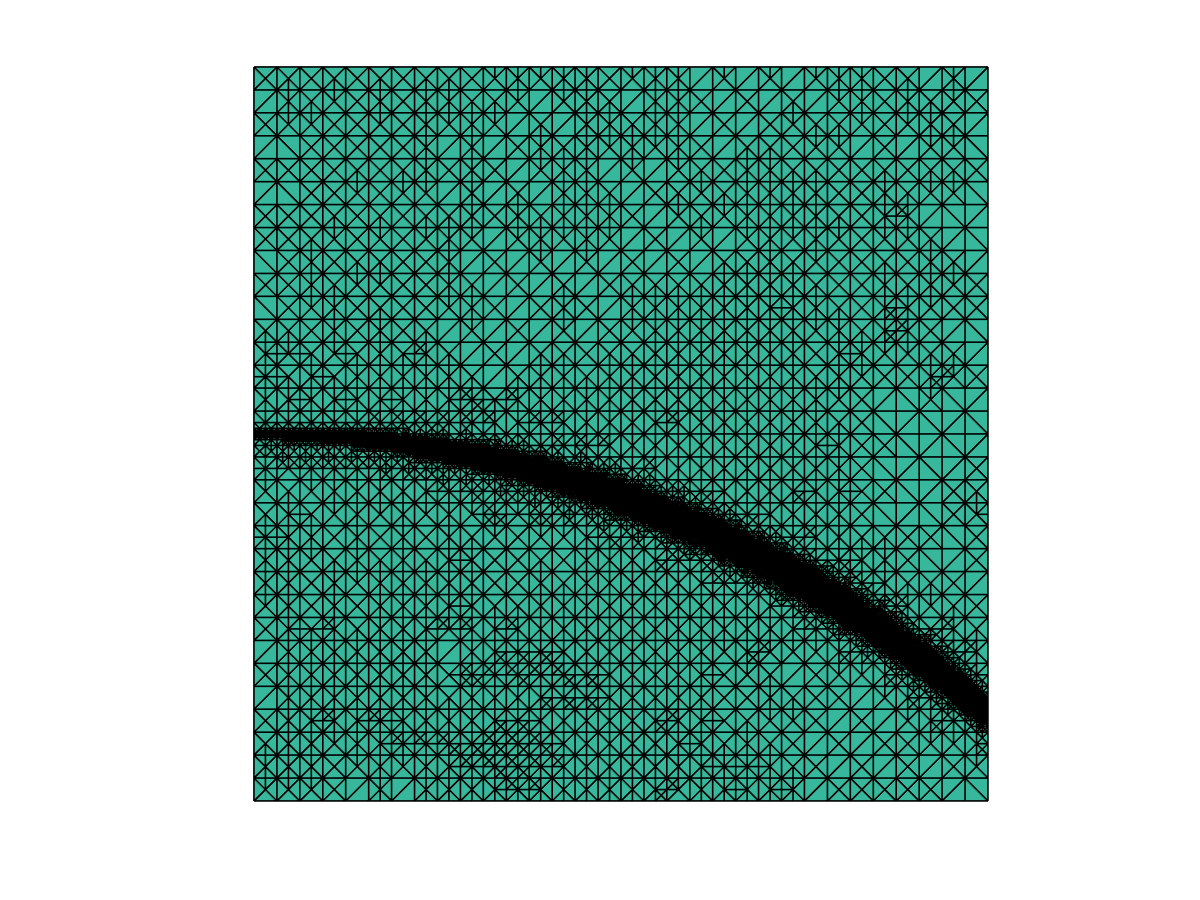}}
\hspace{0.01\linewidth}
\subfigure[contour of solution]{
\includegraphics[width=0.45\linewidth]{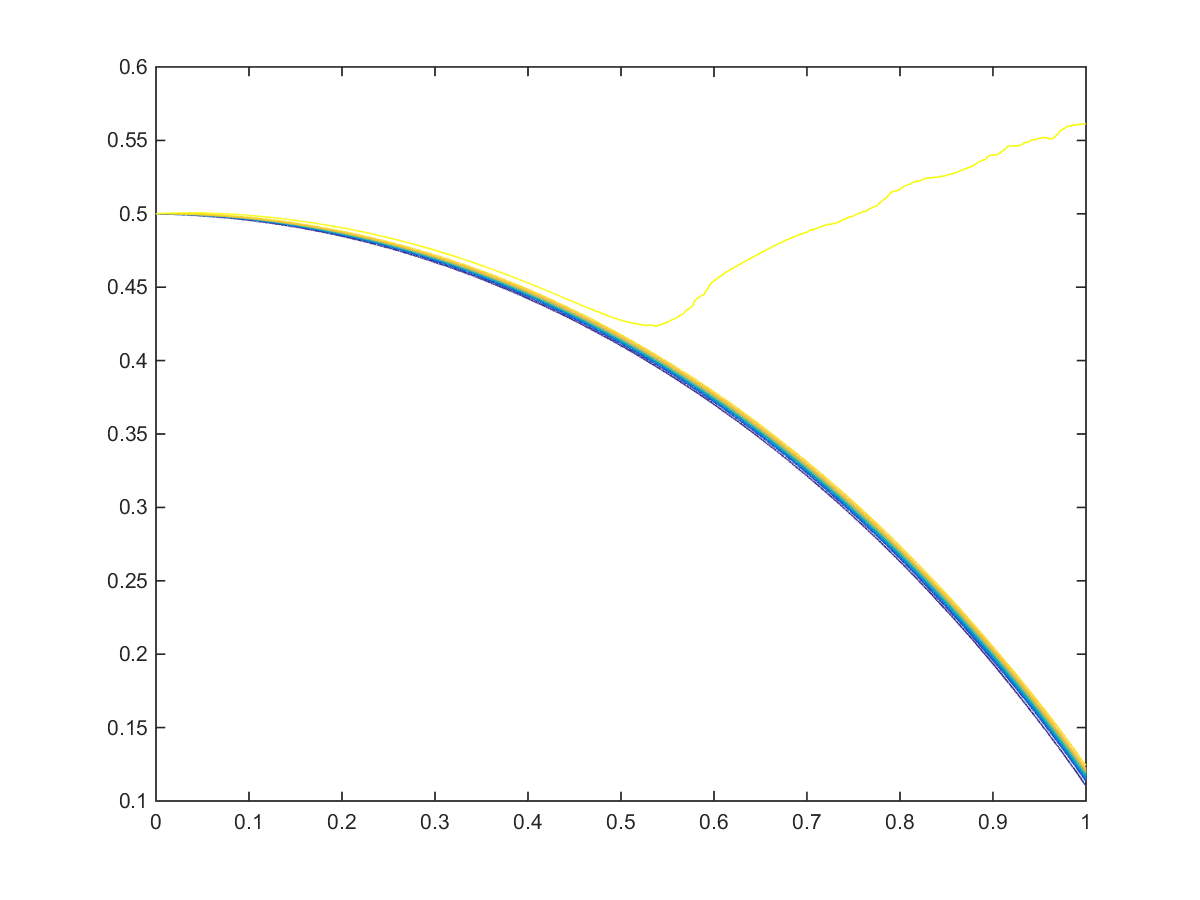}}
\caption{Transient layer problem $\epsilon=10^{-10}$}
\label{adaptive_mesh_1e10}
\end{figure}

\begin{rem}
The numerical tests here show that the convergence rate of the flux-only LSFEMs is the same as the original flux-solution LSFEMs that we suggested in \cite{LZ:18}. 

As mentioned in \cite{LZ:18}, the common rumor that of the least-squares method is it tends to have a strong smearing effect which is actually often the effect of Galerkin least-squares or stabilized methods where some least-squares terms are added to variational problems, see for example \cite{HFH:88}. We do not observe smearing effect for the bona fide least-squares methods developed in this paper.
\end{rem}

\section{Concluding Remarks}
\setcounter{equation}{0}
In this paper, two flux-only least-squares finite element methods (LSFEM) for the linear hyperbolic transport problem are developed. We first reformulate the linear transport equation into a flux-solution system, then eliminate the solution from the system and get two flux-only formulations, and develop corresponding LSFEMs. The solution then is recovered by simple post-processing methods using its relation with the flux. These two versions of flux-only LSFEMs use less DOFs than the method we developed in \cite{LZ:18}.
	
In this paper, we only discuss the versions of LSFEMs with boundary condition strongly enforced. We can also weakly enforce the condition as we did in \cite{LZ:18}.
		
Similar to the LSFEM developed in \cite{LZ:18}, both flux-only LSFEMs can handle discontinuous solutions better than the traditional continuous polynomial approximations. We show the existence, uniqueness, a priori and a posteriori error estimates of the proposed methods.  With adaptive mesh refinements driven by the least-squares a posteriori error estimators, the solution can be accurately approximated even when the mesh is not aligned with discontinuity. The overshooting phenomenon is very mild if a piecewise constant reconstruction of the solution is used. Extensive numerical tests are done to show the effectiveness of the methods developed in the paper.

\section*{Acknowledgement}
S. Zhang is supported in part by Hong Kong Research Grants Council under the GRF Grant Project No. 11305319, CityU and a China Sichuan Provincial Science and Technology Research Grant 2018JY0187 via Chengdu Research Institute of City University of Hong Kong.

\bibliographystyle{siam}
\bibliography{ls_transport}

\end{document}